\newtheorem{lemma}{Lemma}[section]
\newtheorem{theorem}[lemma]{Theorem}
\newtheorem{proposition}[lemma]{Proposition}
\newtheorem{corollary}[lemma]{Corollary}
\theoremstyle{definition}
\newtheorem{definition}[lemma]{Definition}
\newtheorem{example}[lemma]{Example}
\theoremstyle{remark}
\numberwithin{equation}{section}
\newcommand{\comment}[1]{}
\newcommand{\Z}{{\mathbb Z}}
\newcommand{\R}{{\mathbb R}}
\newcommand{\A}{{\mathcal{A}}}
\newcommand{\LL}{{\mathcal{L}}}
\newcommand{\QQ} {{\mathcal{Q}}}
\newcommand{\DD}{{\mathcal{D}}}
\newcommand{\FF}{{\mathcal{F}}}
\newcommand{\supp}{{\mathrm {supp}\,}}
\newcommand{\Deg}{{\mathrm {Deg}}}
\newcommand{\as}[1]{\left\langle #1\right\rangle}
\newcommand{\aV}[1]{\left\Vert #1\right\Vert}
\newcommand{\ow}[1]{\widetilde{ #1}}
\newcommand{\Hm}[1]{\leavevmode{\marginpar{\tiny%
$\hbox to 0mm{\hspace*{-0.5mm}$\leftarrow$\hss}%
\vcenter{\vrule depth 0.1mm height 0.1mm width \the\marginparwidth}%
\hbox to 0mm{\hss$\rightarrow$\hspace*{-0.5mm}}$\\\relax\raggedright
#1}}}
\def\id{\mathrm{id}}
\def\Deg{\mathrm{Deg}}
\renewcommand{\epsilon}{\varepsilon}
\renewcommand{\phi}{\varphi}
\begin{document}

\title[Diffusion determines the graph]{Diffusion determines the  recurrent graph}

\author[Keller]{Matthias Keller} 
\address{Mathematisches Institut  \\Friedrich Schiller Universit{\"a}t Jena \\07743 Jena, Germany } \email{m.keller@uni-jena.de}

\author[Lenz]{Daniel Lenz} \address{Mathematisches Institut \\Friedrich Schiller Universit{\"a}t Jena \\07743 Jena, Germany } \email{daniel.lenz@uni-jena.de}

\author[Schmidt]{Marcel Schmidt}\address{Mathematisches Institut \\Friedrich Schiller Universit{\"a}t Jena \\07743 Jena, Germany } \email{schmidt.marcel@uni-jena.de}

\author[Wirth]{Melchior Wirth}\address{Mathematisches Institut \\Friedrich Schiller Universit{\"a}t Jena \\07743 Jena, Germany } \email{melchior.wirth@uni-jena.de}

\date{\today}

\begin{abstract} We consider  diffusion on discrete measure spaces as  encoded by Markovian semigroups arising from weighted  graphs.  We study whether the graph is uniquely determined if the diffusion is given up to order isomorphism.
If the graph is recurrent then the complete   graph structure  and the measure space are  determined   (up to an overall scaling). As shown by counterexamples this result is optimal. Without the recurrence assumption,  the graph still turns out to be determined in the case of normalized diffusion on graphs with standard weights and in the case of arbitrary graphs over spaces in which each point has the same mass.

These investigations  provide discrete counterparts to studies of diffusion on Euclidean domains and manifolds initiated by Arendt and continued by Arendt / Biegert / ter Elst and Arendt / ter Elst.

A crucial step in our considerations shows that order isomorphisms are actually unitary maps (up to a scaling) in our context.
\end{abstract}

\maketitle

\tableofcontents

\section*{Introduction}
A famous question of M. Kac asks: \textit{Can one hear the shape of a drum?} \cite{Kac}.  There is a substantial amount of research concerning this question  over quite some time. Indeed,  from Milnor's counterexample in 16 dimensions \cite{Mil} over  Sunada's general method \cite{Sun} it took quite a while till  a  counterexample in two dimensions was given by Gordon / Webb /  Wolpert in \cite{GWW}. Of course, the question may also be addressed in a discrete setting, i.e., for graphs. However, there one can easily find counterexamples as has been known for quite some time, see e.g. the textbook \cite{CDS}.

In mathematical terms the question of Kac concerns the eigenvalues of the Laplacian on a surface and whether  a surface is determined by these eigenvalues. Thus, it asks whether  two surfaces with unitarily equivalent Laplacians are naturally congruent. Of course, two Laplacians are unitarily equivalent if and only if their unitary groups are unitarily equivalent. In this sense, the question of Kac deals with unitary groups and unitary equivalence.

Recently, Arendt \cite{Are}   studied  the question:  \textit{Does diffusion determine the body?}  Here, compared to the question of Kac, the unitary group is replaced by the diffusion semigroup and  unitary equivalence  is replaced by equivalence up to order  isomorphism. The question then  asks whether  two relatively compact domains in Euclidean space with  -- up to order isomorphism -- equal semigroups  are naturally congruent.
This question is then answered positively in \cite{Are} (under a weak regularity assumption on the domain). In fact, this can even be extended to  manifolds, as shown by Arendt / Biegert /  ter Elst in  \cite{ABtE}. For compact manifolds an alternative  treatment  was also given by Arendt / ter Elst   \cite{AtE}.

In the present paper we address this issue for diffusion on discrete measure spaces, i.e., for graphs. Our main result gives a positive answer to the question of Arendt in this setting. More specifically, our main result shows that

\begin{itemize}
\item diffusion determines the recurrent graph without killing and this result is optimal in a  certain sense (see Theorem~\ref{main_forms} and subsequent discussion).
\end{itemize}

When the underlying space is finite (corresponding to the compactness condition in \cite{Are,AtE}) the recurrence assumption is trivially satisfied and we obtain  an  analogue to the results of \cite{Are,AtE}.
In fact, this  can even be generalized to infinite graphs with finite  total edge weight. This is studied in Section~\ref{Normalized}. In particular,  we obtain  that

\begin{itemize}
\item diffusion determines  the  infinite graph provided the total edge weight is finite, Corollary~\ref{normalized-diffusion}.
\end{itemize}

In all these situations diffusion determines the graph in the sense
that it determines both the weights of the graphs and the measure on
the underlying space. Let us emphasize that the weights and the
measure are independent pieces of data. This is quite a difference
to the framework  of  \cite{Are,ABtE,AtE}, where  the measure is
canonically fixed by  the manifold structure.

Two most relevant situations in the graph setting actually come
with a  canonical measure. In these situations diffusion determines
the graph without any recurrence or compactness assumption.
Specifically, diffusion determines the graph

\begin{itemize}

\item  if the measure of each point is one, Theorem~\ref{theorem-m-equal-one}, or,

\item if the graph has standard weights (i.e., the edge weights take value in $\{0,1\}$) and the Laplacian in question is the normalized Laplacian, Theorem~\ref{unweighted}.
\end{itemize}
These results  results can be  considered as analogues to the
results of \cite{ABtE}.

Let us stress  that these are particularly important situations.
Indeed,  for a long time most studies  of Laplacians on graphs were
concerned with these situations.   In fact, it seems that the study
of more general situations  (as treated in the results  above) has
only become a focus of  attention during the last five years or so.
In this connection, we also point out some recent work on normalized
Laplacians such as \cite{BanJ,BHJ,BJ,BJL,HuJ}. We also take this
opportunity to mention the recent work \cite{HJ},  which introduces
and studies  discrete Laplacians in rather general geometric
situations.

\smallskip

Along our way we also show that, without any recurrence or compactness/ finiteness  assumption, the diffusion always determines

\begin{itemize}

\item the combinatorial structure of the graph and the combinatorial distance (Theorem~\ref{main_weak_form}),
    \item the intrinsic pseudo metric $\varrho$ introduced by Huang in \cite{Hua} (Theorem~\ref{rho-invariant}).
\end{itemize}
\medskip

While, obviously, our setting is quite different from \cite{Are,ABtE,AtE}, in terms of overall strategy our considerations certainly owe to \cite{Are,ABtE,AtE}. In this context  it seems worth pointing out some differences to these works.
  As for the results  one main difference was already mentioned above: In our situation the  weights and the  measure on the underlying space  are independent pieces of data (whereas the measure in the considerations of \cite{Are,ABtE,AtE} is actually canonically given). As for the method, we do  get some   additional insights  on the structure of intertwining order isomorphisms in our situation:
\begin{itemize}
\item Any intertwining order isomorphism is (up to a scaling) actually a unitary map (Corollary~\ref{U unitary}).
\item In the case of regular diffusions the  intertwining order isomorphisms are in one-to-one correspondence with generalized ground state transforms (Theorem~\ref{Gleichung} and Theorem~\ref{theorem_gst}).
\end{itemize}
The first point is rather remarkable as  the categories of unitary
maps and of order isomorphism are rather different in general. It
can be seen as the main insight in the present paper. Indeed, in one
way or other it is the crucial ingredient in the proofs of our
theorems. On a more abstract level it can  be understood as saying
that existence of an order isomorphism is indeed a stronger
requirement than existence of a unitary isomorphism and this
  can be seen in the context of explaining that order isomorphisms determine the graph while unitary isomorphisms do not.

\medskip

In order to provide the proper setting for our results we first
develop the theory of diffusion on discrete sets  in quite some
depth in Section~\ref{Graphs}. There, we basically follow
\cite{KL1,KL2} (see \cite{HK,HKLW} as well). These works provide a
framework for diffusion on discrete sets in terms of Dirichlet forms
and graphs.

We then turn to studying how the graph structure is determined via
diffusion. This is phrased in terms of order preserving maps
intertwining the corresponding semigroups. In this context we first
provide a general structure theorem on  order preserving maps
between $\ell^p$ spaces (Theorem~\ref{orderiso}) in
Section~\ref{Structure-oi}.  This result gives that any order
isomorphism arises as a composition of a bijection $\tau$  and a
scaling $h$. This  is a variant of a Lamberti type theorem. It  may
be of independent interest.
In Section~\ref{Structure}, we then proceed  to  study
those order preserving  isomorphisms which intertwine Laplacians on
graphs. Here, we obtain  a  formula relating the weights of the
different graphs in questions via the measures and the scaling $h$
and the bijection $\tau$ coming from the order isomorphism
(Theorem~\ref{Gleichung}) and  derive as a consequence  that  the
scaling $h$ is (super)harmonic (Corollary~\ref{h_harmonisch}). On
the technical level, these two results  are the main tools of the
paper. They are intimately linked to  the observation in
Corollary~\ref{U unitary}  that an intertwining order isomorphism is
unitary (up to a scaling).

The connection to generalized ground state transforms and
Theorem~\ref{theorem_gst} are then the content of
Section~\ref{Ground state section}.

As a first application, we can then show in Section~\ref{map} how
diffusion determines the graph structure and the pseudo metric
$\varrho$.

A final  ingredient in our reasoning is recurrence. This concept is
discussed  in Section~\ref{Recurrent}. The results of that section
are  well-known. We mainly include the discussion in order to be
self-contained.

Our main result, Theorem~\ref{main_forms} is then stated and proven
in Section~\ref{Diffusion}. The discussion following that theorem
shows the mentioned optimality of this result.

The particular case where the total edge weight  is finite is
considered in  Section~\ref{Normalized}. In Section~\ref{Discrete}
we consider two situation of special interest, viz the case where
the measure is normalized to be equal to one at each point and the
situation where the measure is given by the degree and the weights
take values in $\{0,1\}$. In these cases, the graph turns out to be
completely determined  even without a recurrence assumption.

\smallskip

The theory of local Dirichlet spaces, i.e., topological spaces
together with a strongly local Dirichlet form has been very
successful in generalizing the situation of Riemannian manifolds. We
consider our paper as a contribution to the emerging theory of
discrete Dirichlet spaces, i.e., discrete spaces together with a
Dirichlet form. For this reason we focus our attention on Dirichlet
forms on graphs and  the associated  semigroups in continuous time.
However, for graphs also  semigroups in discrete time  have been
considered. It turns out that essential part of our results remain
valid there as well. This is discussed in Section
\ref{Remark-discrete-time}. We thank the anonymous referee for
bringing up this point.

\smallskip

In order to make the paper self-contained we include  an  appendix
on the intertwining property.

\smallskip

Part of the material presented in this paper is based on the
Bachelor thesis of one of the authors (M. W.).

\medskip

\textbf{Acknowledgements.}  Partial support  by the German research foundation (DFG) is gratefully acknowledged.

\section{Graphs and diffusion on discrete measure spaces}\label{Graphs}
  This section deals with diffusion on discrete sets.  Here, diffusion is modeled by a Markovian semigroup.  Such  semigroups  are in one-to-one correspondence to  Dirichlet forms on the underlying space. Each such Dirichlet form gives naturally rise to a graph structure. In fact, regular Dirichlet forms are even in  one-to-one correspondence to graphs. Details are discussed below in this section.  More specifically, we first introduce the necessary background on graphs, then turn to associated Dirichlet forms and afterwards discuss the  associated Markovian semigroups. The  main thrust of the paper is to study whether  Markovian semigroups which are equivalent up to order isomorphism lead to the same  graph structure. Along our way, we will also investigate certain  metric features of the underlying graphs. The necessary background on metrics on graphs is discussed at the end of this section.
\medskip

Throughout, we let $X$ be a finite or  infinitely countable set. All functions on $X$ will be real valued.
\medskip

\textbf{Graphs.}  
A pair $(b,c)$ is called a
 \emph{graph} over $X$ if  $b:X\times X \to [0,\infty)$ is symmetric, has zero diagonal, and satisfies
\begin{align*}
    \sum_{y\in X}b(x,y)<\infty
\end{align*}
for all $x\in X$ and $c:X\to[0,\infty)$ is arbitrary. The elements of  $X$ are then referred to as \emph{vertices} and the function $b$ is called the  \emph{edge weight} and $c$ the \emph{killing term}. Elements  $x,y\in X$ are called \emph{neighbors} and said to be  \emph{connected by an edge of weight $b(x,y)$}, if $b(x,y)>0$. 
If the number of neighbors of each vertex is finite, then we call $(b,c)$ or $b$ \emph{locally finite}. A finite sequence $(x_{0},\ldots,x_{n})$ of pairwise distinct vertices such that $b(x_{i-1},x_{i})>0$ for $i=1,\ldots,n$ is called a \emph{path} from $x_0$ to $x_n$.
We say that the graph  $(b,c)$  is \emph{connected} if, for every two vertices $x,y\in X$, there is a path from $x$ to $y$.
\medskip


\textbf{Formal Laplacian and generalized forms.}  The space of real valued functions on $X$ is denoted by $C(X)$ and the subspace of functions with finite support is denoted by  $C_{c}(X)$.   Throughout the characteristic function of  a point $x\in X$ will be denoted as $1_x$, i.e.,
 $$\mbox{$1_x (y) = 1$ for $ x =y$ and $1_x (y) = 0$ otherwise.}$$

Given a graph $(b,c)$ over  $X$ we introduce the associated
\emph{formal Laplacian} $\LL =\LL_{b,c}$  acting on
\begin{align*}
\FF=\FF_{b,c}=\{f\in C(X) : \sum_{y\in X}b(x,y)|f(y)|<\infty\mbox{
for all }x\in X\}
\end{align*}
as
\begin{align*}
\LL f (x) = \sum_{y\in X}b(x,y)(f(x)-f(y))+ c(x ) f(x).
\end{align*}
The operator $\LL$  can be seen as a discrete analogue of  the Laplace Beltrami operator on a Riemannian manifold (and an additional potential).


Given a graph $(b,c)$ over  $X$, we furthermore  define the
\emph{generalized form} $\QQ = \QQ_{b,c}:
C(X)\to[0,\infty]$ by
\begin{align*}
\QQ(f):=\frac{1}{2}\sum_{x,y\in X}b(x,y)|f(x)-f(y)|^{2}+\sum_{x\in X}c(x)|f(x)|^{2}
\end{align*}
and  the \emph{generalized form domain} by
\begin{align*}
    \DD = \DD_{b,c} :=\{f\in C(X) :  \QQ(f)<\infty\}.
\end{align*}
Clearly,  $C_{c}(X)\subseteq \DD$ as $b(x,\cdot)$ is summable for every $x\in X$.

Since $\QQ^{1/2}$ is a seminorm and satisfies the parallelogram identity
\begin{align*}
\QQ(f+g)+\QQ(f-g)=2(\QQ(f)+\QQ(g)),\quad f,g\in\DD,
\end{align*}
it gives, by polarization, a semi scalar product on $\DD$ via
\begin{align*}
 \QQ(f,g)=\frac{1}{2}\sum_{x,y\in X} b(x,y) {(f(x)-f(y))}(g(x)-g(y))+\sum_{x\in X}c(x) {f(x)}g(x).
\end{align*}
In the case when $c\not\equiv 0$ and $b$ is  connected, the form $\QQ$ defines a scalar product.

Obviously, $\QQ$ is \textit{compatible with normal contractions} in the sense that
$$\QQ(C f)\leq \QQ (f)$$
holds for any $f\in C(X)$ and any normal contraction $C : \R \to \R$. (Here,  $C : \R \to \R $ is a \textit{normal contraction} if both $|C (p) |\leq |p|$ and $|C(p) - C(q)| \leq |p - q|$ hold for all $p,q\in \R$.)

In \cite{HK,HKLW} an `integration by parts'  was shown that allows one to pair functions in $\FF$ and $C_{c}(X)$ via $\QQ$. More precisely, the considerations of \cite{HK,HKLW} give  for $f\in \FF$ and $v\in C_{c}(X)$
\begin{align*}
 \frac{1}{2} \sum_{x,y\in X} & b(x,y) {(f(x)-f(y))}(v(x)-v(y))+\sum_{x\in X}c(x) {f(x)}v(x)\\
&=\sum_{x\in X} {f(x)}(\LL v)(x)\\
&= \sum_{x\in X} {(\LL  f)(x)} v(x),
\end{align*}
where all  sums converge absolutely. Moreover, it is shown there that  $\DD$ is a subset of $\FF$ and that for $f \in \DD$ the preceding sums all agree with $\QQ (f,g)$.  These formulae will be referred to as \textit{Greens formulae}.
\medskip

\textbf{Dirichlet forms and their  generators.}
We now assume that we are additionally given a measure $m$ on $X$ of full support.  Then, suitable restrictions of $\QQ$ are in correspondence with certain self-adjoint restrictions of the formal operator  $\ow L$ (defined below) on the Hilbert space $\ell^2 (X,m)$
of real valued  square summable functions equipped with the scalar product
\begin{align*}
   \as{u,v} =  \as{u,v}_m=\sum_{x\in X} {u(x)}v(x) m(x),
\end{align*}
and norm $\|u\|=\|u\|_m=\sqrt{\as{u,u}}$. This is discussed next:

Let $(b,c)$ be  graph over $X$ and let $\QQ = \QQ_{b,c}$ be the
associated form on the domain $\DD = \DD_{b,c}$ as discussed above.
Let now   $Q$ be a closed non-negative form on $\ell^2 (X,m)$, whose
domain $D(Q)$ satisfies $C_{c}(X)\subseteq D (Q) \subseteq \DD\cap
\ell^{2}(X,m)$ and which satisfies
\begin{align*}
Q(u,v)=\QQ(u,v),
\end{align*}
for $u,v\in D (Q)$. Thus, $Q$ is essentially a restriction of $\QQ$.
For such a form we define $Q(u):=Q(u,u)$ for $u\in D (Q)$ and
$Q(u):=\infty$ for $u\not\in D (Q)$. Such a form will be referred to
as a \textit{form associated to the graph $(b,c)$ over $(X,m)$}.  If
such a form is a Dirichlet form (i.e., satisfies $Q (C f) \leq Q (f)$
for all  $f\in D(Q))$ it will be called a \textit{Dirichlet form
associated to the graph $(b,c)$}.

By general results on closed forms, any  $Q$ associated to a graph comes with a unique self-adjoint operator $L$ such that $D (Q)$ is just the domain of definition of $L^{1/2}$ and $Q (f,g) = \langle L^{1/2} f, L^{1/2} g\rangle$ holds for all $f,g\in D(Q)$. The operator $L$ is known as the \textit{generator of the form $Q$}.
We will also  refer to  it as \textit{Laplacian corresponding to the form $Q$}.

By \cite[Proposition~3.3]{HKLW} we have that the Laplacian  $L$ corresponding to a form  $Q$ associated to a graph  satisfies
\begin{align*}
Lu=\ow Lu,
\end{align*}
for all $u\in D(L)$. Here, $\ow L$ is just the operator
 $ \frac{1}{m}  \LL$ acting as
\begin{align*}
\ow L f(x)=\frac{1}{m(x)}\sum_{y\in X}b(x,y)(f(x)-f(y))+\frac{c(x)}{m(x)}f(x).
\end{align*}


There are two natural examples, $Q^{(N)}$ and $Q^{(D)}$, which we refer to as the \emph{ forms with Neumann} and \emph{Dirichlet boundary conditions}, respectively.
The form $Q^{(N)}$ has the domain
\begin{align*}
D(Q^{(N)})=\DD\cap \ell^{2}(X,m)=\{u\in\ell^{2}(X,m) :  \QQ(u)<\infty\}.
\end{align*}
The form $Q^{(D)}$ has the domain
\begin{align*}
D(Q^{(D)})=\overline{C_{c}(X)}^{\aV{\cdot}_{ \QQ}},
\end{align*}
where
\begin{align*}
\aV{u}_{\QQ}:={(\QQ(u)+\aV{u}^{2})}^{\frac{1}{2}}.
\end{align*}
We denote the corresponding self adjoint operators by $L^{(N)}$ and $L^{(D)}$.

 By construction, both $Q^{(N)}$ and $Q^{(D)}$ are Dirichlet forms as $\QQ$ is  compatible with normal contractions. In fact, this is clear for $Q^{(N)}$ and follows for $Q^{(D)}$ by a result of Fukushima (see  \cite{KL1} for discussion).  Moreover, $Q^{(D)}$ is regular, i.e., $D(Q^{(D)})\cap C_{c}(X)$ is dense in $D(Q^{(D)})$ with respect to ${\aV{\cdot}}_{Q}$ and in $C_{c}(X)$ with respect to $\aV{\cdot}_{\infty}$.

The forms $Q^{(D)}$ and $Q^{(N)}$ are of particular relevance in the theory of forms associated to graphs  in the  sense that  a  symmetric, closed quadratic form $ Q$ is associated to the graph if and only if $Q^{(D)}\subseteq Q\subseteq Q^{(N)}$, see \cite{GHKLW}.
Here, we write $Q_{1}\subseteq Q_{2}$ if $D(Q_{1})\subseteq D(Q_{2})$ and $Q_{1}(u)=Q_{2}(u)$ for $u\in D(Q_{1})$.
\medskip

\textbf{Markovian semigroups.}
We will be interested in the semigroups associated to graphs. Here, we shortly sketch the necessary ingredients. For further discussion and proofs we refer to
standard references like \cite{FOT,Nag}.
\medskip

Any form $Q$ associated to a graph $(b,c)$ over $(X,m)$ is obviously  non-negative. Thus, the corresponding Laplacian $L$ gives rise to a semigroup $e^{-t L}$, $t\geq 0$. This semigroup provides the solution for the associated heat  equation as follows.   For any $u\in D(L)$ the function
$$\psi : [0,\infty)\to \ell^2(X,m),\,t\mapsto e^{-t L} u,$$ is a solution of the heat equation with initial value $u$
$$ (HE) \hspace{2cm}
\frac {d}{dt}\psi_t = \frac{1}{m} \LL  \psi_t, \;t>0\qquad
\psi_0 =u.$$

In fact, it is possible to describe the form $Q$  in terms of the associated semigroup as follows:

\begin{align*}
D(Q)&=\{f\in \ell^2 (X,m) :  \lim_{t\downarrow 0} \frac{1}{t} \langle f-e^{-tL}f,f\rangle <\infty\}\\
Q(f,g)&=\lim_{t\downarrow 0} \frac{1}{t} \langle f-e^{-tL}f,g\rangle.
\end{align*}
We will be interested in semigroups $e^{-t L}$, $t\geq 0$, which are
 \textit{Markovian}, i.e., satisfy   $ 0\leq e^{-t L} f\leq 1$ for all $t\geq 0$ whenever $f\in \ell^2(X,m)$ satisfies $0\leq f\leq 1$. Indeed, Markovian semigroups are the natural candidates for modeling diffusion processes.
By the second Beurling Deny criteria  the semigroup is Markovian if and only if the form $Q$ is a Dirichlet form.

\smallskip

If $e^{-t L}$ is a Markovian semigroup, then it leaves the space  $\ell^2 (X,m)\cap \ell^p (X,m)$  invariant and its restriction to this space can be extended uniquely to a strongly continuous  semigroup on $\ell^p (X,m)$ for any $p\in [1,\infty)$. The generator of this semigroup will be denoted by $L^{(p)}$ and the  semigroup will be denoted by $e^{-t L^{(p)}}$. It provides a  solution of the heat equation $(HE)$ with initial data $u\in \ell^p (X,m)$.

\smallskip

 The preceding discussion shows that graphs and the associated Dirichlet forms naturally encode diffusion on discrete sets.
\medskip

\textbf{Intrinsic metrics.} We finish this section with a discussion of  certain metrics on discrete sets arising from  graphs.  These metrics will turn out to be  invariant under the  order isomorphism considered later. Recall that a  \textit{pseudo metric} on the set $X$ satisfies all properties of a metric except that it may be degenerate (i.e., vanish outside the diagonal).  Subsequently we will consider metrics only for connected graphs. However,  the case of general graphs could easily be included e.g. by restricting attention to connected components and declaring the distance between different components to be infinity.
\medskip

\begin{definition}[Combinatorial metric]
Let  $(b,c)$ be a connected  graph  over $X$. Then, the metric $d$ defined  by
$$d(x,y)=\inf_\gamma \sharp \gamma,$$
 where the infimum is taken over all paths $\gamma=(x_0,\dots,x_n)$ connecting $x$ and $y$ and $\sharp \gamma =n$ is the number of edges in $\gamma$,  is called \emph{combinatorial graph metric}.
\end{definition}

The following definition of intrinsic metrics gives a class of (pseudo) metrics of graphs that allow one to study spectral geometry on graph in a similar way as on Riemannian manifolds. Such metrics were only  very recently brought forward and studied systematically for  arbitrary regular Dirichlet forms  in \cite{FLW} (see \cite{Uem,Fol} for related material as well). They have  already proven quite useful in the study of graphs \cite{BHK,BKW,GHKLW,GHM,HuK,Hua,Hua2,HKW,HKMW,KS}. While \cite{FLW} deals with general regular Dirichlet forms, here we only present the definition for  graphs.

\begin{definition}[Intrinsic metric for graphs \cite{FLW}]
Let $(b,c)$ be a graph over $(X,m)$. Then,  a pseudo metric $\delta$ on $X$ is said to be \emph{intrinsic} if
\begin{align*}
\sum_{y\in X}b(x,y) \delta(x,y)^2\leq m(x),\qquad x\in X
\end{align*}
\end{definition}

\textbf{Remarks.} A few remarks on this definition are in order. To simplify the discussion we only consider  the case $c=0$.

(a)  We  can write
\begin{eqnarray*}
\QQ (f,f) &= & \frac{1}{2} \sum_{x\in X}  \left(\frac{1}{ m(x)} \sum_{y\in X} b(x,y) |f(x) - f(y)|^2\right) m(x)\\
& =& \frac{1}{2} \sum_{x\in X} m(x) \|\nabla f (x)\|^2
\end{eqnarray*}
with
$$\|\nabla f(x)\| = \sqrt{\frac{1}{ m(x)} \sum_{y\in X} b(x,y) |f(x) - f(y)|^2.}$$
This suggests to single  out the set of functions $\mathcal{A}$ defined via $$\mathcal{A} :=\{f\in C(X) :  \|\nabla f(x)\| \leq 1 \:\mbox{for all $x\in X$} \}.$$
Then, a metric $\delta$ can easily be seen to be  intrinsic if and only if
$$\mbox{Lip}_\delta^1 \subset \mathcal{A},$$
where $\mbox{Lip}_\delta^1$ is the set of Lipschitz functions with constant one with respect to $\delta$.

\smallskip

(b) The set $\mathcal{A}$ is in general not closed under taking suprema: Consider  e.g. the subgraph of  $\Z$ given by  the integers from $-1$ to $1$, i.e.,
$$X=\{-1,0,1\},\quad m\equiv 1$$
 and
$$b(-1,0) = b(0,-1) = b(1,0) = b(0,1) = 1, \;\: b(1,-1) = b(-1,1) =0.$$
Let $f_+$ be the characteristic function of $1$ and $f_-$ the characteristic function of $-1$. Clearly, $f_\pm$ belong to $\mathcal{A}$.
On the other hand a direct calculation shows that $f= f_{+}+f_{-}=\max\{f_{+},f_{-}\}$ does not belong to $\mathcal{A}$ (see   an example in  \cite{FLW} for a similar reasoning as well).

\smallskip

(c) The set $\mbox{Lip}_\delta^1$ is closed under taking suprema by general principles. By (b),  the set $\mbox{Lip}_\delta^1$ will then  in general not be equal to   $\mathcal{A}$.  Put differently, in general
    $$\sigma (x,y):=\sup\{ |f(x) - f(y)|  : f\in \mathcal{A}\}$$
    will not be an intrinsic metric.  This is very different for strongly local Dirichlet forms; compare the Rademacher type theorem in \cite{FLW} and the considerations of \cite{Stol}.

\smallskip

(d) Define for  $\alpha >0$ the scaled version of the combinatorial metric via $d_\alpha = \alpha d$. Obviously,  $d_\alpha$ is then an intrinsic metric if and only if
    $$ \alpha  \sum_{y\in X} b(x,y) \leq m(x),$$
    for all $x\in X$.
On the other hand,  by a result of \cite{HKLW}, boundedness of the associated Laplacian is equivalent to existence of a $C>0$ such that
$$\frac{1}{C} \sum_{y\in X} b(x,y) \leq  m(x)$$
for all $x\in X$. Thus, (a suitably scaled version of) the  combinatorial metric $d$  is an intrinsic metric if and only if the associated Laplacian is bounded (compare  \cite{FLW,HKW} as well).
\medskip

A specific example of an intrinsic pseudo metric is the pseudo metric $\varrho$ introduced by  Huang in \cite{Hua}, Lemma 1.6.4., for connected graphs  and defined by
$$
\varrho : X\times X\to [0,\infty),
$$
$$\varrho(x,y)=\inf_\gamma \sum_{k=1}^n \min\left\{\frac 1{\sqrt{\Deg(x_{k-1})}},\,\frac 1{\sqrt{\Deg(x_{k})}}\right\}.
$$
Here, the infimum is taken over  all paths $\gamma=(x_0,\dots,x_n)$ connecting $x$ and $y$ and the \textit{generalized degree} $\Deg $ is the function
$$\Deg : X\to [0,\infty),\; x\mapsto \frac{1}{m(x)} \left(\sum_{y\in X} b(x,y) + c(y)\right).$$

\textbf{Remark.} In general $\varrho$ is not a metric. However, if the graph is locally finite then, clearly, any point has a positive distance to each of its neighbors and $\varrho$ is a metric.

\section{Structure of order isomorphisms between   $\ell^p$-spaces} \label{Structure-oi}
In this section we deal with  positive operators between
$\ell^p$-spaces. We present a structure theorem on order isomorphism
between $\ell^p$ spaces, Theorem~\ref{orderiso}.

\medskip

\begin{definition}[Order isomorphism] Let $(\Omega_1,\A_1,\mu_1),\,(\Omega_2,\A_2,\mu_2)$ be arbitrary measure spaces and $U \colon L^p(\Omega_1,\mu_1)\to L^p(\Omega_2,\mu_2)$ be linear. Then, $U$ is called \emph{order preserving} if $U f\geq 0$ whenever $f\geq 0$. The  map $U$ is called an \emph{order isomorphism} if $U$ is invertible and both $U$ and its inverse are order preserving.
\end{definition}

We start with an easy technical lemma, which is used in the proof of Theorem~\ref{orderiso}.
\begin{lemma}\label{product zero}
Let $(\Omega_1,\A_1,\mu_1),\,(\Omega_2,\A_2,\mu_2)$ be arbitrary measure spaces and $U\colon L^p(\Omega_1,\mu_1)\to L^p(\Omega_2,\mu_2)$ an order isomorphism. Let $f,g\in L^p(\Omega_1,\mu_1)$ be given such that $fg=0$. Then $(Uf)(Ug)=0$.
\end{lemma}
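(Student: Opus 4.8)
The plan is to exploit the fact that $L^p(\Omega_1,\mu_1)$ and $L^p(\Omega_2,\mu_2)$ are Banach lattices, with the lattice operations $\vee$ and $\wedge$ given by the pointwise (a.e.) maximum and minimum, and that the hypothesis $fg=0$ is a purely order-theoretic condition on supports. Concretely, I would first observe that $fg=0$ says exactly that $f$ and $g$ have essentially disjoint supports, reduce to nonnegative functions, and then show that the order isomorphism $U$ automatically preserves the lattice operations, so that disjointness of supports is preserved.

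The crux is the observation that, since $U$ is linear, bijective, and both $U$ and $U^{-1}$ preserve order, $U$ is in fact a lattice isomorphism, i.e. $U(u\wedge v)=Uu\wedge Uv$ for all $u,v$. I would prove this directly. From $u\wedge v\le u$ and $u\wedge v\le v$ together with order preservation of $U$ one gets $U(u\wedge v)\le Uu\wedge Uv$. Conversely, from $Uu\wedge Uv\le Uu$ and $Uu\wedge Uv\le Uv$ together with order preservation of $U^{-1}$ one obtains $U^{-1}(Uu\wedge Uv)\le u$ and $\le v$, hence $U^{-1}(Uu\wedge Uv)\le u\wedge v$; applying $U$ gives the reverse inequality $Uu\wedge Uv\le U(u\wedge v)$. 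Together these yield the claimed identity. An entirely analogous argument works for $\vee$; in particular, using $U0=0$ from linearity, $U$ preserves positive and negative parts, $Uf_+=(Uf)_+$ and $Uf_-=(Uf)_-$, and the same for $g$.

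With this in hand I would argue as follows. For nonnegative $u,v$ the pointwise condition $uv=0$ a.e. is equivalent to $u\wedge v=0$; applying the lattice property and $U0=0$ gives $Uu\wedge Uv=U(u\wedge v)=0$, and since $Uu,Uv\ge 0$ this forces $(Uu)(Uv)=0$. For general $f,g$ with $fg=0$, the supports of $f$ and $g$ are essentially disjoint, hence so are the supports of $f_\pm$ and $g_\pm$, so that $f_\pm g_\pm=0$ for all four sign choices. Applying the nonnegative case to each pair and using $Uf_\pm=(Uf)_\pm$, $Ug_\pm=(Ug)_\pm$, all four products $(Uf)_\pm(Ug)_\pm$ vanish; expanding $(Uf)(Ug)=(Uf)_+(Ug)_+-(Uf)_+(Ug)_--(Uf)_-(Ug)_++(Uf)_-(Ug)_-$ then gives $(Uf)(Ug)=0$.

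The only genuine obstacle is the lattice-preservation step; everything else is bookkeeping on positive parts and supports. It is worth stressing that this step uses the full strength of the hypothesis — not merely that $U$ is positive, but that $U$ is bijective with positive inverse — since a merely positive linear bijection need not respect infima (already on $\R^2$ a positive invertible matrix whose inverse has a negative entry maps the disjoint $e_1,e_2$ to vectors with nonzero meet).
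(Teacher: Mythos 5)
Your proof is correct and follows essentially the same route as the paper: both arguments rest on showing that an order isomorphism is automatically a lattice isomorphism (using order preservation of $U$ and $U^{-1}$ in the two directions, exactly as you do) and then translating $fg=0$ into an order-theoretic disjointness condition that $U$ must preserve. The only difference is bookkeeping: the paper condenses the final step via $U|f|=|Uf|$ and the equivalence $fg=0\Leftrightarrow |f|\wedge|g|=0$, whereas you expand into the four products of positive and negative parts.
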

\begin{proof}
If $h\leq f,g$ then $Uh\leq Uf,Ug$ and $h'\leq h$ implies $Uh'\leq Uh$ since $U$ is an order isomorphism. We conclude $U(f\wedge g)=Uf\wedge Ug$ and $U(f\vee g)=Uf\vee Ug$. With $f^+=0\vee f$ and $f^-=-f\vee 0$ we obtain $U|f|=|Uf|$.\\
Furthermore, we have $fg=0$ if and only if $\mu_1$-almost everywhere $f(x)=0$ or $g(x)=0$ holds, that is, $|f(x)|\wedge |g(x)|=0$ for $\mu_1$-almost all $x\in \Omega_1$. Hence, $fg=0$ if and only if $|f|\wedge|g|=0$.\\
It follows that $|Uf|\wedge |Ug|=U(|f|\wedge|g|)=0$ and hence $(Uf)(Ug)=0$.
\end{proof}




The following theorem is a variant of the Banach-Lamberti type result (cf. \cite{Lam}, Theorem 3.1). Note that we work   with an order preserving transformation $U$ instead of isometries. This  gives us the  advantage that this theorem is valid also for $p=2$.

\begin{theorem}\label{orderiso}
Let $(X_1,m_1)$, $(X_2,m_2)$ be discrete measure spaces, $p\in[1,\infty)$ and $U\colon \ell^p(X_1,m_1)\to \ell^p(X_2,m_2)$ an order isomorphism. Then there exists a unique  function $h\colon X_2\to(0,\infty)$ and a unique  bijection $\tau\colon X_2\to X_1$ such that $Uf=h\cdot(f\circ\tau)$ for all $f\in \ell^p(X_1,m_1)$.
\end{theorem}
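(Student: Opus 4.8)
The plan is to exploit the fact that an order isomorphism preserves the full lattice structure, and hence must carry the ``point masses'' of $\ell^p(X_1,m_1)$ to those of $\ell^p(X_2,m_2)$; since in a discrete $\ell^p$-space the functions supported on a single point are precisely the elements whose order interval below them is one-dimensional, this will force $U$ into the claimed shape. Concretely, I would first record the purely order-theoretic description of point masses: a nonzero $g\ge 0$ satisfies $[0,g]=\{\lambda g : 0\le\lambda\le 1\}$ if and only if $g=c\,1_y$ for some $y$ and $c>0$. The nontrivial direction is immediate in the discrete setting, since if $g$ were supported on two or more points, restricting $g$ to one of them would produce an element of $[0,g]$ that is not a scalar multiple of $g$.

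Next I would show that $U$ respects order intervals, i.e. $[0,Ug]=U[0,g]$ for $g\ge 0$: one inclusion uses that $U$ is order preserving (together with $U0=0$), the other that $U^{-1}$ is order preserving. Combined with the linearity of $U$, this yields $[0,U1_x]=U[0,1_x]=\{\lambda\,U1_x : 0\le\lambda\le 1\}$, so by the characterization above $U1_x$ is itself a point mass, say $U1_x=c_x\,1_{\sigma(x)}$ with $c_x>0$ and $\sigma(x)\in X_2$ (here $c_x>0$ and $\sigma(x)$ exist because $U$ is injective, so $U1_x\neq 0$). Injectivity of $\sigma\colon X_1\to X_2$ now follows from Lemma~\ref{product zero}: distinct point masses are disjoint, hence their images are disjoint and cannot share a supporting point. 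Surjectivity follows by running the identical argument for the order isomorphism $U^{-1}$, whose images of the $1_y$ are again point masses. Thus $\sigma$ is a bijection, and I set $\tau:=\sigma^{-1}\colon X_2\to X_1$ and $h(y):=c_{\tau(y)}$, so that $U1_x=h(\sigma(x))\,1_{\sigma(x)}$.

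For the formula itself, I would first check $Uf=h\cdot(f\circ\tau)$ on finitely supported $f$ by extending the identity for the $1_x$ linearly, and then pass to an arbitrary $f\in\ell^p(X_1,m_1)$ by truncation. Fixing an exhaustion of $X_1$ by finite sets $F_n$ and putting $f_n:=f\cdot 1_{F_n}$, one has $\norm{f_n-f}\to 0$; since $U$, being a positive linear operator between Banach lattices, is automatically norm continuous, $Uf_n\to Uf$ in $\ell^p(X_2,m_2)$, and norm convergence in a discrete $\ell^p$-space with full support forces pointwise convergence. As $Uf_n(y)=h(y)\,f_n(\tau(y))\to h(y)\,f(\tau(y))$ for each fixed $y$, I conclude $Uf(y)=h(y)\,f(\tau(y))$ everywhere. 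Uniqueness is then read off by evaluating $h_1\cdot(1_x\circ\tau_1)=U1_x=h_2\cdot(1_x\circ\tau_2)$: both sides are nonzero point masses, so their supports and their values must coincide, giving $\tau_1=\tau_2$ and $h_1=h_2$.

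The step I expect to be the main obstacle is precisely the atom-preservation, that is, upgrading ``$U$ preserves disjointness'' (Lemma~\ref{product zero}) to ``$U$ preserves single-point support.'' Disjointness of the images $U1_x$ by itself does not prevent each $U1_x$ from spreading over several vertices; the decisive ingredient is the order-theoretic characterization of point masses through one-dimensional order intervals, together with the stability $[0,Ug]=U[0,g]$ of order intervals under an order isomorphism. A secondary, more routine technical point is the passage from finitely supported functions to all of $\ell^p$, for which I would invoke the automatic continuity of positive linear maps between Banach lattices.
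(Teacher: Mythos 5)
Your proof is correct, but it takes a genuinely different route from the paper's. The paper works row by row: for fixed $y\in X_2$ it considers the evaluation functional $S_y(f)=Uf(y)$, which is positive and hence automatically continuous, represents it by duality as $S_y(f)=\sum_{\xi}f(\xi)g_y(\xi)m_1(\xi)$ with $g_y\in\ell^q(X_1,m_1)$, and then applies Lemma~\ref{product zero} to pairs $1_x,1_{x_0}$ to force $\supp g_y$ to be a single point $\tau(y)$; since the dual representation holds for every $f$, this yields $Uf(y)=h(y)f(\tau(y))$ on all of $\ell^p$ in one stroke, and bijectivity of $\tau$ comes from composing with the analogous representation of $U^{-1}$. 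You instead work column by column: you characterize the atoms $c\,1_y$ purely order-theoretically as the nonzero positive elements $g$ whose order interval $[0,g]$ is the segment $\{\lambda g: 0\le\lambda\le 1\}$, note that an order isomorphism carries $[0,g]$ onto $[0,Ug]$, and conclude that $U$ maps atoms to atoms; in your argument Lemma~\ref{product zero} plays only the minor role of making $\sigma$ injective (which would also follow directly from injectivity of $U$, since $\sigma(x)=\sigma(x')$ would give a nontrivial element of the kernel). The trade-off is clear: the paper's duality route avoids any approximation argument, whereas your route needs the closing truncation step, for which you invoke the automatic continuity of the positive operator $U$ itself (the same external ingredient the paper invokes, but for the functionals $S_y$) together with the fact that norm convergence implies pointwise convergence when $m_2$ has full support. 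In exchange, your argument dispenses with the representation theorem for $(\ell^p)^*$ altogether and isolates the structural statement that order isomorphisms of atomic function lattices preserve atoms, which is arguably the cleaner conceptual explanation of why the matrix of $U$ has exactly one nonzero entry per row and column.
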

\begin{proof}
Uniqueness of $h$ and $\tau$ is clear. It remains to show existence.
For $y\in X_2$ define $S_y\colon \ell^p(X_1,m_1)\to\R,\,S_y(f)=Uf(y)$. Since $S_y$ is positive, it is a
continuous functional due to standard results on positive operators, see e.g. \cite[Theorem~12.3]{AB}.\\
By the representation theorem for the dual of $\ell^p$ there is a function $g_y\in \ell^q(X_1,m_1)$, $\frac 1p+\frac 1 q=1$, such that $S_y(f)=\sum_{\xi\in X_1}f(\xi)g_y(\xi)m_1(\xi)$ for all $f\in \ell^p(X_1,m_1)$.\\
Since $U$ is surjective, $\supp g_y$ is non-empty. Now let $x_0\in\supp g_y$ and $\,x\neq x_0$. Then we have $1_{x_0}1_x=0$ and by Lemma~\ref{product zero} we obtain
\begin{align*}
0=U1_x(y)\cdot U1_{x_0}(y)=g_y(x)m_1(x)g_y(x_0)m_1(x_0).
\end{align*}
Since $m_1$ has full support, we have $g_y(x)=0$. Thus, $\supp g_y=\{x_0\}$.\\
Hence, there is a function $h\colon X_2\to\R$ and a map $\tau\colon X_2\to X_1$ such that $Uf(y)=h(y)f(\tau(y))$ for all $f\in\ell^p(X_1,m_1)$, where $\supp g_y=\{\tau(y)\}$ and
\begin{align*}
h(y)=U1_{\tau(y)}(y)=g_y(\tau(y))m_1(\tau(y)).
\end{align*}
By positivity of $U$ we get $h(y)\geq0$ for all $y\in X_2$.\\
Analogously we obtain a function $\tilde h\colon X_1\to[0,\infty)$ and a map $\tilde\tau\colon X_1\to X_2$ such that $U^{-1}g=\tilde h\cdot g\circ \tilde \tau$ for all $g\in\ell^2(X_2,m_2)$. Hence, we have for all $y\in X_2$
\begin{eqnarray*}
1 &= & 1_y(y)=UU^{-1}1_y(y)\\
&= &  (h\cdot (\tilde h\cdot 1_y\circ\tilde \tau)\circ\tau)(y),\\
&= &\begin{cases}h(y)\tilde h(\tau(y))&\colon \tilde\tau(\tau(y))=y\\0&\colon\text{else},
\end{cases}
\end{eqnarray*}
that is, $\tilde\tau\circ\tau=\id_{X_2}$ and $h(y)\neq 0$ for all $y\in X_2$. The equation $\tau\circ\tilde\tau=\id_{X_1}$ is proven  analogously. Thus, $\tau$ is a bijection with inverse $\tilde\tau$.
\end{proof}

\textbf{Remark.}
If we think of elements of $\ell^p$ as sequences and operators between $\ell^p$-spaces as infinite matrices, the above theorem shows that the matrix of an order isomorphisms has exactly one non-zero entry in every row and column, which is strictly positive.

\begin{definition}[Bijection and scaling associated to order isomorphism] Let   $U\colon \ell^p(X_1,m_1)\to \ell^p(X_2,m_2)$ be an  order isomorphism  and $h\colon X_2\to(0,\infty)$ and  $\tau\colon X_2\to X_1$ the unique functions with  $Uf=h\cdot(f\circ\tau)$ for all $f\in \ell^p(X_1,m_1)$. Then, $\tau$ is called the \emph{bijection associated to} $U$ and $h$ is called \emph{the scaling associated} to $U$.
\end{definition}
For later use we now  compute an explicit formula for the adjoint of an order isomorphism.

\begin{lemma}\label{adjoint_orderiso} Let $p\in [1,\infty)$ be given and let $q $ with  $\frac{1}{p} + \frac{1}{q} = 1$ be chosen.
Let $\tau\colon X_2\to X_1$ be a bijection and $h\colon X_2\to (0,\infty)$ such that for all $f\in\ell^p(X_1,m_1)$ the function $h\cdot (f\circ\tau)$ belongs to $\ell^p(X_2,m_2)$. Define $U\colon \ell^p(X_1,m_1)\to \ell^p(X_2,m_2),\,f\mapsto h\cdot (f\circ\tau)$.
Then the adjoint of $U$ is given by
\begin{align*}
U^\ast\colon\ell^q(X_2,m_2)\to \ell^q(X_1,m_1),\,U^\ast f=\frac 1{m_1}(h\cdot f\cdot m_2)\circ\tau^{-1}.
\end{align*}
Moreover, for all $f\in C_c(X_1,m_1)$  and all $g \in C_c(X_2,m_2)$ we have
\begin{align*}
U^\ast Uf(x)&=\frac{m_2(\tau^{-1}(x))}{m_1(x)}h(\tau^{-1}(x))^2 f(x)\\
U U ^*g(y)&=\frac{m_2(y)}{m_1(\tau(y))}h(y)^2 g(y).
\end{align*}
\end{lemma}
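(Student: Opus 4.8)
The plan is to read off $U^\ast$ directly from the defining relation of the adjoint, namely $\langle Uf,g\rangle_{m_2}=\langle f,U^\ast g\rangle_{m_1}$ for $f\in\ell^p(X_1,m_1)$ and $g\in\ell^q(X_2,m_2)$, where $\langle u,v\rangle_{m_i}=\sum_\cdot u(\cdot)v(\cdot)\,m_i(\cdot)$ denotes the dual pairing. Before this is meaningful I would record that $U$ is an everywhere-defined positive linear map (the hypothesis guarantees $h\cdot(f\circ\tau)\in\ell^p(X_2,m_2)$ for all $f$, and $h>0$) and is therefore bounded by the very result on positive operators invoked in Theorem~\ref{orderiso}, so that $U^\ast\colon\ell^q(X_2,m_2)\to\ell^q(X_1,m_1)$ genuinely exists.

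First I would expand the left-hand pairing using $Uf=h\cdot(f\circ\tau)$:
$$\langle Uf,g\rangle_{m_2}=\sum_{y\in X_2}h(y)\,f(\tau(y))\,g(y)\,m_2(y).$$
Since $f\in\ell^p$ and $g\in\ell^q$, H\"older's inequality makes this sum absolutely convergent, so I may reindex freely. Substituting $x=\tau(y)$, i.e. $y=\tau^{-1}(x)$, converts the sum over $X_2$ into a sum over $X_1$:
$$\langle Uf,g\rangle_{m_2}=\sum_{x\in X_1}f(x)\,h(\tau^{-1}(x))\,g(\tau^{-1}(x))\,m_2(\tau^{-1}(x)).$$
Comparing this with $\langle f,U^\ast g\rangle_{m_1}=\sum_{x\in X_1}f(x)\,(U^\ast g)(x)\,m_1(x)$ and specializing to $f=1_x$ (which lies in $\ell^p$ because $\|1_x\|_p^p=m_1(x)<\infty$) forces
$$(U^\ast g)(x)\,m_1(x)=h(\tau^{-1}(x))\,g(\tau^{-1}(x))\,m_2(\tau^{-1}(x)),$$
which is exactly the asserted formula $U^\ast g=\frac1{m_1}(h\cdot g\cdot m_2)\circ\tau^{-1}$.

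Finally I would obtain the two composition formulas by plain substitution. For $f\in C_c(X_1)$ and $g\in C_c(X_2)$ all the expressions involved are finite sums, so no convergence questions arise: combining $(Uf)(y)=h(y)f(\tau(y))$ with the expression just derived for $U^\ast$, and using $\tau(\tau^{-1}(x))=x$ and $\tau^{-1}(\tau(y))=y$, a direct evaluation gives $U^\ast Uf(x)=\frac{m_2(\tau^{-1}(x))}{m_1(x)}h(\tau^{-1}(x))^2 f(x)$ and, symmetrically, $UU^\ast g(y)=\frac{m_2(y)}{m_1(\tau(y))}h(y)^2 g(y)$. The only care needed throughout is to keep track of which measure weight, $m_1$ or $m_2$, enters each of the two dual pairings and to apply $\tau^{-1}$ to the correct variable; there is no genuine analytic obstacle here, the entire content being the change of variables under the bijection $\tau$.
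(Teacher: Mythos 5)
Your proposal is correct and follows essentially the same route as the paper: both establish boundedness of the everywhere-defined operator $U$ (the paper via the closed graph argument, you via continuity of positive operators, a trivial variation) and then read off $U^\ast$ from the dual pairing by testing against characteristic functions $1_x$ and changing variables under the bijection $\tau$, with the composition formulas following by direct substitution.
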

\begin{proof}
The operator $U$ is obviously closed and defined on the whole space. Thus, it is a bounded operator. For this reason its adjoint is a bounded operator as well. Denoting the dual pairing between $\ell^p$ and $\ell^q$ by $(\cdot,\cdot)$ we can now compute for any $g\in \ell^q (X_2,m_2)$
\begin{eqnarray*}
U^\ast g (x) &=& (U^\ast g, \frac{1}{m_1 (x)}  1_x)\\
&=& (g , U ( \frac{1}{m_1 (x)} 1_x) )\\
&=& \sum_{y\in X_2} g(y) h (y)  \frac{1}{m_1 (x)} 1_x (\tau (y)) m_2 (y) \\
&=& g (\tau^{-1} (x)) h (\tau^{-1} (x)) \frac{m_2 (\tau^{-1} (x)) }{m_1(x)}.
\end{eqnarray*}

The second assertion follows easily:
\begin{align*}
U^\ast Uf(x)&=\frac 1{m_1(x)}(h\cdot Uf\cdot m_2)\circ\tau^{-1}(x)\\
&=\frac{((h^2\cdot m_2)\circ\tau^{-1})(x)} {m_1(x)}\cdot f(x).
\end{align*}
A similar computation for $UU^*$ finishes the proof.
\end{proof}

\section{Structure of order isomorphisms intertwining graphs}\label{Structure}
In Section~\ref{Graphs} we have seen that graphs naturally give rise to  Markovian semigroups. We are interested in  graphs with semigroups which are equal up to order isomorphism. This is captured in the concept of intertwining  order isomorphism.  In this section, we first discuss and characterize this  intertwining property and then study the functions $h$ and $\tau$ arising from intertwining order isomorphisms $U$. We will see that these functions behave well with respect to the corresponding edge weights. On the technical level this is the core of the paper.
\medskip

The following  proposition gives a characterization of intertwining of operators in terms of semigroups. It is certainly well known. For the convenience of the reader  we provide a proof in the appendix in  Proposition~\ref{intertwining_operators} (compare the  appendix of \cite{KLW} for related material as well).
\begin{proposition}
For $i=1,2$, let $Q_i$ be a Dirichlet form associated to the graph $(b_i,c_i)$ over  $(X_i,m_i)$. Let $p\in [1,\infty)$ be given and consider  the associated semigroups on $\ell^p (X_i,m_i)$ given  by $e^{-t L_i^{(p)} }$, $i = 1,2$.
Furthermore, let $U\colon \ell^p (X_1,m_1)\to \ell^p(X_2,m_2)$ be an order isomorphism.  Then the following assertions are equivalent:
\begin{itemize}
\item[(i)]$Ue^{-tL_1^{(p)} }=e^{-tL_2^{(p)} }U$ for all $t\geq 0$.
\item[(ii)]$U D(L_1^{(p)} )=D( L_2^{(p)} )$ and $U L_1^{(p)} f = L_2^{(p)}  Uf$ for all $f\in D( L_1^{(p)} )$.
\end{itemize}
\end{proposition}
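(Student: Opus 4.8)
The plan is to exploit the fact that an order isomorphism $U$ is automatically a bounded, boundedly invertible operator, so that it may be freely interchanged with the strong limits and derivatives that relate a $C_0$-semigroup to its generator. Indeed, by Theorem~\ref{orderiso} the map $U$ is everywhere defined and closed (as already noted in the proof of Lemma~\ref{adjoint_orderiso}), hence bounded by the closed graph theorem; the same applies to $U^{-1}$, which is again an order isomorphism. Since $U^{-1}e^{-tL_2^{(p)}}=e^{-tL_1^{(p)}}U^{-1}$ whenever (i) holds, the whole situation is symmetric in the two indices, and I will use this symmetry to promote inclusions of domains to equalities.

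For the implication (i)$\Rightarrow$(ii), I would fix $f\in D(L_1^{(p)})$ and differentiate the semigroup identity at $t=0$. Because $U$ is bounded it commutes with the difference quotient, so
\begin{align*}
\lim_{t\downarrow 0}\frac{e^{-tL_2^{(p)}}Uf-Uf}{t}=\lim_{t\downarrow 0}U\,\frac{e^{-tL_1^{(p)}}f-f}{t}=-U L_1^{(p)}f.
\end{align*}
Existence of the left-hand limit means precisely that $Uf\in D(L_2^{(p)})$ and $L_2^{(p)}Uf=UL_1^{(p)}f$; thus $UD(L_1^{(p)})\subseteq D(L_2^{(p)})$ together with the intertwining of the generators. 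Applying the same argument to $U^{-1}$ gives $U^{-1}D(L_2^{(p)})\subseteq D(L_1^{(p)})$, i.e.\ $D(L_2^{(p)})\subseteq UD(L_1^{(p)})$, and hence equality of domains.

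For the converse (ii)$\Rightarrow$(i), I would fix $f\in D(L_1^{(p)})$ and set $u(t):=Ue^{-tL_1^{(p)}}f$. Since the semigroup leaves $D(L_1^{(p)})$ invariant and $t\mapsto e^{-tL_1^{(p)}}f$ is differentiable with derivative $-L_1^{(p)}e^{-tL_1^{(p)}}f$, boundedness of $U$ together with the hypothesis $UL_1^{(p)}=L_2^{(p)}U$ on $D(L_1^{(p)})$ yields that $u$ takes values in $D(L_2^{(p)})$, is differentiable, and satisfies $u'(t)=-L_2^{(p)}u(t)$ with $u(0)=Uf$. In other words, $u$ solves the abstract Cauchy problem for $L_2^{(p)}$. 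Uniqueness of such solutions is the standard computation: for fixed $t$ the map $s\mapsto e^{-(t-s)L_2^{(p)}}u(s)$ on $[0,t]$ has vanishing derivative, so comparing its endpoints gives $u(t)=e^{-tL_2^{(p)}}Uf$. This proves $Ue^{-tL_1^{(p)}}f=e^{-tL_2^{(p)}}Uf$ for $f$ in the dense set $D(L_1^{(p)})$, and since both sides are bounded in $f$ the identity extends to all of $\ell^p(X_1,m_1)$.

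The argument is entirely soft, and I do not expect a serious obstacle: the only points requiring care are the automatic boundedness of $U$ (for which Theorem~\ref{orderiso} and the closed graph theorem suffice) and the interchange of $U$ with limits and derivatives, which boundedness licenses. The one genuinely structural ingredient is the uniqueness of solutions of the Cauchy problem for a $C_0$-semigroup generator, which is available here because $p\in[1,\infty)$ guarantees strong continuity of the semigroups $e^{-tL_i^{(p)}}$; everything else is formal manipulation.
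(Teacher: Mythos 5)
Your proof is correct, and its first half coincides with the paper's: for (i)$\Rightarrow$(ii) the paper (in Proposition~\ref{intertwining_operators} of the appendix) also differentiates the intertwined semigroups at $t=0$, using that $U$ and $U^{-1}$ are bounded to pull them through the difference quotients and obtain both the domain identity and the intertwining of the generators. Where you genuinely diverge is the converse. The paper conjugates the first semigroup, setting $S_t=Ue^{-tL_1^{(p)}}U^{-1}$, checks that $(S_t)_{t\geq 0}$ is a strongly continuous semigroup on the second space, computes its generator to be $L_2^{(p)}$ via hypothesis (ii), and then invokes the fact that a $C_0$-semigroup is determined by its generator. You instead fix $f\in D(L_1^{(p)})$, show that $u(t)=Ue^{-tL_1^{(p)}}f$ solves the abstract Cauchy problem for $L_2^{(p)}$ with initial value $Uf$, and prove uniqueness by hand through the vanishing derivative of $s\mapsto e^{-(t-s)L_2^{(p)}}u(s)$, finally extending from the dense set $D(L_1^{(p)})$ by boundedness of both sides. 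The two routes rest on essentially the same underlying fact -- uniqueness for the Cauchy problem is how one usually proves that the generator determines the semigroup -- so the difference is one of packaging: the paper's version is shorter once that fact is taken as a black box, but requires verifying the semigroup law and strong continuity of $S_t$; yours is more self-contained (the uniqueness argument is written out) at the cost of an extra density step, and it is the density of $D(L_1^{(p)})$, guaranteed by strong continuity for $p\in[1,\infty)$, that makes this extension legitimate.
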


\begin{definition}[Intertwining of operators]\label{def_intertwining_operators}
 Consider the situation of the proposition. The order isomorphism $U$ is said to \emph{intertwine} $L_1^{(p)} $ and $L_2^{(p)} $ if one of the equivalent assertions of the previous proposition holds.
\end{definition}

\begin{lemma}\label{lemma_properties_U}
For $i=1,2$, let $Q_i$ be a Dirichlet form associated to the graph $(b_i,c_i)$ over  $(X_i,m_i)$ and assume $(b_1,c_1)$ is connected. Let $p\in [1,\infty)$ be given and consider  the  semigroups associated to $Q_i$   on $\ell^p (X_i,m_i)$ given  by $e^{-t L_i^{(p)} }$ for  $i = 1,2$.
 Let $U\colon \ell^p (X_1,m_1)\to \ell^p(X_2,m_2)$ be an order isomorphism intertwining $L_1^{(p)} $ and $L_2^{(p)}$ with associated scaling $h$ and bijection $\tau$.  Then, the function
$$ X_1 \to (0,\infty),\, x \mapsto \frac{m_2(\tau^{-1}(x))}{m_1(x)}h(\tau^{-1}(x))^2$$
is constant.
\end{lemma}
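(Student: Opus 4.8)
The plan is to recognize that the displayed function is exactly the multiplier of the positive operator $U^\ast U$, to show that this operator commutes with $L_1$, and then to use connectedness of $(b_1,c_1)$ to force the multiplier to be constant.

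First I would identify the function. By Theorem~\ref{orderiso} we may write $Uf=h\cdot(f\circ\tau)$, and Lemma~\ref{adjoint_orderiso} gives, for $f\in C_c(X_1)$,
$$U^\ast Uf(x)=\frac{m_2(\tau^{-1}(x))}{m_1(x)}\,h(\tau^{-1}(x))^2\,f(x)=:g(x)f(x).$$
Thus $U^\ast U$ acts on $C_c(X_1)$ as multiplication by $g$, and $g$ is precisely the function claimed to be constant.

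Next I would establish that multiplication by $g$ commutes with the semigroup of $L_1$. From the intertwining $Ue^{-tL_1^{(p)}}=e^{-tL_2^{(p)}}U$ one obtains, by passing to Banach space adjoints and using that the dual of $e^{-tL_i^{(p)}}$ is the consistent semigroup $e^{-tL_i^{(q)}}$ on $\ell^q$, the reverse intertwining $e^{-tL_1^{(q)}}U^\ast=U^\ast e^{-tL_2^{(q)}}$. Composing, for $f\in C_c(X_1)$ (so that $Uf$ and $e^{-tL_1^{(p)}}f=e^{-tL_1^{(q)}}f$ remain in $\ell^p\cap\ell^q$),
$$U^\ast U\,e^{-tL_1^{(p)}}f=U^\ast e^{-tL_2^{(p)}}Uf=e^{-tL_1^{(q)}}U^\ast Uf,$$
that is, $g\cdot e^{-tL_1}f=e^{-tL_1}(gf)$. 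Passing from the semigroup to the generator (equivalently differentiating at $t=0$, using that on $C_c(X_1)$ the generator acts as $\ow L_1=\frac1{m_1}\LL_{b_1,c_1}$) should yield the pointwise identity $\ow L_1(gf)=g\,\ow L_1 f$ on $C_c(X_1)$.

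Finally I would expand this identity. In $\ow L_1(gf)(x)-g(x)\ow L_1 f(x)$ the killing terms and the diagonal contribution $g(x)f(x)\sum_y b_1(x,y)$ cancel, leaving
$$\frac1{m_1(x)}\sum_{y\in X_1}b_1(x,y)\big(g(x)-g(y)\big)f(y)=0.$$
Since this holds for every $f\in C_c(X_1)$ and every $x$, we get $b_1(x,y)(g(x)-g(y))=0$ for all $x,y\in X_1$, so $g(x)=g(y)$ whenever $x,y$ are neighbors; connectedness of $(b_1,c_1)$ then gives that $g$ is constant. The routine part is this last expansion. The main obstacle is making the middle step rigorous for general $p$: one must justify the duality identification of the adjoint semigroups, ensure the functions $f$, $gf$ and $e^{-tL_1}f$ lie where $L_1$ acts as $\ow L_1$, and pass from the semigroup identity to the generator identity. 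Since $g$ need not be bounded and $C_c(X_1)$ need not be contained in $D(L_1^{(p)})$ a priori, this passage (through the quadratic form of $L_1$, or through pointwise convergence of $\frac1t(f-e^{-tL_1}f)$) is where the care is required.
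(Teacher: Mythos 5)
Your proof is correct, but it takes a genuinely different route from the paper's. Both arguments begin identically, identifying the displayed function $\varphi$ as the multiplier of $U^\ast U$ via Lemma~\ref{adjoint_orderiso}. From there the paper stays entirely at the level of the semigroup: it pairs $e^{-tL_1^{(p)}}1_x$ against $U^\ast U 1_y$, uses consistency of the $\ell^p$ semigroups and self-adjointness on $\ell^2$ to obtain $\varphi(y)\langle e^{-tL_1^{(2)}}1_x,1_y\rangle=\varphi(x)\langle e^{-tL_1^{(2)}}1_x,1_y\rangle$, and then invokes the positivity improving property of semigroups of connected graphs (Corollary~2.9 in \cite{KL1}) to divide by $\langle e^{-tL_1^{(2)}}1_x,1_y\rangle>0$, giving $\varphi(x)=\varphi(y)$ for \emph{all} pairs $x,y$ in one stroke. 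You instead descend to the generator: from the intertwining and its adjoint you show that multiplication by $\varphi$ commutes with the semigroup on $C_c(X_1)$, pass to the pointwise identity $\mathcal{L}_1(\varphi f)=\varphi\,\mathcal{L}_1 f$, conclude that $\varphi$ is constant along edges, and finish with combinatorial connectedness. Your approach avoids the positivity improving input (connectedness enters only combinatorially), at the price of the duality/consistency bookkeeping and the semigroup-to-generator passage you flag; that passage does close rigorously along exactly the line you indicate: for $f\in C_c(X_1)$ both $f$ and $\varphi f$ lie in $C_c(X_1)\subseteq D(Q_1)$, so pairing your semigroup identity with $1_x$ and letting $t\downarrow 0$ gives $Q_1(\varphi f,1_x)=\varphi(x)\,Q_1(f,1_x)$, which by Green's formula is precisely $\mathcal{L}_1(\varphi f)(x)=\varphi(x)\mathcal{L}_1 f(x)$, with no need for $C_c(X_1)$ to lie in the operator domain and no boundedness of $\varphi$. (For $p=1$ one should remark that the dual semigroup acts on $\ell^\infty$, but since every function involved is either in $C_c$ or a semigroup image of such a function, consistency on intersections still carries the argument through.)
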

\begin{proof} We denote the function $x \mapsto \frac{m_2(\tau^{-1}(x))}{m_1(x)}h(\tau^{-1}(x))^2$ by $\varphi$ and write $(\cdot,\cdot)$ for the dual pairing between $\ell^p$ and  $\ell^q$, with $\frac{1}{p} + \frac{1}{q} = 1$.  Lemma \ref{adjoint_orderiso} shows
\begin{align*}
\varphi(y) ( e^{-t L_1^{(p)} } 1_x,1_y ) &= (e^{-t L_1^{(p)} } 1_x,U^*U 1_y)\\&= ( U e^{-t L_1^{(p)} } 1_x,U 1_y )\\
&= ( e^{-t L_2^{(p)} } U1_x,U 1_y).
\end{align*}
As semigroups of Dirichlet forms act on all $\ell^p$ spaces, coincide on their intersections and are self-adjoint on $\ell^2$, the above implies
\begin{align*}
\varphi(y) \langle e^{-t L_1^{(2)} } 1_x,1_y\rangle &=\langle e^{-t L_2^{(2)} } U1_x,U 1_y\rangle\\
&= \langle e^{-t L_2^{(2)} }  U 1_y, U1_x \rangle\\
&= \varphi(x) \langle e^{-t L_1^{(2)} } 1_y, 1_x\rangle\\
&= \varphi(x) \langle e^{-t L_1^{(2)} }1_x,  1_y\rangle.
\end{align*}
Semigroups associated with connected graphs are positivity improving, that is $\langle e^{-t L_1^{(2)} } 1_x,1_y\rangle  > 0$ for all $x,y \in X_1$ (see e.g. Corollary 2.9 in \cite{KL1}). Dividing by this quantity finishes the proof.
\end{proof}

\begin{lemma} \label{intertwining in ell^2}
For $i=1,2$, let $Q_i$ be a Dirichlet form associated to the graph $(b_i,c_i)$ over  $(X_i,m_i)$ and assume $(b_1,c_1)$ is connected.
Let $p\in [1,\infty)$ be given and consider  the  semigroups associated to $Q_i$   on $\ell^p (X_i,m_i)$ given  by $e^{-t L_i^{(p)}}$ for  $i = 1,2$.
 Let $U\colon \ell^p (X_1,m_1)\to \ell^p(X_2,m_2)$ be an order isomorphism intertwining $L_1^{(p)} $ and $L_2^{(p)}$ with associated bijection $\tau$ and associated scaling $h$. Then there exists a constant $\beta >0$ such that the operator $\ow{U}:\ell^2(X_1,m_1) \to \ell^2(X_2,m_2)$, $f \mapsto h \cdot (f \circ \tau)$ satisfies
 $$\|f\|_{m_{1}}^2 = \beta \|\ow{U}f\|_{m_{2}}^2.$$
 Furthermore, $\ow{U}$ is an order isomorphism intertwining $L_1^{(2)}$ and $L_2^{(2)}$.
\end{lemma}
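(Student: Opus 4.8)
The plan is to read everything off from the two preceding lemmas. The essential input is Lemma~\ref{lemma_properties_U}, which tells us that the function $\varphi(x)=\frac{m_2(\tau^{-1}(x))}{m_1(x)}h(\tau^{-1}(x))^2$ equals a single constant $c_0>0$ on all of $X_1$; equivalently $m_2(\tau^{-1}(x))\,h(\tau^{-1}(x))^2=c_0\,m_1(x)$ for every $x\in X_1$. First I would use this to compute the $\ell^2$-norm of $\ow U f$. For $f\in C_c(X_1)$, substituting $x=\tau(y)$ (legitimate since $\tau$ is a bijection) gives
\[
\|\ow U f\|_{m_2}^2=\sum_{y\in X_2}h(y)^2|f(\tau(y))|^2 m_2(y)=\sum_{x\in X_1}h(\tau^{-1}(x))^2 m_2(\tau^{-1}(x))|f(x)|^2=c_0\|f\|_{m_1}^2 .
\]
As all summands are nonnegative, the same identity passes to every $f\in\ell^2(X_1,m_1)$ by unconditional summation; in particular $\ow U$ maps $\ell^2(X_1,m_1)$ boundedly into $\ell^2(X_2,m_2)$ and $\|f\|_{m_1}^2=\beta\|\ow U f\|_{m_2}^2$ holds with $\beta:=1/c_0>0$, which is the asserted norm relation.

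Next I would check that $\ow U$ is an order isomorphism directly from its form $f\mapsto h\cdot(f\circ\tau)$. Since $h>0$, the map is order preserving. The norm identity shows that $\ow U$ is a positive scalar multiple of an isometry, hence injective with closed range. From $\ow U 1_x=h(\tau^{-1}(x))\,1_{\tau^{-1}(x)}$ and $h>0$ one sees that the range contains every $1_y$, $y\in X_2$, so it contains the dense subspace $C_c(X_2)$; being closed, the range is all of $\ell^2(X_2,m_2)$. Thus $\ow U$ is bijective, and its inverse $g\mapsto\frac{1}{h\circ\tau^{-1}}\,(g\circ\tau^{-1})$ is order preserving because $1/h>0$. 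Hence $\ow U$ is an order isomorphism (this also matches the form prescribed by Theorem~\ref{orderiso}).

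It remains to transfer the intertwining from level $p$ to level $2$, and this is the step I expect to require the most care. The mechanism is that $\ow U$ and $U$ are given by the same pointwise formula and therefore agree on $\ell^p(X_1,m_1)\cap\ell^2(X_1,m_1)$, while the Markovian semigroups of a fixed Dirichlet form on different $\ell^r$-spaces agree on the intersections of these spaces. Fixing $f\in C_c(X_1)\subseteq\ell^p\cap\ell^2$, the element $g_t:=e^{-tL_1^{(2)}}f=e^{-tL_1^{(p)}}f$ again lies in $\ell^p\cap\ell^2$, so $\ow U g_t=U g_t$; similarly $\ow U f=Uf\in\ell^p\cap\ell^2$, whence $e^{-tL_2^{(2)}}\ow U f=e^{-tL_2^{(p)}}Uf$. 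Using the intertwining of $U$ at level $p$ I then obtain
\[
\ow U e^{-tL_1^{(2)}}f=U e^{-tL_1^{(p)}}f=e^{-tL_2^{(p)}}Uf=e^{-tL_2^{(2)}}\ow U f .
\]
Finally, density of $C_c(X_1)$ in $\ell^2(X_1,m_1)$ together with boundedness of $\ow U$ and of the semigroups extends this to all $f\in\ell^2(X_1,m_1)$, giving $\ow U e^{-tL_1^{(2)}}=e^{-tL_2^{(2)}}\ow U$ for every $t\geq 0$; that is, $\ow U$ intertwines $L_1^{(2)}$ and $L_2^{(2)}$. The only genuinely delicate bookkeeping is ensuring that every function to which I apply the two compatibility facts really lies in the relevant intersection $\ell^p\cap\ell^2$, which is why I keep all arguments within $C_c$ until the final density step.
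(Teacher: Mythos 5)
Your proposal is correct and follows essentially the same route as the paper: it invokes Lemma~\ref{lemma_properties_U} to obtain the constant $\beta$, derives the norm identity by direct computation, gets surjectivity from the fact that $\ow U$ maps $C_c(X_1)$ onto $C_c(X_2)$ together with density and the (scaled) isometry property, and transfers the intertwining via the coincidence of $U$ and $\ow U$, and of $e^{-tL_i^{(p)}}$ and $e^{-tL_i^{(2)}}$, on $\ell^p\cap\ell^2$. You merely spell out in detail the steps the paper compresses into its opening sentence and its ``easy direct computation.''
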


\begin{proof}
Since $\ow{U}$ and $U$ as well as $e^{-t L_i^{(p)}}$ and $e^{-tL_i^{(2)}}$ coincide on $\ell^2\cap \ell^p$, it suffices to show the continuity statement on $\ow{U}$ and its surjectivity.
Using Lemma~\ref{lemma_properties_U} we set
$$\beta = \frac{m_1(x)}{m_2(\tau^{-1}(x))h(\tau^{-1}(x))^2},$$
for some $x \in X_1$. Then an easy  direct computation shows
$$\|f\|_{m_{1}}^2 = \beta \|\ow{U}f\|_{m_{2}}^2,$$
 for each $f \in \ell^2(X_1,m_1)$. Thus, $\ow{U}$ is an isometry (up to a scaling).  Moreover, $\ow{U}$ obviously maps $C_c (X_1)$ onto $C_c (X_2)$. As $C_c$ is dense in $\ell^2$ and $\ow{U}$ is  an isometry (up to a scaling),  $\ow{U}$ is in fact   surjective. This finishes the proof.
\end{proof}

{\bf Remark.} Lemma~\ref{intertwining in ell^2} shows that intertwining on $\ell^p$ always implies intertwining on $\ell^2$. Hence, for a better readability of the statements we subsequently only formulate theorems for the case $p = 2$. The reader however should keep in mind that the theory remains true in the general case $p \in [1,\infty)$. We also drop the superscript on the $\ell^2$ generator of a Dirichlet form, i.e., write $L$ instead of $L^{(2)}$ (as already discussed in the first section).
\medskip

\begin{corollary} \label{U unitary} Assume the situation of the previous lemma.
 Then the operator $\sqrt{\beta}\ow{U}: \ell^2(X_1,m_1) \to \ell^2(X_2,m_2)$ is unitary.
\end{corollary}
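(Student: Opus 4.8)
The plan is to observe that Lemma~\ref{intertwining in ell^2} has already assembled every ingredient, so that the corollary is essentially a one-line consequence. The lemma produces a constant $\beta>0$ together with the scaling identity $\|f\|_{m_1}^2 = \beta\,\|\ow{U}f\|_{m_2}^2$ valid for all $f\in\ell^2(X_1,m_1)$. First I would simply rewrite this as $\|\sqrt{\beta}\,\ow{U}f\|_{m_2}^2 = \beta\,\|\ow{U}f\|_{m_2}^2 = \|f\|_{m_1}^2$, which says precisely that $\sqrt{\beta}\,\ow{U}$ is a norm-preserving (hence bounded and injective) linear map. By polarization this equally means $\sqrt{\beta}\,\ow{U}$ preserves the inner product, i.e. $\langle \sqrt{\beta}\,\ow{U}f, \sqrt{\beta}\,\ow{U}g\rangle_{m_2} = \langle f,g\rangle_{m_1}$ for all $f,g$.

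Next I would invoke the surjectivity of $\ow{U}$, which is also part of the conclusion of Lemma~\ref{intertwining in ell^2} (there $\ow{U}$ maps $C_c(X_1)$ onto $C_c(X_2)$ and, being an isometry up to scaling onto a dense range, is surjective). Scaling by the positive constant $\sqrt{\beta}$ does not affect surjectivity, so $\sqrt{\beta}\,\ow{U}$ is a surjective isometry between the Hilbert spaces $\ell^2(X_1,m_1)$ and $\ell^2(X_2,m_2)$. A surjective isometry between Hilbert spaces is unitary: the inner-product identity above gives $(\sqrt{\beta}\,\ow{U})^\ast(\sqrt{\beta}\,\ow{U}) = \id_{\ell^2(X_1,m_1)}$, and surjectivity upgrades this to $(\sqrt{\beta}\,\ow{U})(\sqrt{\beta}\,\ow{U})^\ast = \id_{\ell^2(X_2,m_2)}$ as well, so $\sqrt{\beta}\,\ow{U}$ is invertible with inverse equal to its adjoint.

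I do not anticipate any genuine obstacle here, since the corollary is a direct packaging of facts already established in the lemma; the only point worth spelling out explicitly is the standard equivalence between being a surjective isometry and being unitary, which follows from polarization and the computation of the adjoint just indicated.
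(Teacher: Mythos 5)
Your proposal is correct and follows exactly the paper's own argument: the paper's proof is the one-line observation that, by Lemma~\ref{intertwining in ell^2}, the operator $\sqrt{\beta}\ow{U}$ is a surjective isometry and hence unitary. You merely spell out the standard details (polarization and the adjoint identities) behind that same one-line deduction.
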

\begin{proof}
By the previous lemma the operator $\sqrt{\beta}\ow{U}$ is a surjective isometry and hence unitary.
\end{proof}

\textbf{Remark.} The previous corollary provides a crucial ingredient of the investigations of this paper. Indeed, it is underlying the proof of the next theorem, which is the main technical tool in our considerations. On the abstract level it is remarkable in connecting the -- a priori rather different -- concepts of unitary isomorphisms and order isomorphisms in our context (compare discussion in the introduction).


\medskip

We can now come to the main technical result of the paper. Indeed, the subsequent theorem  gives the main connection between the weights of graphs whose Laplacians are intertwined by an order preserving isomorphism.

\begin{theorem}\label{Gleichung}
For $i=1,2$, let $Q_i$ be a Dirichlet form associated to the graph $(b_i,c_i)$ over  $(X_i,m_i)$ such that $(b_1,c_1)$ is connected. Let $U\colon \ell^2 (X_1,m_1)\to \ell^2(X_2,m_2)$ be an order isomorphism intertwining $L_1 $ and $L_2$ with associated bijection $\tau$ and scaling $h$. Then the Dirichlet forms satisfy  $D(Q_2) = UD(Q_1)$ and there exists some constant $\beta >0,$ such that
 $$Q_1(f,g) = \beta Q_2(Uf,Ug),$$
 for each $f,g \in D(Q_1)$. Furthermore, the equalities
\begin{align*}
m_1(\tau(w)) &= \beta h(w)^2 m_2(w)\\
b_1(\tau(x),\tau(y)) &= \beta h(x) h(y) b_2(x,y)\\
c_1(\tau(z)) &= \beta h(z) \mathcal{L}_2h(z),
\end{align*}
hold for all $w,x,y,z\in X_2$. Here $\mathcal{L}_2$ is the formal operator associated with $(b_2,c_2)$.
\end{theorem}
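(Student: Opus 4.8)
The plan is to first establish the quadratic form identity $Q_1(f,g)=\beta Q_2(Uf,Ug)$ together with $D(Q_2)=UD(Q_1)$, and then to extract the three pointwise relations by testing this identity against characteristic functions. The constant $\beta$ will be exactly the one produced by Lemma~\ref{intertwining in ell^2}; recall that this lemma gives $\aV{f}_{m_1}^2=\beta\aV{\ow U f}_{m_2}^2$ and, via Corollary~\ref{U unitary}, that $\sqrt\beta\,\ow U$ is unitary. Since the order isomorphism here already acts on $\ell^2$, it coincides with the map $\ow U$ of Lemma~\ref{intertwining in ell^2}, so polarization yields $\as{Uf,Ug}_{m_2}=\beta^{-1}\as{f,g}_{m_1}$ for all $f,g\in\ell^2(X_1,m_1)$.

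For the form identity I would use the description of a Dirichlet form through its semigroup recorded in Section~\ref{Graphs}, namely $Q(f,g)=\lim_{t\downarrow 0}\frac1t\as{f-e^{-tL}f,g}$, together with the fact that $f\in D(Q)$ exactly when the corresponding limit with $g=f$ is finite. Inserting $Uf,Ug$, replacing $e^{-tL_2}U$ by $Ue^{-tL_1}$ via the intertwining property, and then applying the scaled unitarity just recalled, one obtains
\[
Q_2(Uf,Ug)=\lim_{t\downarrow 0}\tfrac1t\as{U(f-e^{-tL_1}f),Ug}_{m_2}=\beta^{-1}\lim_{t\downarrow 0}\tfrac1t\as{f-e^{-tL_1}f,g}_{m_1}=\beta^{-1}Q_1(f,g).
\]
Taking $g=f$ shows that finiteness of the two limits is equivalent, which, together with the bijectivity of $U$, gives $D(Q_2)=UD(Q_1)$.

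The measure identity is then immediate: the definition of $\beta$ in Lemma~\ref{intertwining in ell^2}, read at $x=\tau(w)$, says $\beta=m_1(\tau(w))/(m_2(w)h(w)^2)$, i.e. $m_1(\tau(w))=\beta h(w)^2 m_2(w)$. For the two remaining identities I would first note that $U1_{\tau(w)}=h(w)1_w$, since $(U1_{\tau(w)})(y)=h(y)1_{\tau(w)}(\tau(y))$ and $\tau$ is a bijection. A direct computation with the explicit expression for $\QQ_i$ gives, for distinct $a,b$, that $Q_i(1_a,1_b)=-b_i(a,b)$ and, on the diagonal, $Q_i(1_a,1_a)=\sum_v b_i(a,v)+c_i(a)$; both are finite because $C_c(X_i)\subseteq D(Q_i)$. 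Inserting $f=1_{\tau(x)}$, $g=1_{\tau(y)}$ with $x\neq y$ into the form identity and using $U1_{\tau(x)}=h(x)1_x$ yields $-b_1(\tau(x),\tau(y))=\beta h(x)h(y)(-b_2(x,y))$, which is the edge-weight relation $b_1(\tau(x),\tau(y))=\beta h(x)h(y)b_2(x,y)$.

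Finally, for the killing term I would test the form identity on the diagonal with $f=g=1_{\tau(z)}$. The left side equals $\sum_v b_1(\tau(z),v)+c_1(\tau(z))$; rewriting the sum over $v\in X_1$ as a sum over $y\in X_2$ via $\tau$ and substituting the edge-weight relation turns it into $\beta h(z)\sum_y h(y)b_2(z,y)+c_1(\tau(z))$, while the right side equals $\beta h(z)^2\big(\sum_y b_2(z,y)+c_2(z)\big)$. Solving for $c_1(\tau(z))$ and regrouping gives
\[
c_1(\tau(z))=\beta h(z)\Big(\sum_y b_2(z,y)(h(z)-h(y))+c_2(z)h(z)\Big)=\beta h(z)\,\LL_2 h(z).
\]
Here I expect the only genuine technical obstacle to be justifying that $\LL_2 h$ is well defined, i.e. that $h\in\FF_{b_2}$: this follows because $\sum_v b_1(\tau(z),v)<\infty$ is part of the definition of the graph $(b_1,c_1)$ and, through the edge-weight relation, equals $\beta h(z)\sum_y h(y)b_2(z,y)$, forcing the latter sum to converge. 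With this absolute convergence in hand the rearrangement of the sums is legitimate, and the final identification is just the definition of the formal Laplacian $\LL_2$. The conceptual heart is the first step (the form identity), which rests on Corollary~\ref{U unitary}; the three pointwise relations are then essentially bookkeeping with characteristic functions.
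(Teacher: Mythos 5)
Your proposal is correct and follows essentially the same route as the paper: both rest on the scaled unitarity from Corollary~\ref{U unitary} (your polarized identity $\as{Uf,Ug}=\beta^{-1}\as{f,g}$ is the same as the paper's $U^*U=\beta^{-1}\mathrm{Id}$), derive the form identity from the semigroup characterization of $Q$, read off the measure relation from Lemma~\ref{intertwining in ell^2}, and obtain the edge-weight and killing-term relations by testing against characteristic functions. Your explicit justification that $h\in\FF_{b_2}$ (so that $\LL_2 h$ is defined) is a small point the paper leaves implicit, but otherwise the arguments coincide.
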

\begin{proof}
By Corollary~\ref{U unitary} there exists some $\beta >0$ such that the map $U$ satisfies
$$UU^* = \frac{1}{\beta}{\rm Id}_2 \text{ and }U^*U = \frac{1}{\beta} {\rm Id}_1,$$
where ${\rm Id}_i$ are the identity mappings on $\ell^2(X_i,m_i)$, $i=1,2$. Furthermore,  for this $\beta$ the proof of Lemma~\ref{intertwining in ell^2} shows
$m_1\circ \tau = \beta h^2 m_2$.

\smallskip

 We compute for $f \in D(Q_1)$
\begin{align*}
Q_1(f,f) &= \lim_{t\to 0} \frac{1}{t}\as{ f-e^{-tL_1}f,f}\\
&= \beta \lim_{t\to 0} \frac{1}{t}\as{ U^*Uf-e^{-tL_1}U^*Uf,f}\\
&= \beta \lim_{t\to 0} \frac{1}{t}\as{ Uf-e^{-tL_2}Uf,Uf}\\
&= \beta Q_2(Uf,Uf).
\end{align*}
This shows $Uf \in D(Q_2)$ and, therefore, $UD(Q_1) \subseteq D(Q_2)$. A similar computation with $U^{-1} = \beta U^*$ yields $UD(Q_1) = D(Q_2)$. The formula $ Q_1(f,g) = \beta Q_2(Uf,Ug)$ follows by polarization. For the statement on the graph structures we note that a short computation using Green's formula shows
$$Q_i(1_x,1_y) = \mathcal{L}_i 1_x (y) = \begin{cases} - b_i(x,y) , &\text{ if } x \neq y,\\
\sum_z b_i(x,z) + c_i(x), &\text{ if } x = y,\end{cases}$$
for $i = 1,2$. Thus, for $x,y\in X_2$ we may compute
\begin{align*}
b_1(\tau(x),\tau(y)) &= - Q_1(1_{\tau(x)},1_{\tau(y)})\\
 &= - \beta Q_2 (U1_{\tau(x)},U1_{\tau(y)}) \\
 &= - \beta h(x)h(y) Q_2(1_x,1_y)\\
 &= \beta h(x)h(y) b_2(x,y).
\end{align*}
Using this equality, we obtain for $z \in X_2$
\begin{align*}
c_1(\tau(z)) &= Q_1(1_{\tau(z)}) - \sum_{w \in X_1} b_1(\tau(z),w)\\
&= \beta  \left(h(z)^2 Q_2(1_z) - \sum_{v \in X_2} h(z)h(v) b_2(z,v) \right)\\
&= \beta h(z) \left(\sum_{v \in X_2} b(z,v)h(z) + c_2(z)h(z) - \sum_{v \in X_2} h(v) b_2(z,v) \right)\\
&= \beta h(z) \mathcal{L}_2 h (z).
\end{align*}
This finishes the proof.
\end{proof}
A relevant consequence  of the previous theorem is the following.
\begin{corollary}\label{degree}
For $i=1,2$, let $Q_i$ be a Dirichlet form associated to the graph $(b_i,c_i)$ over  $(X_i,m_i)$. Let $U\colon \ell^2 (X_1,m_1)\to \ell^2(X_2,m_2)$ be an order isomorphism intertwining $L_1 $ and $L_2$ with associated bijection $\tau$ and scaling $h$. Then
$${\rm Deg}_1(\tau(x)) = {\rm Deg}_2(x),$$
for all $x \in X_2$.
\end{corollary}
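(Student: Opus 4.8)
The plan is to prove the identity by direct substitution of the three structural relations supplied by Theorem~\ref{Gleichung} into the definition of the generalized degree, using that $\tau\colon X_2 \to X_1$ is a bijection to reindex sums over $X_1$ as sums over $X_2$. Recall that ${\rm Deg}_i(x) = \frac{1}{m_i(x)}\left(\sum_{y} b_i(x,y) + c_i(x)\right)$, so everything reduces to controlling the numerator $\sum_{w} b_1(\tau(x),w) + c_1(\tau(x))$ and the denominator $m_1(\tau(x))$ separately and then taking the quotient.

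First I would rewrite the edge-sum in the numerator of ${\rm Deg}_1(\tau(x))$. Since $\tau$ is a bijection, I replace $\sum_{w\in X_1} b_1(\tau(x),w)$ by $\sum_{y\in X_2} b_1(\tau(x),\tau(y))$ and apply the edge relation $b_1(\tau(x),\tau(y)) = \beta h(x) h(y) b_2(x,y)$, obtaining $\beta h(x)\sum_{y\in X_2} h(y) b_2(x,y)$.

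Next I would insert the killing relation $c_1(\tau(x)) = \beta h(x)\,\mathcal{L}_2 h(x)$ and expand the formal Laplacian as $\mathcal{L}_2 h(x) = \sum_{y} b_2(x,y)\bigl(h(x) - h(y)\bigr) + c_2(x) h(x)$. The crucial observation, and essentially the only step requiring care, is that the term $-\beta h(x)\sum_{y} b_2(x,y) h(y)$ arising from $\mathcal{L}_2 h$ exactly cancels the cross term $\beta h(x)\sum_{y} h(y) b_2(x,y)$ coming from the edge-sum. After this cancellation the numerator collapses to $\beta h(x)^2\bigl(\sum_{y} b_2(x,y) + c_2(x)\bigr)$.

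Finally I would divide by the measure relation $m_1(\tau(x)) = \beta h(x)^2 m_2(x)$ from Theorem~\ref{Gleichung}, whereupon the common factor $\beta h(x)^2$ cancels between numerator and denominator, leaving exactly $\frac{1}{m_2(x)}\bigl(\sum_{y} b_2(x,y) + c_2(x)\bigr) = {\rm Deg}_2(x)$. I do not anticipate any genuine obstacle here: the argument is a bookkeeping computation once Theorem~\ref{Gleichung} is in hand. The single point to watch is the cancellation of the $h(y)$-weighted cross terms, which is precisely the mechanism that makes the $h$-dependence (and hence the scaling ambiguity) disappear and produces a scaling-invariant quantity.
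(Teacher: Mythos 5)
Your proof is correct. It differs in mechanism from the paper's, although both are applications of Theorem~\ref{Gleichung}: you work with the three pointwise relations for $b_1$, $c_1$ and $m_1$ and carry out the cancellation of the $h(y)$-weighted cross terms by hand, whereas the paper first observes that the degree numerator is a form value, namely $\sum_y b_i(x,y) + c_i(x) = Q_i(1_x,1_x)$, and then applies the form-level identity $Q_1(f,g) = \beta\, Q_2(Uf,Ug)$ with $f = g = 1_{\tau(x)}$ and $U 1_{\tau(x)} = h(x) 1_x$, so the whole numerator transforms in a single step as $Q_1(1_{\tau(x)},1_{\tau(x)}) = \beta h(x)^2 Q_2(1_x,1_x)$; dividing by $m_1(\tau(x)) = \beta h(x)^2 m_2(x)$ then finishes. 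The paper's route is shorter and hides your cancellation inside the form identity; your route makes explicit the algebraic mechanism by which the $h$-dependence drops out, which is instructive. One small point you should make airtight: splitting $\beta h(x)\mathcal{L}_2 h(x)$ into separate sums and cancelling term by term requires $\sum_y b_2(x,y) h(y) < \infty$. This is available either because $h \in \mathcal{F}_{b_2,c_2}$ is implicit in the statement of Theorem~\ref{Gleichung} (the expression $\mathcal{L}_2 h$ appears there), or directly from summability of $b_1(\tau(x),\cdot)$ combined with the edge relation $b_1(\tau(x),\tau(y)) = \beta h(x) h(y) b_2(x,y)$ and positivity of $h$.
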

\begin{proof} Using the previous theorem we may compute
\begin{align*}
{\rm Deg}_1 (\tau(x)) &= \frac{1}{m_1(\tau(x))} \left(\sum_{y} b_1(\tau(x),y) + c_1(\tau(x)) \right) \\
&=  \frac{1}{m_1(\tau(x))} Q_1(1_{\tau(x)},1_{\tau(x)}) \\
&= \frac{\beta h(x)^2}{m_1(\tau(x))} Q_2(1_{x},1_{x}) \\
&= \frac{1}{m_2(x)} Q_2(1_{x},1_{x})\\
&= {\rm Deg}_2 (x). \qedhere
\end{align*}
\end{proof}

The statement of the previous theorem can be substantially strengthened if more information on the measures $m_1, m_2$ and / or the function $h$ is available. This is the content of later sections. Here, we derive next a special feature of $h$. We need one more definition.

\begin{definition}[(Super)Harmonic functions]
Let $(b,c)$  be a graph over $X$  with associated formal Laplacian $\LL$. A function $f\colon X\to\R$ is called \emph{superharmonic} if $f\in \mathcal{F}$ and $\LL f\geq 0$. It is called \emph{harmonic} if $f\in \mathcal{F}$ and $\LL f=0$.
\end{definition}

\textbf{Remark.} Note that a function $f$ is harmonic if and only if $\ow L  f = 0$ for any operator $\ow L$ of the form $\ow L   = \frac{1}{m} \LL$ with $m : X\to (0,\infty)$.

\smallskip

\begin{corollary}\label{h_harmonisch}
For $i=1,2$, let $Q_i$ be a Dirichlet form associated to the graph $(b_i,c_i)$ over  $(X_i,m_i)$.  Let $U\colon \ell^2 (X_1,m_1)\to \ell^2(X_2,m_2)$ be an order isomorphism intertwining $L_1 $ and $L_2$. Then the scaling  $h$  associated to $U$ is superharmonic. Furthermore, $h$ is harmonic if and only if $c_1 \equiv 0$.
\end{corollary}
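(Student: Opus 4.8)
The plan is to derive the entire statement directly from the weight and killing relations established in Theorem~\ref{Gleichung}. Recall that that theorem, applied to the intertwining order isomorphism $U$ with associated scaling $h$ and bijection $\tau$, furnishes a constant $\beta > 0$ together with the identity
$$c_1(\tau(z)) = \beta\, h(z)\, \mathcal{L}_2 h(z), \qquad z \in X_2,$$
where $\mathcal{L}_2$ is the formal Laplacian of $(b_2,c_2)$. Everything else is a matter of reading off signs.

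First I would note that $h$ belongs to the domain $\mathcal{F}_{b_2,c_2}$ of $\mathcal{L}_2$, so that $\mathcal{L}_2 h$ is genuinely defined pointwise; this is already implicit in Theorem~\ref{Gleichung}, but if one wants to spell it out it follows from the weight relation $b_1(\tau(x),\tau(y)) = \beta\, h(x) h(y) b_2(x,y)$ together with the summability of $b_1$: rewriting $\sum_y b_2(z,y) h(y) = (\beta h(z))^{-1} \sum_y b_1(\tau(z),\tau(y))$ and using that $\tau$ is a bijection shows the right-hand side is finite. Granting this, since $c_1 \geq 0$, $\beta > 0$ and $h(z) > 0$ for every $z \in X_2$, the displayed identity forces
$$\mathcal{L}_2 h(z) = \frac{c_1(\tau(z))}{\beta\, h(z)} \geq 0$$
for all $z \in X_2$, which is exactly the assertion that $h$ is superharmonic.

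For the second claim I would observe that $h$ is harmonic precisely when $\mathcal{L}_2 h(z) = 0$ for every $z \in X_2$. Since $\beta\, h(z) > 0$, the identity above shows this is equivalent to $c_1(\tau(z)) = 0$ for all $z \in X_2$, and because $\tau\colon X_2 \to X_1$ is a bijection, $\tau(z)$ ranges over all of $X_1$, so the condition is equivalent to $c_1 \equiv 0$. I do not expect a genuine obstacle here: the substance of the argument is entirely contained in Theorem~\ref{Gleichung}, and the only point requiring a moment's care is the well-definedness of $\mathcal{L}_2 h$ noted above.
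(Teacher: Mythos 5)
Your proposal is correct and takes essentially the same route as the paper: the paper's proof likewise reads the corollary off the identity $c_1(\tau(z)) = \beta\, h(z)\, \mathcal{L}_2 h(z)$ from Theorem~\ref{Gleichung}, using the strict positivity of $h$ and $\beta$ (and bijectivity of $\tau$ for the equivalence with $c_1 \equiv 0$). Your additional check that $h \in \mathcal{F}_{b_2,c_2}$, so that $\mathcal{L}_2 h$ is defined pointwise, is a detail the paper leaves implicit in the proof of Theorem~\ref{Gleichung}, but it does not alter the argument.
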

\begin{proof}
As $h$ is strictly positive, this is an immediate consequence of the formula
\[c_1(\tau(x)) = \beta h(x) \mathcal{L}_2h(x). \qedhere\]
\end{proof}

 \textbf{Remark.} In this section, we have assumed connectedness of the graphs in some places. If this condition is  dropped the statements still  remain valid on connected components of the graph. Thus, all our (subsequent)  results will  hold true on connected components. In this sense,   connectedness can always be replaced with the statement being true on connected components.

\section{A generalized ground state transform} \label{Ground state section}

In this section we prove a converse of Theorem~\ref{Gleichung} for regular forms, which will show that our result is optimal if we do not impose further restrictions on the underlying graph structures. For this purpose we introduce the notion of generalized ground state transforms with respect to a superharmonic function.

Consider a graph $(b,c)$ over $(X,m)$ with associated regular Dirichlet form $Q^{(D)}$. Furthermore, suppose $h$ is a strictly positive superharmonic function, $\tau : X \to X$ is a bijection and $\beta >0$. We define the graph $(b_h,c_h)$ over $(X,m_h)$ by setting
\begin{align*}
m_h(\tau(w)) &= \beta h(w)^2 m(w),\\
b_h(\tau(x),\tau(y)) &= \beta h(x)h(y) b(x,y), \\
c_h(\tau(z)) &= \beta h(z) \mathcal{L}h (z),
\end{align*}
hold for all $w,x,y,z\in X$.
\begin{definition} The regular Dirichlet form $Q_h = Q_{h,\tau,\beta}$ associated with $(b_h,c_h)$ over $(X,m_h)$ is called \emph{generalized ground state transform} of $Q^{(D)}$ with respect to to triplet $(h,\tau,\beta)$. Its associated operator is denoted by $L_h$.
\end{definition}

{\bf Remark.} If $h$ is harmonic, $\tau \equiv$ id and $\beta \equiv 1$ the above construction is known as ground state transform of $Q^{(D)}$, see e.g. \cite{FLW,HK} for more discussions on the topic.

\begin{lemma} \label{Ground state transform} Let $(b,c)$ be a graph over $(X,m)$ and let $Q_{h,\tau,\beta}$ be a generalized ground state transform of the corresponding regular Dirichlet form $Q^{(D)}$. Then, the order isomorphism $U: \ell^2(X,m_h) \to \ell^2(X,m)$, $f \mapsto h \cdot (f\circ \tau)$ satisfies $ U D(Q_h) = D(Q^{(D)})$ and
$$Q_h(f,g) = \beta Q^{(D)} (Uf,Ug),$$
for each $f,g \in D(Q_h)$.
\end{lemma}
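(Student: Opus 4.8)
The plan is to reduce the statement to the dense subspace $C_c(X)$, where both regular Dirichlet forms restrict to their generalized forms $\QQ = \QQ_{b,c}$ and $\QQ_h := \QQ_{b_h,c_h}$, to prove the relevant identity there, and then to extend by continuity. First I would observe that, as $\tau$ is a bijection and $h>0$, the operator $U$ restricts to a linear bijection of $C_c(X)$ onto itself, given by $(Uf)(x) = h(x)f(\tau(x))$. The entire argument then hinges on the single algebraic identity $\QQ_h(f) = \beta\,\QQ(Uf)$, valid for $f\in C_c(X)$, which is the generalized ground state transform identity in disguise.

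To establish it I would set $g := f\circ\tau$, so that $Uf = h\cdot g$, and insert this into $\QQ(Uf)$. The core is the pointwise identity
\begin{align*}
\frac12\sum_{x,y} b(x,y)\,|h(x)g(x)-h(y)g(y)|^2 &+ \sum_x c(x)\,h(x)^2 g(x)^2 \\
&= \frac12\sum_{x,y} b(x,y)\,h(x)h(y)\,|g(x)-g(y)|^2 + \sum_x h(x)\,(\LL h)(x)\,g(x)^2,
\end{align*}
obtained by expanding the squares, cancelling the common cross terms $-2\,b(x,y)h(x)h(y)g(x)g(y)$, exploiting the symmetry of $b$, and recognising $\sum_y b(x,y)(h(x)-h(y)) = (\LL h)(x) - c(x)h(x)$. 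All rearrangements are licit: $g$ has finite support, while $h\in\FF$ (as $h$ is superharmonic), so each inner sum over $y$ converges absolutely. Multiplying the right-hand side by $\beta$, inserting the defining relations $b_h(\tau(x),\tau(y)) = \beta h(x)h(y)b(x,y)$ and $c_h(\tau(z)) = \beta h(z)(\LL h)(z)$, and finally reindexing the outer sums through the bijection $\tau$, turns $\beta\,\QQ(Uf)$ precisely into $\QQ_h(f)$.

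Next I would combine this with the measure relation. Substituting $w = \tau^{-1}(\cdot)$ in $\aV{Uf}_m^2$ and using $m_h(\tau(w)) = \beta h(w)^2 m(w)$ gives $\aV{f}_{m_h}^2 = \beta\,\aV{Uf}_m^2$. Hence, for $f\in C_c(X)$,
\[
\aV{f}_{\QQ_h}^2 = \QQ_h(f) + \aV{f}_{m_h}^2 = \beta\big(\QQ(Uf) + \aV{Uf}_m^2\big) = \beta\,\aV{Uf}_{\QQ}^2,
\]
so $U|_{C_c(X)}$ is a $\sqrt{\beta}$-scaled isometry from $(C_c(X),\aV{\cdot}_{\QQ_h})$ onto $(C_c(X),\aV{\cdot}_{\QQ})$. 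As both $Q_h$ and $Q^{(D)}$ are regular, $C_c(X)$ is dense in $D(Q_h)$ and in $D(Q^{(D)})$ for the respective form norms, so this scaled isometry extends to a bijective scaled isometry between the completions $D(Q_h)$ and $D(Q^{(D)})$. Since the form norm dominates the $\ell^2$-norm and $U$ is $\ell^2$-bounded, the extension coincides with $U$ on $D(Q_h)\subseteq \ell^2(X,m_h)$; this yields $U D(Q_h) = D(Q^{(D)})$, and passing to the limit preserves $\aV{f}_{\QQ_h}^2 = \beta\,\aV{Uf}_{\QQ}^2$ on all of $D(Q_h)$. Polarization then delivers $Q_h(f,g) = \beta\, Q^{(D)}(Uf,Ug)$.

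The main obstacle is the ground state transform identity itself: one must track absolute convergence carefully while splitting $|h(x)g(x)-h(y)g(y)|^2$ and reassembling the pieces into $(\LL h)(x)$, and this is exactly where superharmonicity is used twice over — to ensure $h\in\FF$ (so that $\LL h$ and the relevant sums make sense) and to ensure $c_h\ge 0$ (so that $(b_h,c_h)$ is a genuine graph and $Q_h$ an honest Dirichlet form). Once this algebraic identity is in place, the passage to the full form domains is routine soft analysis resting only on regularity and density.
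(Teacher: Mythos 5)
Your proof is correct and takes essentially the same route as the paper: establish the identity on $C_c(X)$, where both regular forms are explicit, and then extend to the full form domains using regularity and the scaled-isometry/density argument, finishing by polarization. The only difference is cosmetic --- the paper verifies the identity on $C_c(X)$ by evaluating both forms on characteristic functions $1_x$, $1_y$ and invoking bilinearity, whereas you expand the quadratic form directly via the classical ground state transform computation (with the cross-term cancellation), and you spell out the density/extension step that the paper compresses into a single sentence.
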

\begin{proof}
For $f \in \ell^2(X,m_h)$ the definition of $m_h$ yields
 $$\beta \|Uf \|_{m}^2 = \|f \|_{m_{h}}^2.$$
For $x \in X$ we can compute
\begin{align*}
Q_h(1_x,1_x) &= \sum_{y\in X} b_h(x,y) + c_h(x) \\
&= \beta \sum_{y\in X} h(\tau^{-1}(x))h(y)b(\tau^{-1}(x),y) + \beta h(\tau^{-1}(x))\mathcal{L} h(\tau^{-1}(x))\\
&= \beta h(\tau^{-1}(x))^2\sum_{y\in X} b(\tau^{-1}(x),y) + \beta h(\tau^{-1}(x))^2 c (\tau^{-1}(x))\\
&= \beta Q^{(D)}(U1_x,U1_x).
\end{align*}
For two distinct $x,y \in X$, we obtain
\begin{align*}
\beta Q(U1_x,U1_y) &=  - \beta h(\tau^{-1}(x)) h(\tau^{-1}(y)) b(\tau^{-1}(x),\tau^{-1}(y))\\&= -b_h(x,y) = Q_h(1_x,1_y).
\end{align*}
Now the bilinearity of $Q^{(D)}$ and $Q_h$ as well as the denseness of $C_c(X)$ in the form domains show the statement.
\end{proof}

\begin{theorem} \label{theorem_gst} Let $(b,c)$ be a graph over $(X,m)$. Let $Q^{(D)}$ be the corresponding regular Dirichlet form with associated operator $L^{(D)}$ and let $Q_{h,\tau,\beta}$ be a generalized ground state transform of $Q^{(D)}$. Then, the order isomorphism $U: \ell^2(X,m_h) \to \ell^2(X,m)$, $f \mapsto h \cdot (f\circ \tau)$ intertwines $L_h$ and $L^{(D)}$.
\end{theorem}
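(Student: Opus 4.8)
The plan is to reduce the statement to assertion (ii) in the proposition characterising intertwining (the two equivalent conditions preceding Definition~\ref{def_intertwining_operators}), i.e.\ to verify $U D(L_h) = D(L^{(D)})$ together with $U L_h f = L^{(D)} U f$ for all $f \in D(L_h)$. I would obtain this by upgrading the form-level identity of Lemma~\ref{Ground state transform} to an identity of generators, the decisive abstract input being that a closed form determines its generator uniquely, so that a (rescaled) unitary transformation of one form into the other transforms the associated generators into each other, domains included.

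First I would recall from Lemma~\ref{Ground state transform} the two facts $\beta \|Uf\|_m^2 = \|f\|_{m_h}^2$ and $Q_h(f,g) = \beta Q^{(D)}(Uf,Ug)$ with $U D(Q_h) = D(Q^{(D)})$. Since $\tau$ is a bijection and $h$ is strictly positive, $U$ maps $C_c(X)$ onto $C_c(X)$; together with the scaling identity and the density of $C_c(X)$ in $\ell^2$, this shows exactly as in the proof of Lemma~\ref{intertwining in ell^2} that $W := \sqrt{\beta}\, U$ is a surjective isometry, hence a unitary map $\ell^2(X,m_h) \to \ell^2(X,m)$. Writing the form identity through $W$ clears the constant: for all $f,g \in D(Q_h)$,
$$Q_h(f,g) = \beta Q^{(D)}(Uf,Ug) = Q^{(D)}(Wf,Wg),$$
while $W D(Q_h) = D(Q^{(D)})$, so that $D(Q_h) = W^{-1} D(Q^{(D)})$.

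Next I would invoke the standard correspondence between closed forms and self-adjoint generators. The form $f,g \mapsto Q^{(D)}(Wf,Wg)$ on $\ell^2(X,m_h)$, with domain $W^{-1}D(Q^{(D)})$, is the unitary pullback of $Q^{(D)}$ along $W$, and its generator is $W^* L^{(D)} W$. By the displayed identity this pulled-back form coincides with $Q_h$ on the same domain, and the generator of $Q_h$ is $L_h$. Uniqueness of the generator of a closed form then forces $L_h = W^* L^{(D)} W$, which yields both $W D(L_h) = D(L^{(D)})$ and $W L_h f = L^{(D)} W f$ for $f \in D(L_h)$. As $W = \sqrt{\beta}\, U$ and scaling by a positive constant changes neither domains nor the intertwining relation, I conclude $U D(L_h) = D(L^{(D)})$ and $U L_h f = L^{(D)} U f$, which is assertion (ii); hence $U$ intertwines $L_h$ and $L^{(D)}$.

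The main obstacle is the passage from the form-level identity, valid on the dense form domain, to the genuine operator identity with matching operator domains: it is not enough to check $U L_h = L^{(D)} U$ on a dense subspace, one must establish the exact equality $U D(L_h) = D(L^{(D)})$. This is precisely where the uniqueness of the generator attached to a closed form is used, and applying it to the honest unitary $W$ rather than to $U$ directly is what lets the factor $\beta$ disappear cleanly.
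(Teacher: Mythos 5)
Your proposal is correct, and it rests on the same two pillars as the paper's own proof --- Lemma~\ref{Ground state transform} and the correspondence between closed forms and their generators --- but it deploys them differently. The paper works directly with $U$ and its adjoint: starting from $UU^* = \frac{1}{\beta}\,\mathrm{Id}$, it computes, for $f \in D(L_h)$ and $g \in D(Q^{(D)})$, that $\as{UL_h f, g} = \as{L_h f, U^* g} = Q_h(f, U^*g) = \beta\, Q^{(D)}(Uf, UU^*g) = Q^{(D)}(Uf, g)$, reads off from the weak characterization of the generator that $Uf \in D(L^{(D)})$ with $L^{(D)}Uf = UL_hf$, and then runs the symmetric argument with $U^{-1}$ to obtain the reverse domain inclusion. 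You instead renormalize to the genuine unitary $W = \sqrt{\beta}\,U$ and invoke the abstract principle that the unitary pullback of a closed form has as generator the unitary pullback of its generator, which yields the single operator identity $L_h = W^* L^{(D)} W$ and hence both domain inclusions at once. The trade-off: the paper's four-line calculation is self-contained and avoids setting up the pullback-form machinery (indeed, that calculation is essentially the proof, in this concrete instance, of the principle you cite), whereas your route is more conceptual --- it makes the role of Corollary~\ref{U unitary} (intertwining order isomorphisms are unitary up to scaling) explicit, treats the two domain inclusions symmetrically in one stroke, and isolates cleanly why the constant $\beta$ causes no trouble.
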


\begin{proof}
From the definitions it is obvious that $UU^* = \frac{1}{\beta} {\rm Id}$. Using the previous lemma we obtain for $f \in D(L_h)$ and $g \in D(Q^{(D)})$
\begin{align*}
\as{UL_h f,g} &= \as{L_hf, U^*g} \\
&= Q_h(f,U^*g) \\
&= \beta Q(Uf, UU^*g)\\
&= Q(Uf,g).
\end{align*}
This shows $Uf \in D(L^{(D)})$ and $L^{(D)}Uf = UL_hf$. A similar argument with $U^{-1}$ yields the inclusion $D(L^{(D)}) \subseteq D(L_h)$.
\end{proof}

{\bf Remark.} The previous theorem shows that the graph structure of a regular Dirichlet form is determined by its diffusion up to a generalized ground state transform. In particular, if the graph $(b,c)$ admits a strictly positive non-constant superharmonic function diffusion does not uniquely determine the graph.

\section{The map $\tau$ as an isometric graph isomorphism}\label{map}
In this section we use the material presented so far to obtain some results on how much  diffusion determines the graph in the general case. More specifically, the results below will show that graphs with Markovian semigroups which are equal  up to order isomorphism are isometric both with respect to combinatorial graph metric $d$ and to the intrinsic metric $\varrho$ defined via the generalized degree.
\medskip

\begin{theorem}[$\tau$ as isometry w.r.t. $d$] \label{main_weak_form}
For $i=1,2$, let $Q_i$ be a Dirichlet form associated to the graph $(b_i,c_i)$ over  $(X_i,m_i)$. Let $U\colon \ell^2 (X_1,m_1)\to \ell^2(X_2,m_2)$ be an order isomorphism intertwining $L_1 $ and $L_2$ with associated bijection $\tau$.
Then,  $b_1(\tau(y),\tau(y'))>0$ if and only if $b_2(y,y')>0$ holds. In particular, $(b_1,c_1)$ is connected if and only if $(b_2,c_2)$ is connected and   $\tau$ is an isometry with respect to the combinatorial graph metric in this case.
\end{theorem}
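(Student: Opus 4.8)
The plan is to derive everything from the edge-weight identity supplied by Theorem~\ref{Gleichung}. That theorem produces a constant $\beta>0$ and a strictly positive scaling $h$ with
\[
b_1(\tau(y),\tau(y')) = \beta\, h(y)\,h(y')\, b_2(y,y')
\]
for all $y,y'\in X_2$. Since every factor on the right is strictly positive apart from $b_2(y,y')$, this identity already forces the first assertion: $b_1(\tau(y),\tau(y'))>0$ exactly when $b_2(y,y')>0$. The one point requiring care is that Theorem~\ref{Gleichung} is stated under the hypothesis that $(b_1,c_1)$ be connected, whereas here no connectedness is assumed. I would remove this hypothesis as licensed by the remark closing Section~\ref{Structure}, reading the identity on each connected component. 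Concretely, for $y,y'$ lying in a common component of $(b_2,c_2)$ the identity holds verbatim; for $y,y'$ in distinct components one has $b_2(y,y')=0$, and I would exclude $b_1(\tau(y),\tau(y'))>0$ by applying the same reasoning to the inverse order isomorphism $U^{-1}$, which intertwines $L_2$ and $L_1$ with associated bijection $\tau^{-1}$. Indeed, were $\tau(y),\tau(y')$ neighbours in $(b_1,c_1)$ they would lie in one component there, and the identity for $U^{-1}$ on that component would return $b_2(y,y')>0$, a contradiction.

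With the edge equivalence established, the combinatorial statements are purely formal. A finite sequence $(y_0,\dots,y_n)$ of pairwise distinct vertices is a path in $(b_2,c_2)$, that is $b_2(y_{k-1},y_k)>0$ for all $k$, if and only if $(\tau(y_0),\dots,\tau(y_n))$ is a path in $(b_1,c_1)$, because $\tau$ is a bijection and hence preserves both distinctness of vertices and, by the first part, the edge relation in both directions. Running over all such sequences shows that a path joining $x$ and $y$ exists in one graph precisely when one joining their $\tau$-images exists in the other; hence $(b_1,c_1)$ is connected if and only if $(b_2,c_2)$ is connected.

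Finally, in the connected case I would note that $\gamma=(y_0,\dots,y_n)\mapsto(\tau(y_0),\dots,\tau(y_n))$ is a length-preserving bijection between the paths joining $y,y'$ in $(b_2,c_2)$ and those joining $\tau(y),\tau(y')$ in $(b_1,c_1)$, as $\sharp\gamma=n$ is unchanged. Taking the infimum of the edge counts over these two matched families yields $d_1(\tau(y),\tau(y'))=d_2(y,y')$, so $\tau$ is an isometry for the combinatorial graph metric. The only genuine obstacle in the whole argument is the bookkeeping for disconnected graphs in the first paragraph; once the edge equivalence is in place, everything else is elementary graph combinatorics.
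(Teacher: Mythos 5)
Your proof is correct and follows essentially the same route as the paper's: both read the identity $b_1(\tau(y),\tau(y'))=\beta\,h(y)h(y')\,b_2(y,y')$ of Theorem~\ref{Gleichung} on connected components (as licensed by the remark closing Section~\ref{Structure}), deduce the edge equivalence from strict positivity of $h$ and $\beta$, and then obtain connectedness and the isometry statement from the induced path correspondence. Your additional treatment of pairs lying in distinct components of $(b_2,c_2)$, via applying Theorem~\ref{Gleichung} to $U^{-1}$ (which is indeed an order isomorphism intertwining $L_2$ and $L_1$ with associated bijection $\tau^{-1}$), is a sound refinement of a point the paper leaves implicit in that remark.
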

\begin{proof}
By Theorem~\ref{Gleichung} we have
\begin{align*}
b_1(\tau(y),\tau(z)) &= \beta h(y)h(z)  b_2(y,z)
\end{align*}
for all $y,z$ in the same connected component of $X_2$. Now our first claim follows because $h,\beta$ are strictly positive.\\
Moreover, the equation shows that $(y_0,\ldots,y_n)$ is a path in $(X_2,b_2,c_2)$ if and only if $(\tau(y_0),\ldots,\tau(y_n))$ is a path in $(X_1,b_1,c_1)$ which implies that $(X_1,b_1,c_1)$ is connected if and only if $(X_2,b_2,c_2)$ is connected as well as  $d_1(\tau(y),\tau(z))=d_2(y,z)$ for all $y,z\in X_2$.
\end{proof}

The above theorem shows that graphs with semigroups  that are equivalent via  an intertwining order isomorphism have the same combinatorial structure.

We can also  obtain the following variant of  Theorem~\ref{main_weak_form}.

\begin{theorem} [$\tau$ as  isometry w.r.t. $\varrho$]\label{rho-invariant}
For $i=1,2$, let $Q_i$ be a Dirichlet form associated to the connected graph $(b_i,c_i)$ over  $(X_i,m_i)$. Furthermore, let $U\colon \ell^2 (X_1,m_1)\to \ell^2(X_2,m_2)$ be an order isomorphism intertwining $L_1 $ and $L_2$ with associated bijection $\tau$.
Then $\tau$ is  isometric with respect to the pseudo metrics $\varrho_1,\varrho_2$.
\end{theorem}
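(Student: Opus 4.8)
The goal is to show that the associated bijection $\tau \colon X_2 \to X_1$ is isometric for the pseudo metrics $\varrho_1, \varrho_2$ defined via the generalized degree. My plan is to reduce the statement to the invariance of the generalized degree, which is already available from Corollary~\ref{degree}. Recall that $\varrho_i$ is defined as an infimum over paths of sums of the quantities $\min\{\Deg_i(x_{k-1})^{-1/2}, \Deg_i(x_k)^{-1/2}\}$, so to compare $\varrho_1(\tau(x),\tau(y))$ with $\varrho_2(x,y)$ it suffices to show that (i) $\tau$ induces a bijection between the paths of $(b_2,c_2)$ and those of $(b_1,c_1)$, and (ii) $\tau$ preserves the generalized degree pointwise.

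First I would invoke Theorem~\ref{main_weak_form}, which establishes that $b_1(\tau(y),\tau(y'))>0$ if and only if $b_2(y,y')>0$. This immediately gives (i): a finite sequence $(x_0,\dots,x_n)$ is a path in $(X_2,b_2,c_2)$ precisely when $(\tau(x_0),\dots,\tau(x_n))$ is a path in $(X_1,b_1,c_1)$, and the map $\gamma \mapsto \tau\circ\gamma$ is a bijection between the set of paths connecting $x$ and $y$ in the second graph and the set of paths connecting $\tau(x)$ and $\tau(y)$ in the first graph. Next I would invoke Corollary~\ref{degree}, which gives exactly $\Deg_1(\tau(x)) = \Deg_2(x)$ for all $x\in X_2$, establishing (ii).

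Combining these, for any path $\gamma = (x_0,\dots,x_n)$ in the second graph, the corresponding summand transforms as
\begin{align*}
\min\left\{\frac{1}{\sqrt{\Deg_1(\tau(x_{k-1}))}}, \frac{1}{\sqrt{\Deg_1(\tau(x_k))}}\right\} = \min\left\{\frac{1}{\sqrt{\Deg_2(x_{k-1})}}, \frac{1}{\sqrt{\Deg_2(x_k)}}\right\},
\end{align*}
so the length of the image path $\tau\circ\gamma$ measured in $(b_1,c_1)$ equals the length of $\gamma$ measured in $(b_2,c_2)$. Since $\gamma\mapsto\tau\circ\gamma$ is a length-preserving bijection between the two path sets, taking the infimum over paths on both sides yields $\varrho_1(\tau(x),\tau(y)) = \varrho_2(x,y)$ for all $x,y\in X_2$, which is the claimed isometry.

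I expect no serious obstacle here, since both of the ingredients I rely on are already proven in the excerpt and the generalized degree appears inside the definition of $\varrho$ in a way that matches Corollary~\ref{degree} verbatim. The only point requiring mild care is the bookkeeping that $\tau$ really sets up a bijection of path sets with matching endpoints — this is where connectedness of both graphs (assumed in the hypotheses) guarantees that the infima are taken over nonempty families — but this is routine and follows directly from the neighbor-preserving property of Theorem~\ref{main_weak_form}.
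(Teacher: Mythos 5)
Your proof is correct and follows essentially the same route as the paper: the paper's own proof is a one-liner citing Corollary~\ref{degree} for preservation of the generalized degree and Theorem~\ref{Gleichung} (via the proof of Theorem~\ref{main_weak_form}) for preservation of the combinatorial path structure, which are exactly your ingredients (i) and (ii). You have merely spelled out the bookkeeping — that $\gamma\mapsto\tau\circ\gamma$ is a length-preserving bijection of path sets — which the paper leaves implicit.
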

\begin{proof}
This follows immediately from Corollary~\ref{degree} and
 Theorem~\ref{Gleichung} (compare proof of
 Theorem~\ref{main_weak_form} as well) since $\varrho_1$ and $\varrho_2$ only depend on the combinatorial structure (paths in the graph) and the vertex degrees.
\end{proof}

\section{Recurrent graphs}\label{Recurrent}
In this section we will study a class of graphs with the property
that every positive (super)harmonic function must be constant. This
will allow us to give a generalization of
Theorem~\ref{main_weak_form}. The results of this section are
essentially all known. For the convenience of the reader we include
some proofs.

\bigskip

\begin{definition}[Recurrence and transience]
A Markovian semigroup \\ $(e^{- t L} )_{t\geq 0}$ on $\ell^2(X,m)$ is called recurrent if
\begin{align*}
G(x,y):=\int_0^\infty e^{-t L}  1_x(y)\,dt=\infty
\end{align*}
for all $x,y\in X$.
It is called transient if $G(x,y)<\infty$ for all $x,y\in X$. A graph over $(X,m)$ is called recurrent/transient if the semigroup $(e^{-tL^{(D)}})_{t\geq 0}$ of the associated Dirichlet Laplacian $L^{(D)}$ is recurrent/transient.
\end{definition}

For a connected graph, the semigroup $(e^{-tL^{(D)}})_{t\geq 0}$
is irreducible (i.e., the semigroup maps non-negative, non-zero functions  to strictly positive ones) by \cite{HKLW} and, hence, the graph is either recurrent or transient \cite{FOT} (see \cite{Schm}, Proposition~3.2) as well.


The following lemma is rather easy to prove and  well-known in an
even more general context  \cite{FOT}. Thus, we refrain from giving
a proof.

\begin{lemma}
A Markovian semigroup $(e^{-t L})_{t\geq 0}$ on $\ell^2(X,m)$ is recurrent if and only if
\begin{align*}
(Gf)(y):=\int_0^\infty e^{-t L} f(y)\,dt=\infty
\end{align*}
for all $f\in\ell^2(X,m)$ with  $f\geq 0$ and $f\neq 0$ and all $y\in X$.
\end{lemma}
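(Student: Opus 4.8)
The plan is to prove both implications by comparing an arbitrary nonnegative function with a suitable multiple of a point mass $1_{x}$ and then exploiting positivity of the Markovian semigroup. The backward implication is essentially a specialization of the hypothesis, while the forward one is a one-line monotonicity estimate. The whole argument hinges on the fact that a Markovian semigroup is positivity preserving, hence order monotone, so that pointwise inequalities between initial data persist after applying $e^{-tL}$ and survive integration in $t$.

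First I would treat the implication that the stated condition forces recurrence. Here I would simply test the hypothesis on $f=1_{x}$. Since $m$ has full support, $m(x)\in(0,\infty)$, so $1_{x}\in\ell^2(X,m)$, and of course $1_{x}\geq 0$ and $1_{x}\neq 0$. The assumption then gives $G(x,y)=(G1_{x})(y)=\infty$ for all $y\in X$, and as $x\in X$ was arbitrary this is precisely the definition of recurrence.

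For the converse I would assume recurrence, fix a nonzero $f\geq 0$ in $\ell^2(X,m)$ and a vertex $y\in X$, and choose $x_0\in X$ with $f(x_0)>0$. Then pointwise $f\geq f(x_0)\,1_{x_0}\geq 0$. Using that the Markovian semigroup is positivity preserving (the first Beurling--Deny criterion), hence monotone, applying $e^{-tL}$ to $f-f(x_0)1_{x_0}\geq 0$ yields $e^{-tL}f\geq f(x_0)\,e^{-tL}1_{x_0}\geq 0$ for every $t\geq 0$. Since $t\mapsto e^{-tL}f(y)$ is nonnegative and continuous, hence measurable, integrating this inequality over $[0,\infty)$ gives
$$(Gf)(y)\geq f(x_0)\int_0^\infty e^{-tL}1_{x_0}(y)\,dt=f(x_0)\,G(x_0,y)=\infty,$$
because $G(x_0,y)=\infty$ by recurrence and $f(x_0)>0$. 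As $f$ and $y$ were arbitrary, this is the desired conclusion.

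I do not expect any genuine obstacle here; the statement is easy, as the authors note. The only points requiring (routine) care are the monotonicity of the Markovian semigroup, which is exactly the positivity-preserving property guaranteed by the Dirichlet form structure, and the legitimacy of integrating the pointwise inequality in $t$, which follows from nonnegativity of the integrands. I would also stress that no connectedness assumption enters: recurrence as defined already asserts $G(x,y)=\infty$ for every pair $(x,y)$, so the comparison $f\geq f(x_0)1_{x_0}$ together with positivity is all that is needed.
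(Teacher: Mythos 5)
Your proof is correct. Note that the paper gives no proof of this lemma at all --- it states that the result is ``rather easy to prove and well-known in an even more general context'' and defers to \cite{FOT} --- so there is nothing to compare against; your argument (testing on $f=1_x$ for one direction, and for the other comparing $f\geq f(x_0)1_{x_0}$ and using positivity preservation of the Markovian semigroup together with integration of the pointwise inequality) is exactly the standard argument the authors are alluding to, with the routine points of care (monotonicity of the semigroup, measurability of $t\mapsto e^{-tL}f(y)$) handled properly.
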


As a corollary we can easily obtain that recurrence is stable  under
order isomorphisms. Again, we omit the simple proof.

\begin{corollary} \label{stability_recurrence}
For $i=1,2$, let $Q_i$ be a Dirichlet form associated to the graph $(b_i,c_i)$ over  $(X_i,m_i)$ and let $L_i$ be the corresponding Laplacian.
Let $U: \ell^2(X_1,m_1)\to\ell^2(X_2,m_2)$ be an order isomorphism intertwining the Laplacians. Then, $(e^{-tL_1})_{t\geq 0}$ is recurrent if and only if $(e^{-tL_2})_{t\geq 0}$ is recurrent.
\end{corollary}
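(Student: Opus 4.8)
The plan is to transport the defining integral for recurrence across $U$ by means of its explicit pointwise form. Since the intertwining relation $U e^{-tL_1}=e^{-tL_2}U$ passes to $U^{-1}$ (which is again an order isomorphism, now intertwining $L_2$ and $L_1$), the statement is symmetric in the two graphs, and it suffices to prove one implication, say that recurrence of $(e^{-tL_1})_{t\ge 0}$ forces recurrence of $(e^{-tL_2})_{t\ge 0}$; the converse then follows by exchanging the roles of $1$ and $2$. By Theorem~\ref{orderiso} I may write $Uf=h\cdot(f\circ\tau)$ with $h\colon X_2\to(0,\infty)$ and $\tau\colon X_2\to X_1$ a bijection, so that $(U\phi)(y)=h(y)\,\phi(\tau(y))$, and I will combine this with the recurrence criterion of the Lemma above.

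For the main implication, fix $f\in\ell^2(X_2,m_2)$ with $f\ge 0$, $f\neq 0$, and fix $y\in X_2$. Put $g:=U^{-1}f$; since $U^{-1}$ is an order isomorphism, $g\ge 0$, and by injectivity $g\neq 0$. Writing $e^{-tL_2}=U e^{-tL_1}U^{-1}$ and using the pointwise form of $U$, for every $t\ge 0$
$$e^{-tL_2}f(y)=\big(U e^{-tL_1}g\big)(y)=h(y)\,\big(e^{-tL_1}g\big)(\tau(y)).$$
Every term here is non-negative, so integrating in $t$ and pulling out the constant $h(y)$ gives
$$(G_2 f)(y)=\int_0^\infty e^{-tL_2}f(y)\,dt=h(y)\int_0^\infty \big(e^{-tL_1}g\big)(\tau(y))\,dt=h(y)\,(G_1 g)(\tau(y)).$$

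Since $(e^{-tL_1})_{t\ge 0}$ is recurrent and $g\ge 0$, $g\neq 0$, the Lemma above yields $(G_1 g)(\tau(y))=\infty$; as $h(y)>0$ we conclude $(G_2 f)(y)=\infty$. Because $f$ and $y$ were arbitrary, the same Lemma shows $(e^{-tL_2})_{t\ge 0}$ is recurrent. There is essentially no obstacle here: the only points requiring care are that $U^{-1}$ preserves the conditions $g\ge 0$ and $g\neq 0$ needed to invoke the Lemma, and that non-negativity of the integrand lets one read $h(y)\cdot\infty=\infty$ directly off the displayed identity. One could equally bypass the Lemma and argue straight from the definition of recurrence, using $U^{-1}1_x=h(x)^{-1}1_{\tau(x)}$ to obtain $G_2(x,y)=\tfrac{h(y)}{h(x)}\,G_1(\tau(x),\tau(y))$ and then invoking recurrence of $L_1$ pointwise.
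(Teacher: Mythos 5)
Your proof is correct and is precisely the ``simple proof'' the paper omits: the corollary is positioned as an immediate consequence of the preceding (unnumbered) lemma characterizing recurrence via $(Gf)(y)=\int_0^\infty e^{-tL}f(y)\,dt=\infty$, and your argument --- combining that lemma with the pointwise form $Uf=h\cdot(f\circ\tau)$ from Theorem~\ref{orderiso} and the intertwining relation $e^{-tL_2}=Ue^{-tL_1}U^{-1}$ --- is exactly the intended route. The two points you flag (that $g=U^{-1}f$ remains non-negative and non-zero, and that positivity of the integrand justifies reading off $h(y)\cdot\infty=\infty$) are indeed the only things to check, and you handle them correctly.
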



For later use we recall the following criterion for recurrence (see e.g. \cite{FOT,Soa}).

\begin{theorem}\label{characterization-Soardi} Let $(b,c)$ be a connected graph over $(X,m)$. Choose  $o\in X$ arbitrary and define the norm  $\|\cdot\|_o$ on $\DD$   by
    $$\|f\|_o = \sqrt{\QQ (f,f) + |f(o)|^2}.$$
    Then, the following assertions  are equivalent:

\begin{itemize}
\item[(i)] The graph is recurrent.

\item[(ii)] The closure of $C_c (X)$ with respect to $\|\cdot\|_o$   contains the constant function $1$ and $\QQ(1,1) = 0$.
\end{itemize}
\end{theorem}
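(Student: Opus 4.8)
The plan is to reduce the statement to the classical functional-analytic characterization of recurrence in terms of the extended Dirichlet space, and then to translate between that space and the $\|\cdot\|_o$-closure of $C_c(X)$. Throughout I write $\mathcal{H}_o$ for the closure of $C_c(X)$ with respect to $\|\cdot\|_o$, so that condition (ii) reads: $1\in\mathcal{H}_o$ and $\QQ(1,1)=0$.

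First I would record that point evaluations are $\|\cdot\|_o$-continuous; this is the one place where connectedness enters. Fix $x\in X$ and, using connectedness, choose a path $(x_0,\dots,x_n)$ from $o=x_0$ to $x=x_n$. Since $b(x_{k-1},x_k)\,|f(x_{k-1})-f(x_k)|^2\le \QQ(f,f)$ for every $f\in\DD$, telescoping yields a constant $C_x>0$, depending only on the path, with $|f(x)-f(o)|\le C_x\,\QQ(f,f)^{1/2}$, and hence $|f(x)|\le (1+C_x)\|f\|_o$. Consequently $\|\cdot\|_o$-convergence forces pointwise convergence; moreover the $\|\cdot\|_o$-limit of a Cauchy sequence has finite energy by lower semicontinuity of $\QQ$ under pointwise limits, so $\mathcal{H}_o$ may be regarded as a subspace of $\DD\subseteq C(X)$ on which evaluation at any vertex is continuous.

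Next I would identify $\mathcal{H}_o$ with the extended Dirichlet space $\mathcal{F}_e$ of the regular form $Q^{(D)}$, that is, the set of $f\in C(X)$ admitting a $\QQ$-Cauchy sequence in $C_c(X)$ converging to $f$ pointwise. The inclusion $\mathcal{H}_o\subseteq\mathcal{F}_e$ is immediate from the previous step. Conversely, if $(f_n)\subseteq C_c(X)$ is $\QQ$-Cauchy with $f_n\to f$ pointwise, then in particular $f_n(o)\to f(o)$, so $(f_n)$ is $\|\cdot\|_o$-Cauchy and $f\in\mathcal{H}_o$. Thus $1\in\mathcal{H}_o$ if and only if $1\in\mathcal{F}_e$, and condition (ii) is exactly the pair $1\in\mathcal{F}_e$ and $\QQ(1,1)=0$.

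It remains to match this with recurrence, and here I would invoke the classical criterion (e.g. \cite[Theorem~1.6.3]{FOT}, or \cite{Soa}): for an irreducible Dirichlet form, recurrence in the sense $G\equiv\infty$ is equivalent to $1\in\mathcal{F}_e$ together with $\QQ(1,1)=0$; irreducibility is guaranteed by connectedness. The passage between this reformulation and the shape of (ii) can be made elementary using normal contractions: given $v_n\in C_c(X)$ with $v_n\to 1$ in $\|\cdot\|_o$, the truncations $u_n=(v_n\vee 0)\wedge 1\in C_c(X)$ satisfy $0\le u_n\le 1$, converge to $1$ pointwise, and, by compatibility of $\QQ$ with normal contractions together with $\QQ(v_n,v_n)\to\QQ(1,1)=0$, also satisfy $\QQ(u_n,u_n)\to 0$; this is precisely the energy-vanishing approximating sequence that witnesses recurrence. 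I expect the genuine obstacle to lie exactly in this last equivalence, namely in relating the definition $G(x,y)=\infty$ to the existence of such cut-off functions. Since Section~\ref{Recurrent} explicitly defers the underlying recurrence theory to \cite{FOT,Soa}, I would cite that result here rather than reproduce the full potential-theoretic argument.
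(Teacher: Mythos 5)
Your proposal is correct and is in substance the same as the paper's treatment: the paper gives no proof of this theorem at all, but explicitly ``recalls'' it from \cite{FOT,Soa}, and your argument is exactly a careful reduction to that cited criterion (Theorem~1.6.3 of \cite{FOT}), with the necessary bridge supplied by identifying the $\|\cdot\|_o$-closure of $C_c(X)$ with the extended Dirichlet space of $Q^{(D)}$ via path-telescoping continuity of point evaluations, lower semicontinuity of $\QQ$ under pointwise limits, and the normal-contraction truncation. Deferring the remaining potential-theoretic core --- matching the pointwise definition $G(x,y)=\infty$ with the notion of recurrence used in \cite{FOT}, which for connected (hence irreducible) graphs is the standard dichotomy --- is consistent with the level of detail the paper itself chooses here.
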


\textbf{Remark.} Note that the theorem gives, in particular, that recurrence does not depend on the choice of the measure $m$.  Hence, from now on, we will call a graph recurrent/transient without referring to the measure $m$.
\medskip

The previous theorem yields the following lemma (cf. \cite{Schm}, Proposition~3.12) which justifies that we will assume $c=0$ in the following.
\begin{lemma}
If $(b,c)$ is a connected graph over $(X,m)$ with $c\neq 0$  then it is transient.
\end{lemma}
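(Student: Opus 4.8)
The plan is to apply the recurrence criterion of Theorem~\ref{characterization-Soardi} and to show that its condition (ii) must fail as soon as $c\neq 0$. First I would recall that for a connected graph the semigroup $(e^{-tL^{(D)}})_{t\geq 0}$ is irreducible, so the dichotomy noted after the definition of recurrence applies: the graph is either recurrent or transient. Hence it suffices to rule out recurrence.

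The decisive computation is to evaluate the generalized form on the constant function $1$. Directly from the definition of $\QQ$,
\[
\QQ(1,1) = \frac{1}{2}\sum_{x,y\in X} b(x,y)\,|1-1|^2 + \sum_{x\in X} c(x)\,|1|^2 = \sum_{x\in X} c(x),
\]
since the edge term vanishes on constants. Because $c\colon X\to[0,\infty)$ and $c\neq 0$, there is some $x_0\in X$ with $c(x_0)>0$, whence $\QQ(1,1)\geq c(x_0)>0$ (the sum may even be $+\infty$, but in any case it is strictly positive).

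By Theorem~\ref{characterization-Soardi}, recurrence would force $\QQ(1,1)=0$. Since we have just seen that $\QQ(1,1)>0$, the graph cannot be recurrent, and by the recurrence/transience dichotomy it is therefore transient, as claimed.

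I expect no genuine obstacle here: the whole argument rests on the single observation that the constant function is annihilated by the edge part of $\QQ$ but picks up the full mass $\sum_{x} c(x)$ of the killing term, so a nontrivial killing term immediately obstructs the equality $\QQ(1,1)=0$ that characterizes recurrence.
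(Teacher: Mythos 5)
Your proof is correct and follows exactly the route the paper intends: the paper omits the argument, saying only that the lemma follows from Theorem~\ref{characterization-Soardi} (citing \cite{Schm}, Proposition~3.12), and your computation $\QQ(1,1)=\sum_{x}c(x)>0$ together with the recurrence/transience dichotomy for connected graphs is precisely the natural filling-in of that one-line justification.
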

\medskip

From Theorem~\ref{characterization-Soardi}, we obtain the following
characterization of recurrence (see  \cite{Woe}, Theorem 6.21 as
well).

\begin{proposition}\label{kernel Laplacian recurrent}
Let be $(b,0)$ be  a connected graph over $(X,m)$.  Then the
following assertions are equivalent:

\begin{itemize}

\item[(i)] The graph is recurrent.

\item[(ii)] All positive superharmonic functions are constant.

\end{itemize}
\end{proposition}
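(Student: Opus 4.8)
The plan is to establish the two implications separately. For $(i)\Rightarrow(ii)$ I will use the characterization of recurrence from Theorem~\ref{characterization-Soardi} together with a cut-off (Caccioppoli-type) energy estimate, while for the reverse implication I will argue by contraposition and produce a non-constant positive superharmonic function from the Green function of a transient graph. Throughout $c\equiv 0$, so $\LL f(x)=\sum_{y}b(x,y)(f(x)-f(y))$.

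Assume first that the graph is recurrent. By Theorem~\ref{characterization-Soardi} there are $\phi_n\in C_c(X)$ with $\phi_n\to 1$ in $\|\cdot\|_o$; applying the normal contraction $t\mapsto 0\vee t\wedge 1$ (which does not increase $\QQ$) I may assume $0\le\phi_n\le 1$, and then $\QQ(\phi_n)\to\QQ(1,1)=0$ while $\phi_n(o)\to 1$. Since $\QQ(\phi_n-1)\to 0$ controls the increments of $\phi_n$ along every edge, connectedness gives $\phi_n\to 1$ pointwise. Now let $u\ge 0$ be superharmonic and fix $k>0$. A short case distinction at vertices with $u(x)\le k$ and with $u(x)>k$ shows that $w:=u\wedge k$ is again superharmonic (the minimum of the superharmonic $u$ and the harmonic constant $k$); it is bounded, hence lies in $\FF$. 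As $w\phi_n^2\in C_c(X)$ and $\LL w\ge 0$, Green's formula yields $\QQ(w,w\phi_n^2)=\sum_x(\LL w)(x)\,w(x)\,\phi_n(x)^2\ge 0$. To turn this into an energy bound, write $w(x)\phi_n(x)^2-w(y)\phi_n(y)^2=\phi_n(x)^2(w(x)-w(y))+w(y)(\phi_n(x)-\phi_n(y))(\phi_n(x)+\phi_n(y))$, which splits $\QQ(w,w\phi_n^2)$ into the localized energy $E_n:=\tfrac12\sum_{x,y}b(x,y)\,\phi_n(x)^2(w(x)-w(y))^2$ and an error term. Using $0\le w\le k$, $0\le\phi_n\le1$ and the inequality $2st\le \varepsilon s^2+\varepsilon^{-1}t^2$, the (symmetrized) error term is absorbed into $\tfrac12 E_n$ plus a multiple of $\QQ(\phi_n)$; together with $\QQ(w,w\phi_n^2)\ge 0$ this gives $E_n\le 4k^2\,\QQ(\phi_n)\to 0$. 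Since all summands are non-negative and $\phi_n\to 1$ pointwise, Fatou's lemma yields $\QQ(w)\le\liminf_n E_n=0$, so $w=u\wedge k$ is constant by connectedness. As this holds for every $k>0$, the function $u$ itself is constant, which proves $(ii)$.

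For the converse I argue by contraposition. If the graph is not recurrent it is transient (a connected graph is one or the other), so $g:=G(o,\cdot)=\int_0^\infty e^{-tL^{(D)}}1_o\,dt$ is finite everywhere, and it is strictly positive because the semigroup is positivity improving on the connected graph. Differentiating under the integral gives $L^{(D)}g=1_o-\lim_{t\to\infty}e^{-tL^{(D)}}1_o=1_o$, whence $\LL g=m\,L^{(D)}g=m(o)1_o\ge 0$ and $g\in\FF$; thus $g$ is a positive superharmonic function. It cannot be constant, for a constant has $\LL g\equiv 0$, contradicting $\LL g=m(o)1_o\ne 0$. Hence $(ii)$ fails, and the contrapositive is proven.

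I expect the Caccioppoli estimate to be the main obstacle: one has to carry out the Leibniz expansion, symmetrize in $x$ and $y$ so that the localized energy appears with the right sign, and choose $\varepsilon$ (depending on $k$ but not on $n$) so that the cross term is genuinely absorbed. The remaining ingredients are routine, namely the appeal to Theorem~\ref{characterization-Soardi}, the verification that $u\wedge k$ is superharmonic, the pointwise convergence $\phi_n\to 1$, and the identity $L^{(D)}G(o,\cdot)=1_o$.
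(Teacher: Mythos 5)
In the direction (i)$\Rightarrow$(ii) there is a genuine gap: your Caccioppoli estimate points the wrong way. For the truncation $w=u\wedge k$, which is \emph{super}harmonic, Green's formula gives the \emph{lower} bound $\QQ(w,w\phi_n^2)=\sum_x(\LL w)(x)\,w(x)\,\phi_n(x)^2\geq 0$, i.e.\ $E_n+R_n\geq 0$ in your notation. Combined with the absorption $|R_n|\leq\tfrac12 E_n+Ck^2\QQ(\phi_n)$, this yields only $\tfrac32 E_n\geq -Ck^2\QQ(\phi_n)$, which is vacuous; it cannot yield $E_n\leq 4k^2\QQ(\phi_n)$. (Concretely, the numbers $E_n=100$, $R_n=0$, $k^2\QQ(\phi_n)=10^{-2}$ satisfy both premises and violate the conclusion, so the inference is a non sequitur.) A Caccioppoli inequality needs $\QQ(w,w\phi_n^2)\leq 0$, i.e.\ \emph{sub}harmonicity of the localized function, and no amount of symmetrization changes this: you need an upper bound on $\QQ(w,w\phi_n^2)$, and superharmonicity only supplies a lower one. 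The step is repairable: run the identical computation for $v:=k-w=k-u\wedge k$, which is non-negative, bounded and subharmonic ($\LL v=-\LL w\leq 0$), so that Green's formula gives $\QQ(v,v\phi_n^2)\leq 0$; the absorption then genuinely gives that the localized energy of $v$ is at most $Ck^2\QQ(\phi_n)\to 0$, and since $(v(x)-v(y))^2=(w(x)-w(y))^2$, Fatou yields $\QQ(w)=\QQ(v)=0$, whence $w$, and then $u$, is constant. For comparison, the paper avoids truncation entirely: it feeds the (strictly) positive superharmonic function $h$ into the generalized ground state transform, uses the identity $\QQ_h(\phi/h,\phi/h)=\QQ(\phi,\phi)$ for $\phi\in C_c(X)$ from Lemma~\ref{Ground state transform}, and applies Fatou along $\phi_n\to 1$ to get $\QQ_h(1/h,1/h)=0$; that identity plays exactly the role your Caccioppoli estimate was meant to play, with the correct sign built in (the killing term $c_h\geq 0$ absorbs the superharmonicity).

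Your direction (ii)$\Rightarrow$(i) is sound and genuinely different from the paper's: you take the Green function of a transient graph as a non-constant positive superharmonic function, whereas the paper argues variationally, minimizing $\|\cdot\|_o^2$ over $\{u\in\DD_o : u(o)\geq 1\}$ (with $\DD_o$ the $\|\cdot\|_o$-closure of $C_c(X)$), proving superharmonicity of the minimizer by a perturbation argument and non-constancy from the fact that constants are not in $\DD_o$. Your route is classical, but two points need proof rather than assertion. First, $e^{-tL^{(D)}}1_o\to 0$ pointwise as $t\to\infty$ is not automatic: it holds because $e^{-tL^{(D)}}1_o$ converges to the projection of $1_o$ onto $\ker L^{(D)}$, and this kernel is trivial in the transient case (a nonzero kernel element has vanishing energy, hence is constant by connectedness, which puts $1$ in $\DD_o$ with $\QQ(1,1)=0$ and forces recurrence by Theorem~\ref{characterization-Soardi}). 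Second, $g=G(o,\cdot)$ need not lie in $\ell^2(X,m)$, so the expression $L^{(D)}g=1_o$ is not literally meaningful; one should instead verify $g\in\FF$ and $\LL g=m(o)1_o\geq 0$ by monotone convergence from $g_T=\int_0^T e^{-tL^{(D)}}1_o\,dt$, for which $\LL g_T=m\cdot\bigl(1_o-e^{-TL^{(D)}}1_o\bigr)$.
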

\begin{proof} (i) $\Longrightarrow$ (ii): Let $\QQ$ be the generalized form associated to the graph $(b,0)$ and let $h$ be a positive superharmonic function. As $(b,0)$ is connected, $h$ is strictly positive (see e.g. Proposition 3.4 in \cite{HK}). We denote by $\QQ_h$ the generalized form corresponding to the graph of the ground state transform $(b_h,c_h)$ which was introduced in Section~\ref{Ground state section}.  Then, Lemma~\ref{Ground state transform}
yields
$$\QQ_h \left(\frac{\varphi}{h},\frac{\varphi}{h}\right) = \QQ (\varphi,\varphi)$$
for all $\varphi \in C_c (X)$.  By Theorem~\ref{characterization-Soardi}, we can now find a sequence $(\varphi_n)$ in $C_c (X)$ converging to $1$ with respect to $\|\cdot\|_o$. As discussed in \cite{Soa} (see e.g. \cite{GHKLW} as well) convergence with respect to $\|\cdot\|_o$ implies pointwise convergence. Thus, we can apply the Fatou lemma to the above equality between $\QQ_u$ and $\QQ$ to obtain that
$$0 \leq \QQ_h \left(\frac{1}{h},\frac{1}{h}\right) = \QQ (1,1)  =0.$$
By connectedness of the graph, this  gives that $\frac{1}{h}$  is constant. Hence, $h$ is constant.

\smallskip

(ii)$\Longrightarrow$ (i):  Assume $(b,0)$ is not recurrent. We let $\DD_o = \overline{C_c(X)}^{\|\cdot\|_o}$. By standard Hilbert space arguments there exists a unique minimizer $h$ of the quadratic functional $\|\cdot \|^2_o$ on the closed convex set $\{u \in \DD_o: u(o) \geq 1\}.$ As $\|(0\vee h) \wedge1\|_o \leq \|h\|_o$, the function $h$ is positive and satisfies $h(o) = 1$. We now show that $h$ is superharmonic. For $x\in X$ and $\varepsilon > 0$, we obtain
\begin{align*}
\QQ(h,h) &= \|h\|_o - 1 \\&\leq \|(h + \varepsilon 1_x) \wedge1 \|_o - 1\\
 &= \QQ((h + \varepsilon 1_x) \wedge1 ,(h + \varepsilon1_x) \wedge1 )\\ &\leq \QQ(h + \varepsilon 1_x,h + \varepsilon 1_x)\\
 &= \QQ(h,h) + 2 \varepsilon \mathcal{L}h(x) + \varepsilon^2 \QQ(1_x,1_x).
\end{align*}
As this holds for all $\varepsilon >0$, we infer $\mathcal{L} h (x)
\geq 0$ for all $x\in X$. As the graph is not recurrent the constant
functions do not belong to $\DD_o$, hence, $h$ is not constant.
\end{proof}

Finally, we note the following (see \cite{Kuw}, Theorem 6.2 as well).

\begin{proposition}\label{regular} If the graph  $(b,0)$ over $(X,m)$ is recurrent, then the associated Dirichlet forms $Q^{(D)}$ and $Q^{(N)}$ agree.  In particular, there is then only one Dirichlet form associated to the graph.
\end{proposition}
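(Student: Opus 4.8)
The plan is to turn the claim into a single density statement and then feed in the recurrence characterization of Theorem~\ref{characterization-Soardi}. Since both $Q^{(D)}$ and $Q^{(N)}$ are forms associated to the graph, the characterization recalled after its definition gives $Q^{(D)}\subseteq Q^{(N)}$; equivalently, $D(Q^{(D)})\subseteq D(Q^{(N)})$ and the two forms agree there (both equal $\QQ$). Concretely, $\aV{\cdot}_{\QQ}$ is precisely the graph norm of the closed form $Q^{(N)}$, so $D(Q^{(N)})$ is $\aV{\cdot}_{\QQ}$-complete and contains $C_{c}(X)$, hence contains $D(Q^{(D)})=\overline{C_{c}(X)}^{\aV{\cdot}_{\QQ}}$. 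Thus it suffices to prove the reverse inclusion $D(Q^{(N)})\subseteq D(Q^{(D)})$: every $u\in\ell^{2}(X,m)$ with $\QQ(u)<\infty$ is an $\aV{\cdot}_{\QQ}$-limit of finitely supported functions. As a first ingredient I would extract from recurrence, via Theorem~\ref{characterization-Soardi}, a sequence $(\varphi_{n})$ in $C_{c}(X)$ with $\|\varphi_{n}-1\|_{o}\to0$, so that $\QQ(\varphi_{n}-1)\to0$; applying the normal contraction $t\mapsto 0\vee t\wedge 1$ (which keeps finite support, does not increase $\QQ$, and preserves $\|\cdot\|_{o}$-convergence to $1$, since $s\mapsto(-1)\vee s\wedge 0$ is a normal contraction fixing $0$) I may assume $0\le\varphi_{n}\le1$.

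Next I would reduce to \emph{bounded} $u$. For $u\in D(Q^{(N)})$ put $u_{n}=(-n)\vee u\wedge n$. These converge to $u$ in $\ell^{2}(X,m)$ by dominated convergence (dominated by $|u|$), and in energy as well: truncation is a contraction, so $|(u_{n}-u)(x)-(u_{n}-u)(y)|\le 2|u(x)-u(y)|$ for all $x,y$, and the majorant is square-summable against $b$ with total $8\QQ(u)<\infty$; since the integrand tends to $0$ pointwise, dominated convergence yields $\QQ(u_{n}-u)\to0$. Hence bounded functions of finite energy are $\aV{\cdot}_{\QQ}$-dense in $D(Q^{(N)})$, and it is enough to approximate a bounded $v\in D(Q^{(N)})$ by elements of $C_{c}(X)$.

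For such bounded $v$ I would show $v\varphi_{n}\to v$ in $\aV{\cdot}_{\QQ}$; as each $v\varphi_{n}$ has finite support, this places $v$ in $D(Q^{(D)})$. Convergence in $\ell^{2}(X,m)$ follows from dominated convergence, using $|v(\varphi_{n}-1)|\le|v|\in\ell^{2}(X,m)$ and $\varphi_{n}\to1$ pointwise. For the energy I would use the discrete Leibniz identity $v(x)\varphi_{n}(x)-v(y)\varphi_{n}(y)=v(x)(\varphi_{n}(x)-\varphi_{n}(y))+\varphi_{n}(y)(v(x)-v(y))$ together with $(a+b)^{2}\le 2a^{2}+2b^{2}$ to obtain $\QQ(v(\varphi_{n}-1))\le 2\aV{v}_{\infty}^{2}\,\QQ(\varphi_{n}-1)+\sum_{x,y}b(x,y)\,|\varphi_{n}(y)-1|^{2}\,|v(x)-v(y)|^{2}$. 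The first summand tends to $0$ because $\QQ(\varphi_{n}-1)\to0$; the second tends to $0$ by dominated convergence, its terms being bounded by $b(x,y)|v(x)-v(y)|^{2}$ (as $|\varphi_{n}-1|\le1$) with total $2\QQ(v)<\infty$ and tending to $0$ pointwise. (Here the standing hypothesis $c=0$ removes any killing term from $\QQ$ and keeps the Leibniz computation clean.) Combining the two reductions gives $D(Q^{(N)})\subseteq D(Q^{(D)})$, whence $Q^{(D)}=Q^{(N)}$.

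I expect the only genuinely delicate points to be the two dominated-convergence arguments at the level of energy — the reduction to bounded $u$ and the product estimate for $v\varphi_{n}$ — where one must produce majorants that are summable against $b$; the input $\QQ(\varphi_{n}-1)\to0$ from recurrence is what drives the first summand to zero, and everything else is bookkeeping around Theorem~\ref{characterization-Soardi}.
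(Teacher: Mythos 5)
Your proof is correct, and its skeleton is the same as the paper's: both arguments extract from Theorem~\ref{characterization-Soardi} a sequence $(\varphi_n)$ in $C_c(X)$ with $\|\varphi_n-1\|_o\to 0$, normalize it so that $0\le\varphi_n\le 1$, reduce to bounded elements of $D(Q^{(N)})$ by truncation, and then use the finitely supported products $v\varphi_n$ to place a bounded $v\in D(Q^{(N)})$ into $D(Q^{(D)})$. The difference lies in the concluding step. The paper does \emph{not} show $v\varphi_n\to v$ in form norm; instead it quotes Theorem~1.4.2 of \cite{FOT} to get the uniform bound $\sqrt{Q(v\varphi_n)}\le\|v\|_\infty\sqrt{Q(\varphi_n)}+\|\varphi_n\|_\infty\sqrt{Q(v)}$, extracts a weakly convergent subsequence in the Hilbert space $\bigl(D(Q^{(D)}),\|\cdot\|_Q\bigr)$, and identifies the weak limit with $v$ through pointwise convergence. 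You instead prove strong convergence of $v\varphi_n$ to $v$ directly, via the discrete Leibniz identity and two dominated-convergence arguments (plus an explicit dominated-convergence proof of the truncation step, which the paper only asserts). Your route is more elementary and self-contained --- no product estimate from \cite{FOT}, no weak compactness --- and it yields the stronger conclusion of norm convergence of explicit $C_c(X)$-approximants; the paper's route is softer and shorter, since subsequential weak limits already suffice for the domain inclusion. Note that both arguments lean on the same auxiliary fact, namely that $\|\cdot\|_o$-convergence implies pointwise convergence on a connected graph: you use it to get $\varphi_n\to 1$ pointwise in your dominated-convergence steps, the paper uses it to identify the weak limit; this is justified in the paper (with references to \cite{Soa} and \cite{GHKLW}) in the proof of Proposition~\ref{kernel Laplacian recurrent}, so it would be worth citing explicitly rather than taking for granted.
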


\begin{proof} Assume $(b,0)$ is recurrent. Let $(\varphi_n)$ be a sequence in $C_c(X)$ converging to $1$ with respect to $\|\cdot\|_o$. By the cut-off properties of $\QQ$ this sequence can be chosen to satisfy $0 \leq \varphi_n \leq 1$. Let $f \in D(Q^{(N)})$ be bounded. Then $ \psi_n := f \cdot \varphi_n \in D(Q^{(D)})$ for each $n$. Furthermore, as $Q^{(N)}$ is a Dirichlet form, we obtain from Theorem 1.4.2. of \cite{FOT}
$$Q^{(D)}(\psi_n,\psi_n) = Q^{(N)}(\psi_n,\psi_n) \leq \|f\|_\infty \sqrt{Q^{(N)}(\varphi_n)} + \|\varphi_n\|_\infty \sqrt{Q^{(N)}(f)}. $$
This shows that $(\psi_n)$ is  bounded with respect to the form norm $\|\cdot\|_{Q}.$ Thus, $(\psi_n)$ has a weakly convergent subsequence with limit $\psi \in D(Q^{(D)})$. As $\psi_n \to f$ pointwise, we infer $f = \psi$. Every function in $D(Q^{(N)})$ can be approximated by bounded functions, hence, the claim follows.
\end{proof}

The previous proposition allows us to speak about \textit{the} Dirichlet form associated to $(b,0)$ over $(X,m)$ whenever the graph $(b,0)$ is recurrent.

\section{Diffusion determines the recurrent graph}\label{Diffusion}
In this section we show that a recurrent graph is completely determined by the  diffusion associated to it (up to an overall constant). As we will see the condition of recurrence  is necessary and the appearance of the overall constant can not be avoided.
\medskip

Recall that recurrence implies the uniqueness of the associated Dirichlet form.
\begin{theorem}\label{main_forms}
Let $(b_1,0)$ be a connected recurrent graph over $(X_1,m_1)$ and $Q_1$ the associated Dirichlet form and $L_1$ its generator.  Let $Q_2$ be a Dirichlet form associated to a graph $(b_2,0)$ over  $(X_2,m_2)$ and $L_2$ its generator. Let $U\colon \ell^2 (X_1,m_1)\to \ell^2(X_2,m_2)$ be an order isomorphism intertwining $L_1 $ and $L_2$ and  $\tau$ the associated bijection and $h$ the associated scaling. Then  there is  a constant $\beta>0$ such that
\begin{align*}
b_1(\tau(y),\tau(z))&=\beta b_2(y,z),\\
m_1(\tau(y))&=\beta m_2(y)
\end{align*}
hold for all $y,z\in X_2$.
\end{theorem}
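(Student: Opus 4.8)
The plan is to reduce everything to Theorem~\ref{Gleichung} and then to prove that the scaling $h$ is constant. Applying Theorem~\ref{Gleichung} (legitimate since $(b_1,0)$ is connected) produces a constant $\beta>0$ together with the three identities $m_1(\tau(w))=\beta h(w)^2 m_2(w)$, $b_1(\tau(x),\tau(y))=\beta h(x)h(y)b_2(x,y)$ and $c_1(\tau(z))=\beta h(z)\mathcal{L}_2 h(z)$ for all $w,x,y,z\in X_2$. Since $c_1\equiv 0$ and $\beta h(z)>0$, the last identity forces $\mathcal{L}_2 h\equiv 0$, i.e. $h$ is a strictly positive $\mathcal{L}_2$-harmonic function (this is also Corollary~\ref{h_harmonisch}). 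If I can show $h\equiv\alpha$ for some constant $\alpha>0$, then the first two identities immediately give the claim with $\widetilde\beta:=\beta\alpha^2$, since then $b_1(\tau(y),\tau(z))=\widetilde\beta\, b_2(y,z)$ and $m_1(\tau(y))=\widetilde\beta\, m_2(y)$. So the whole proof hinges on showing that the harmonic scaling $h$ is constant.

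To force $h$ to be constant I would transport the recurrence certificate of $(b_1,0)$ across $U$ and turn it into a zero-energy statement for $h$ on $(b_2,0)$. By recurrence of $(b_1,0)$ and Theorem~\ref{characterization-Soardi} there is a sequence $\varphi_n\in C_c(X_1)$ converging to $1$ with respect to $\|\cdot\|_{o}$ (for a fixed $o\in X_1$); in particular $\QQ_1(\varphi_n)\to 0$ and $\varphi_n\to 1$ pointwise. Set $g_n:=U\varphi_n=h\cdot(\varphi_n\circ\tau)\in C_c(X_2)$. On $C_c$ the forms $Q_i$ agree with the generalized forms $\QQ_i$, so the energy identity of Theorem~\ref{Gleichung} reads $\QQ_1(\varphi_n)=\beta\,\QQ_2(g_n)$, whence $\QQ_2(g_n)\to 0$; and since $\varphi_n(\tau(y))\to 1$ for every $y$, we have $g_n\to h$ pointwise. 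Applying Fatou's lemma to the energy sum $\QQ_2(g_n)=\tfrac12\sum_{x,y}b_2(x,y)|g_n(x)-g_n(y)|^2$ yields $\QQ_2(h)\le\liminf_n\QQ_2(g_n)=0$. As $(b_2,0)$ is connected (by Theorem~\ref{main_weak_form}), $\QQ_2(h)=0$ forces $h(x)=h(y)$ on every edge, hence $h$ is constant.

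The main obstacle is precisely this constancy of $h$, and I expect the subtlety to lie in how one invokes recurrence. The tempting route --- deduce recurrence of $(b_2,0)$ from stability of recurrence (Corollary~\ref{stability_recurrence}) and then kill $h$ via the superharmonic characterization (Proposition~\ref{kernel Laplacian recurrent}) --- is delicate, because $Q_2$ need only satisfy $Q^{(D)}_2\subseteq Q_2\subseteq Q^{(N)}_2$, and an intermediate (e.g.\ Neumann) form can be recurrent while the graph $(b_2,0)$ itself, i.e.\ its Dirichlet form, is transient; stability of recurrence for the semigroup of $L_2$ therefore does not by itself return recurrence of the graph. The energy/Fatou argument above avoids this entirely: it uses only the given recurrence of $(b_1,0)$ to manufacture the approximating sequence, transfers it through the map $U$ (unitary up to scaling by Corollary~\ref{U unitary}), and concludes $\QQ_2(h)=0$ directly. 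Once $h$ is constant, substituting back into the identities from Theorem~\ref{Gleichung} finishes the proof.
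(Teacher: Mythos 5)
Your proof is correct, and at the crucial step --- constancy of the scaling $h$ --- it takes a genuinely different route from the paper. The paper argues exactly along the lines you call ``tempting'': $h$ is harmonic by Corollary~\ref{h_harmonisch}, the semigroup of $L_2$ is recurrent by Corollary~\ref{stability_recurrence}, and then Proposition~\ref{kernel Laplacian recurrent} forces $h$ to be constant. Your objection to that route is substantive: Corollary~\ref{stability_recurrence} yields recurrence of the semigroup generated by $Q_2$, whereas Proposition~\ref{kernel Laplacian recurrent} concerns recurrence of the \emph{graph} $(b_2,0)$, i.e.\ of its regular form $Q_2^{(D)}$, and for an intermediate form $Q_2^{(D)}\subseteq Q_2\subseteq Q_2^{(N)}$ semigroup recurrence does not in general imply graph recurrence (for instance, on a transient graph equipped with a finite measure the constant function $1$ lies in $D(Q^{(N)})$ and is invariant, so the Neumann semigroup is recurrent while the graph is not). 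The paper's proof is nevertheless correct modulo an observation it leaves implicit: since $Q_1$ is regular (by recurrence and Proposition~\ref{regular}) and, by Theorem~\ref{Gleichung} together with Corollary~\ref{U unitary}, $U$ maps $C_c(X_1)$ onto $C_c(X_2)$, satisfies $D(Q_2)=UD(Q_1)$ and is bicontinuous for the form norms, the form $Q_2$ is itself regular, i.e.\ $Q_2=Q_2^{(D)}$; with this, recurrence of the semigroup of $L_2$ \emph{is} recurrence of the graph $(b_2,0)$ and the paper's chain closes. Your alternative --- pushing a recurrence certificate $(\varphi_n)\subset C_c(X_1)$ through $U$, using $\QQ_1(\varphi_n)=\beta\,\QQ_2(U\varphi_n)$, pointwise convergence $U\varphi_n\to h$, and Fatou to conclude $\QQ_2(h)=0$, then connectedness via Theorem~\ref{main_weak_form} --- is sound at every step and uses only the stated recurrence of $(b_1,0)$, so it is immune to the intermediate-form subtlety; in effect it is the Fatou computation from the paper's own proof of Proposition~\ref{kernel Laplacian recurrent}, run through the intertwining map $U$ (which by Theorem~\ref{theorem_gst} is a generalized ground state transform) with graph $1$'s certificate instead of graph $2$'s. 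What the paper's route buys is modularity and brevity; what yours buys is a self-contained argument that closes the gap you correctly identified.
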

\begin{proof}
By Lemma~\ref{h_harmonisch}, the function $h$ is harmonic. As $(b_2,0)$ is also recurrent by Corollary \ref{stability_recurrence},  the function $h$ is constant by Proposition~\ref{kernel Laplacian recurrent}  and the statement follows from Theorem~\ref{Gleichung}.
\end{proof}
\medskip

{\bf Remark.} The theorem is optimal in the following sense:
The condition of recurrence (and hence of  vanishing $c$) is necessary. If a graph is not recurrent then, as seen in the previous section, it admits a positive non-constant superharmonic function. As shown by the discussion in Section~\ref{Ground state section} such superharmonic functions lead to intertwined operators on different graph structures. Similarly, the appearance of the constant $\beta$ can not be avoided in the general case  as one could do a generalized ground state transform with the constant function $1$ and some normalizing constant $\beta >0$.
\medskip

From the theorem we also obtain the following characterization.

\begin{corollary}
For $i=1,2$, let $Q_i$ be a Dirichlet form associated to the graph $(b_i,0)$ over  $(X_i,m_i)$ and let $L_i$ be the corresponding Laplacian. Assume that both graphs are recurrent and connected. Then, the following assertions are equivalent:
\begin{itemize}
\item[(i)]There is an order isomorphism $U\colon \ell^2(X_1,m_1)\to \ell^2(X_2,m_2)$ intertwining $L_1$ and $L_2$.
\item[(ii)]There is a bijection $\tau\colon X_2\to X_1$ and a constant $\beta>0$ such that
\begin{align*}
b_1(\tau(y),\tau(z))&=\beta b_2(y,z)\\
m_1(\tau(y))&=\beta m_2(y)
\end{align*}
for all $y,z\in X_2$ and $L_2$ is the Dirichlet (resp. Neumann) Laplacian on the graph $(b_2,0)$ over $(X_2,m_2)$.
\end{itemize}
\end{corollary}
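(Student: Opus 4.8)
The plan is to handle the two implications separately, after first disposing of the parenthetical clause in (ii). Since both graphs are recurrent, Proposition~\ref{regular} gives $Q^{(D)}_i=Q^{(N)}_i$ for $i=1,2$; hence each $(b_i,0)$ carries a \emph{unique} associated Dirichlet form, whose generator is at once the Dirichlet and the Neumann Laplacian. In particular $Q_2$ is forced to be this unique form, so the requirement ``$L_2$ is the Dirichlet (resp.\ Neumann) Laplacian'' is automatic and carries no content. The implication (i)$\Rightarrow$(ii) is then immediate: from the intertwining order isomorphism $U$, Theorem~\ref{orderiso} extracts the associated bijection $\tau\colon X_2\to X_1$ and scaling $h$, and Theorem~\ref{main_forms} supplies a constant $\beta>0$ with $b_1(\tau(y),\tau(z))=\beta b_2(y,z)$ and $m_1(\tau(y))=\beta m_2(y)$ for all $y,z\in X_2$, which is exactly (ii).

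For (ii)$\Rightarrow$(i) I would build the order isomorphism by hand, taking the scaling to be trivial. Define $U\colon\ell^2(X_1,m_1)\to\ell^2(X_2,m_2)$ by $Uf=f\circ\tau$. The relation $m_1(\tau(y))=\beta m_2(y)$ gives $\|f\|_{m_1}^2=\beta\|Uf\|_{m_2}^2$, so $U$ is a scaled isometry, in particular bounded and injective; since $\tau$ is a bijection, $U$ maps $C_c(X_1)$ onto $C_c(X_2)$, and a scaled isometry with dense range is surjective. As $U$ and $U^{-1}=(\,\cdot\,)\circ\tau^{-1}$ obviously preserve positivity, $U$ is an order isomorphism; it remains to see that it intertwines $L_1$ and $L_2$.

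The key step is the form identity $Q_1(f,g)=\beta Q_2(Uf,Ug)$. For $f\in C_c(X_1)$ this is a direct re-indexing: writing $Q_1(f)=\tfrac12\sum_{x,x'\in X_1}b_1(x,x')|f(x)-f(x')|^2$ (here $c_1=0$), substituting $x=\tau(y),\ x'=\tau(z)$ and using $b_1(\tau(y),\tau(z))=\beta b_2(y,z)$ turns this into $\beta Q_2(Uf)$. Because both graphs are recurrent, $Q_1=Q^{(D)}_1$ is regular, so $C_c(X_1)$ is a form core; combining the form identity with the scaling relation and closing up (exactly as in Lemma~\ref{Ground state transform}) extends $UD(Q_1)=D(Q_2)$ and the identity to all of $D(Q_1)$. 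Finally, since $U$ is a scaled unitary, $U^\ast U=\beta^{-1}\mathrm{Id}$ and $U^\ast=\beta^{-1}U^{-1}$ (cf.\ Lemma~\ref{adjoint_orderiso}), so for $f\in D(L_1)$ and $g\in D(Q_2)$,
$$\as{UL_1f,g}=\beta^{-1}\as{L_1f,U^{-1}g}=\beta^{-1}Q_1(f,U^{-1}g)=\beta^{-1}\cdot\beta\,Q_2(Uf,g)=Q_2(Uf,g),$$
whence $Uf\in D(L_2)$ and $L_2Uf=UL_1f$; the symmetric argument gives $UD(L_1)=D(L_2)$, and the characterization of intertwining (Proposition~\ref{intertwining_operators}) completes the proof. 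Alternatively, one recognizes $(b_1,0)$ over $(X_1,m_1)$ as the generalized ground state transform of $(b_2,0)$ over $(X_2,m_2)$ with respect to $(h\equiv1,\tau,\beta)$---the constant $1$ being harmonic as $c_2=0$---and invokes Theorem~\ref{theorem_gst} after identifying the vertex sets through $\tau$.

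I expect the closure argument in the third step to be the only real obstacle: passing from the form identity on $C_c$ to operator intertwining on the full domains. This is precisely where recurrence is indispensable, since it is Proposition~\ref{regular} that provides uniqueness and regularity of the form, and hence that $C_c$ is a core on both sides.
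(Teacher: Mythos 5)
Your proposal is correct and takes essentially the same route as the paper: (i)$\Rightarrow$(ii) via Theorem~\ref{main_forms}, and (ii)$\Rightarrow$(i) via the generalized ground state transform with $h\equiv 1$ combined with Proposition~\ref{regular} to guarantee regularity/uniqueness of the forms. Your hands-on construction of $U=f\mapsto f\circ\tau$ simply inlines, for this special case, the proofs of Lemma~\ref{Ground state transform} and Theorem~\ref{theorem_gst}, which is exactly the alternative you note at the end and is the paper's own argument.
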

\begin{proof}
(i)$\Rightarrow$ (ii) is a consequence of the previous theorem.  (ii)$\Rightarrow$(i) follows from the discussion about the generalized ground state transform, Theorem~\ref{theorem_gst} and the fact that recurrence implies the regularity of $Q_i$, $i = 1,2$, Proposition~\ref{regular}.
\end{proof}

\textbf{Remark.} Of course, if $X$ is finite and $c$ vanishes the recurrence assumption is automatically satisfied. This can be substantially strengthened as discussed in the next section.


\section{Application to graphs with finite total  edge weight}\label{Normalized}
As noted in the previous section graphs over finite sets are determined by their diffusion. Of course, any measure on a finite set is finite and the Laplacian associated to a graph over a finite set  is bounded. It turns out that these two requirements alone already imply that the diffusion determines the graph.  In fact, these two requirements imply finiteness of the total edge weight, which in turn implies recurrence.   As a consequence  we obtain that the  diffusion determines the graph whenever the total edge weight is finite, Corollary~\ref{normalized-diffusion}.  These results can be considered as rather complete  analogues of \cite{Are,AtE} in the sense that the (relative) compactness assumption there is replaced by finiteness of the measure and boundedness of the Laplacian.
\medskip

We start with a characterization of boundedness of the Laplacian taken from \cite{HKLW}.

Recall that the \emph{generalized degree} $\Deg:X\to[0,\infty)$ of a graph $(b,c)$ over $(X,m)$ is defined by
$$\Deg (x) := \frac{1}{m(x)} \left( \sum_{y\in X} b(x,y) + c(x) \right),\quad x\in X.$$
\begin{proposition}[Characterization of boundedness \cite{HKLW}]\label{p:bounded}  Let $(b,c)$ be a graph over $(X,m)$. Then, the following assertions are equivalent:
\begin{itemize}
\item[(i)] There exists a $C>0$ such that $\Deg (x) \leq C $
holds for all $x\in X$.
\item[(ii)]  There exists a $c>0$ such that
$\QQ (f,f)\leq c \|f\|^2$
for all $f\in \ell^2 (X,m)$.
\item[(iii)] The operator  $\ow L$  restricts to a bounded operator on  $\ell^2 (X,m)$.
\item[(iv)] The operator $\ow L$ restricts to  a bounded operator on  $\ell^p (X,m)$ for all $p\in [1,\infty]$.
\end{itemize}
\end{proposition}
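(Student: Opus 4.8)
The plan is to run the chain $(i)\Rightarrow(ii)\Rightarrow(iii)\Rightarrow(i)$ and to obtain the $\ell^p$-statement separately through $(i)\Rightarrow(iv)\Rightarrow(iii)$, where $(iv)\Rightarrow(iii)$ is the trivial specialisation $p=2$. I would begin with the two pointwise implications linking $(i)$ and $(ii)$. For $(i)\Rightarrow(ii)$ the elementary bound $|f(x)-f(y)|^2\leq 2|f(x)|^2+2|f(y)|^2$ together with the symmetry of $b$ gives
\[
\QQ(f,f)\leq 2\sum_{x\in X}|f(x)|^2\sum_{y\in X}b(x,y)+\sum_{x\in X}c(x)|f(x)|^2\leq 2\sum_{x\in X}\Deg(x)\,|f(x)|^2 m(x),
\]
so that a uniform degree bound $\Deg\leq C$ yields $\QQ(f,f)\leq 2C\aV{f}^2$. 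Conversely, for $(ii)\Rightarrow(i)$ I would test the form inequality against $f=1_x$, using Green's formula (or a direct count) to get $\QQ(1_x,1_x)=\sum_{y}b(x,y)+c(x)=m(x)\Deg(x)$ together with $\aV{1_x}^2=m(x)$, which reads off $\Deg(x)\leq c$ at every vertex.

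For $(ii)\Leftrightarrow(iii)$ I would invoke the general theory of closed nonnegative forms. Once $(ii)$ holds, the displayed estimate shows $\ell^2(X,m)\subseteq\DD$, hence $D(Q^{(N)})=\ell^2(X,m)$ and $\QQ$ restricts to a bounded form on all of $\ell^2(X,m)$; its generator is then a bounded self-adjoint operator, which by the identification recalled in Section~\ref{Graphs} acts as $\ow L$. This gives $(iii)$. For $(iii)\Rightarrow(i)$ I would again test against $1_x$: since $\as{\ow L 1_x,1_x}=\Deg(x)\,m(x)=\Deg(x)\,\aV{1_x}^2$, the Cauchy--Schwarz inequality forces $\Deg(x)\leq\aV{\ow L}$ for all $x$.

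For $(i)\Rightarrow(iv)$ I would establish boundedness at the two endpoints $p=\infty$ and $p=1$ and then interpolate. The same $|f(x)|+|f(y)|$ splitting gives $|\ow L f(x)|\leq 2\Deg(x)\aV{f}_\infty\leq 2C\aV{f}_\infty$, so $\ow L$ is bounded on $\ell^\infty(X,m)$, while a Fubini/symmetry computation on $\aV{\ow L f}_1=\sum_x|\ow L f(x)|m(x)$ yields $\aV{\ow L f}_1\leq 2C\aV{f}_1$. The Riesz--Thorin interpolation theorem then delivers boundedness on every $\ell^p(X,m)$, $p\in[1,\infty]$.

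The step that needs the most care is $(ii)\Rightarrow(iii)$: one must verify that the bounded form genuinely has all of $\ell^2(X,m)$ as its form domain and that its (bounded) generator really is the formal operator $\ow L$, rather than merely some abstract bounded self-adjoint operator. This is exactly where the coincidence $L=\ow L$ on $D(L)$ recalled earlier, together with the resulting uniqueness of the form associated to the graph when the degree is bounded, enters. The remaining implications are routine pointwise estimates and a standard interpolation argument.
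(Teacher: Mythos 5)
Your proof is correct, but there is nothing in the paper to compare it against: Proposition~\ref{p:bounded} is imported from \cite{HKLW} without proof (it is even labelled ``Characterization of boundedness [HKLW]''), so the comparison can only be with that reference, and your argument is essentially the standard one given there. The cycle $(i)\Rightarrow(ii)\Rightarrow(iii)\Rightarrow(i)$ together with $(i)\Rightarrow(iv)$ and the trivial $(iv)\Rightarrow(iii)$ is logically complete, and each step checks out: the splitting $|f(x)-f(y)|^2\le 2|f(x)|^2+2|f(y)|^2$ plus symmetry of $b$ gives $(i)\Rightarrow(ii)$ with constant $2C$; testing on $1_x$, using $\QQ(1_x,1_x)=\sum_y b(x,y)+c(x)=\Deg(x)m(x)$ and $\aV{1_x}^2=m(x)$, gives both $(ii)\Rightarrow(i)$ and, via Cauchy--Schwarz, $(iii)\Rightarrow(i)$; and the endpoint bounds on $\ell^1$ and $\ell^\infty$ plus interpolation give $(i)\Rightarrow(iv)$. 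You are also right that $(ii)\Rightarrow(iii)$ is the only step needing more than a computation, and your resolution is the correct one: under $(ii)$ every $f\in\ell^2(X,m)$ has $\QQ(f)<\infty$, so $\ell^2(X,m)\subseteq\DD\subseteq\FF$, the Neumann form has domain all of $\ell^2(X,m)$ and is bounded, hence its generator is an everywhere-defined bounded self-adjoint operator, and the identification $Lu=\ow L u$ on $D(L)$ quoted from \cite[Proposition~3.3]{HKLW} shows this generator is precisely the restriction of $\ow L$. One small technical caveat: the paper works exclusively with real-valued functions, while Riesz--Thorin is a theorem about complex scalars; you should either complexify $\ow L$ first (boundedness on the real spaces then follows by restriction, at worst with a slightly larger constant) or replace interpolation by a direct Schur-type estimate, as neither affects the conclusion that $\ow L$ is bounded on every $\ell^p(X,m)$, $p\in[1,\infty]$.
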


\begin{definition} In the situation of the proposition we say that the graph $(b,c)$ over $(X,m)$ has a \emph{bounded Laplacian}.
\end{definition}

Furthermore, for a graph $(b,c)$ over $X$ recall the \emph{normalizing measure} $n:X\to(0,\infty)$ given by
$$n(x) = \sum_{y\in X} b(x,y)+c(x),\quad x\in X.$$

\textbf{Example.} (Normalized Laplacian and normalized form) A special instance of a bounded Laplacian is the \textit{normalized Laplacian}  associated to the graph $(b,0)$ over $X$. This Laplacian arises by the choice of the measure $m=n$ such that the generalized degree defined above equals one. This normalized Laplacian is heavily studied (see discussion in the introduction). We will refer to the induced Markovian semigroup as normalized diffusion on the graph $(b,0)$.
\medskip

In the case of finite $X$, the associated Laplacian is bounded and the total mass is finite. It is possible to characterize the occurrence of these two features for general countable $X$  via the \textit{total edge weight}
$$n(X)=\sum_{x,y\in X} b(x,y) + \sum_{x\in X} c(x).$$
This is the content of the next proposition.

\begin{proposition} Let $(b,c)$ be a graph over $X$. Then, the following two assertions are equivalent:
\begin{itemize}

\item[(i)] There exists a measure $m$ on $X$ with finite total mass such that the graph $(b,c)$ over $(X,m)$ has a bounded Laplacian.

\item[(ii)] The total  edge weight
$$n(X)=\sum_{x,y\in X} b(x,y) + \sum_{x\in X} c(x)$$
is finite.
\end{itemize}
\end{proposition}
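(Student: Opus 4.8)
The plan is to reduce everything to the single identity
$$\Deg(x) = \frac{1}{m(x)}\left(\sum_{y\in X} b(x,y) + c(x)\right) = \frac{n(x)}{m(x)}, \qquad x\in X,$$
which expresses the generalized degree as the pointwise quotient of the normalizing measure $n$ by the measure $m$. Recall that $n(X) = \sum_{x\in X} n(x)$, since summing $n(x)$ over $x$ recovers exactly $\sum_{x,y} b(x,y) + \sum_x c(x)$. By Proposition~\ref{p:bounded}, the graph $(b,c)$ over $(X,m)$ has a bounded Laplacian if and only if $\Deg$ is bounded, i.e.\ if and only if there is a constant $C>0$ with $n(x) \le C\, m(x)$ for all $x\in X$. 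Thus the proposition amounts to linking this pointwise domination to the finiteness of $n(X)$, and both implications become short.

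For the implication (i)$\Rightarrow$(ii), I would start from a finite measure $m$ for which the Laplacian is bounded. Proposition~\ref{p:bounded} furnishes a constant $C>0$ with $n(x)\le C\,m(x)$ for every $x\in X$. Summing this inequality over all $x\in X$ and using $n(X)=\sum_x n(x)$ gives
$$n(X) = \sum_{x\in X} n(x) \le C\sum_{x\in X} m(x) = C\, m(X) < \infty,$$
where finiteness of the last term is exactly the hypothesis that $m$ has finite total mass. This yields (ii).

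For the converse (ii)$\Rightarrow$(i), the natural and essentially only candidate is to take $m = n$ itself. Since $n\colon X\to(0,\infty)$ takes strictly positive values, $m=n$ is a measure of full support, and its total mass is $m(X)=n(X)$, which is finite by assumption. With this choice $\Deg(x) = n(x)/n(x) = 1$ for all $x$, so $\Deg$ is trivially bounded and Proposition~\ref{p:bounded} shows the Laplacian is bounded, establishing (i). The only point requiring care is the full-support requirement on $m$: this is precisely what the standing convention $n\colon X\to(0,\infty)$ guarantees, so no isolated vertex with vanishing killing can spoil the argument. I do not expect any genuine obstacle here; the entire content of the proposition is the reinterpretation of boundedness via $\Deg = n/m$ together with the observation that summing the domination $n\le C m$ transfers finiteness from $m(X)$ to $n(X)$ and back.
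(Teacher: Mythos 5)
Your proof is correct and follows essentially the same route as the paper: for (i)$\Rightarrow$(ii) the paper also sums the bound on the degree against $m$, writing $n(X)=\sum_{x}\Deg(x)m(x)\leq C\,m(X)<\infty$, which is literally your inequality $n(x)\leq C\,m(x)$ summed over $X$; and for (ii)$\Rightarrow$(i) the paper likewise takes $m=n$, whose total mass is finite and whose Laplacian is bounded since $\Deg\equiv 1$. Your additional remark on the strict positivity of $n$ matches the paper's standing convention $n\colon X\to(0,\infty)$ and does not change the argument.
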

\begin{proof} (i)$\Longrightarrow$ (ii):  Let $m$ be such a measure. By the first proposition of this section, there exists then a $C\geq 0$ with
$\Deg   \leq C$. A  short computation now  gives
$$\sum_{x,y\in X} b (x,y) + \sum_{x\in X} c(x)  =\sum_{x\in X} \Deg (x)m(x) \leq C m(X) < \infty.$$

\smallskip

(ii)$\Longrightarrow$ (i): Consider the measure $m = n$. Then, $m$ has finite total mass by (ii) and the associated Laplacian is bounded.
\end{proof}

As is certainly well-known any graph with finite total edge weight is recurrent. We include a proof for completeness reasons.

\begin{proposition} Let $(b,0)$ be a connected graph over $X$ with finite total edge weight. Then, the graph is recurrent.
\end{proposition}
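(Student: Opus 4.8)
The plan is to use the recurrence criterion from Theorem~\ref{characterization-Soardi}, which characterizes recurrence by the existence of a sequence in $C_c(X)$ converging to the constant function $1$ in the norm $\|\cdot\|_o$, equivalently with $\QQ(1,1)=0$. Since $c=0$, we have $\QQ(f,f)=\frac{1}{2}\sum_{x,y\in X}b(x,y)|f(x)-f(y)|^2$. The key observation is that finite total edge weight $n(X)=\sum_{x,y}b(x,y)<\infty$ makes the constant function $1$ directly accessible as a limit of compactly supported functions in the form norm.

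First I would fix a reference vertex $o\in X$ and enumerate $X=\{x_0,x_1,x_2,\dots\}$ with $x_0=o$. For each $n$ define $\varphi_n=1_{\{x_0,\dots,x_n\}}$, the characteristic function of the first $n+1$ vertices, which lies in $C_c(X)$. Clearly $\varphi_n(o)=1$ and $\varphi_n\to 1$ pointwise. The heart of the argument is to show $\QQ(\varphi_n,\varphi_n)\to 0$. Since the constant function $1$ satisfies $\QQ(1,1)=0$ trivially, it then follows that $\varphi_n\to 1$ in $\|\cdot\|_o$, and recurrence follows from Theorem~\ref{characterization-Soardi}.

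The main step is therefore the estimate on $\QQ(\varphi_n,\varphi_n)$. The difference $\varphi_n(x)-\varphi_n(y)$ is nonzero only when exactly one of $x,y$ lies in $\{x_0,\dots,x_n\}$, in which case it equals $\pm 1$. Hence
\begin{align*}
\QQ(\varphi_n,\varphi_n)=\frac{1}{2}\sum_{x,y\in X}b(x,y)|\varphi_n(x)-\varphi_n(y)|^2=\sum_{\substack{x\in\{x_0,\dots,x_n\}\\ y\notin\{x_0,\dots,x_n\}}}b(x,y).
\end{align*}
This is a tail of the absolutely convergent double sum $\sum_{x,y\in X}b(x,y)=n(X)<\infty$. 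I would argue that as $n\to\infty$ the indices $x,y$ appearing in this tail sum are forced to become large, so by the dominated convergence theorem (or directly by the Cauchy criterion for the convergent series indexed over pairs) the sum tends to $0$. Concretely, for any $\varepsilon>0$ finiteness of $n(X)$ gives a finite set $F\subseteq X$ with $\sum_{(x,y)\notin F\times F}b(x,y)<\varepsilon$; once $n$ is large enough that $F\subseteq\{x_0,\dots,x_n\}$, every pair in the tail sum has at least one coordinate outside $F$, so $\QQ(\varphi_n,\varphi_n)<\varepsilon$.

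The only subtlety I anticipate is making the bookkeeping of the tail sum fully rigorous: one must check that every pair $(x,y)$ with $x\in\{x_0,\dots,x_n\}$ and $y\notin\{x_0,\dots,x_n\}$ indeed has a coordinate outside the fixed finite set $F$, which holds precisely because $F\subseteq\{x_0,\dots,x_n\}$ forces $y\notin F$. With this the estimate is immediate and no connectedness assumption beyond what is already hypothesized is needed. I expect this to be the main (though mild) obstacle; everything else is a direct appeal to the stated recurrence criterion.
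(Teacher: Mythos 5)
Your proof is correct and takes essentially the same route as the paper: both invoke Theorem~\ref{characterization-Soardi} and use finiteness of the total edge weight to show that compactly supported approximations of the constant function $1$ converge to $1$ with respect to $\|\cdot\|_o$. The only cosmetic difference is that you work with the explicit exhaustion $\varphi_n=1_{\{x_0,\dots,x_n\}}$ and a hands-on tail estimate for $\QQ(\varphi_n,\varphi_n)$, whereas the paper observes that any sequence in $C_c(X)$ with $0\leq\varphi_n\leq 2$ converging pointwise to $1$ works, via dominated convergence.
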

\begin{proof} This follows easily from Theorem~\ref{characterization-Soardi}. In fact, any sequence $(\varphi_n)$ in $C_c (X)$ with $0 \leq \varphi_n \leq 2$ and $\varphi_n (x) \to 1$, $n\to \infty$,  for all $x\in X$ will converge to $1$ with respect to $\|\cdot\|_o$ due to the finiteness assumption on the total edge weight.
\end{proof}




From the previous proposition and Theorem~\ref{main_forms} we immediately obtain the following corollary.

\begin{corollary}[Diffusion determines the graph of total finite edge weight] \label{normalized-diffusion} Let $(b_1,0)$ be a connected  graph over $(X_1,m_1)$ with
$$\sum_{x,y} b_1 (x,y) < \infty.$$
  Let $Q_1$ be the associated Dirichlet form with associated Laplacian $L_1$.  Let $Q_2$ be a Dirichlet form associated to the graph  $ (b_2,0)$ over $(X_2,m_2)$ and let $L_2$ be its generator.
 Let $U: \ell^2 (X_1,m_1)\to \ell^2(X_2,m_2)$ be an order isomorphism intertwining $L_1 $ and $L_2$ with associated bijection $\tau$ and associated scaling $h$.
  Then  there is  a constant $\beta>0$ such that
\begin{align*}
b_1(\tau(y),\tau(z))&=\beta b_2(y,z),\\
m_1(\tau(y))&=\beta m_2(y)
\end{align*}
hold for all $y,z\in X_2$.
\end{corollary}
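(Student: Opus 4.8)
The plan is to recognize this statement as a direct specialization of Theorem~\ref{main_forms}, with the only gap being the verification that the recurrence hypothesis of that theorem is met. Comparing the two statements, every assumption of Theorem~\ref{main_forms} is already present in the corollary---connectedness of $(b_1,0)$, the vanishing killing term, the order isomorphism $U$ intertwining $L_1$ and $L_2$, and the associated data $\tau$ and $h$---except that Theorem~\ref{main_forms} additionally requires $(b_1,0)$ to be recurrent. Thus the entire argument reduces to supplying this single missing ingredient.

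First I would invoke the proposition just established, which asserts that any connected graph $(b,0)$ with finite total edge weight is recurrent. Since the corollary assumes $\sum_{x,y} b_1(x,y) < \infty$ and the killing term vanishes, the total edge weight of $(b_1,0)$ is finite, and hence $(b_1,0)$ is recurrent. This is the only place where the finiteness hypothesis enters the proof.

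With recurrence of $(b_1,0)$ in hand, the hypotheses of Theorem~\ref{main_forms} are satisfied verbatim. Applying that theorem yields a constant $\beta > 0$ with $b_1(\tau(y),\tau(z)) = \beta b_2(y,z)$ and $m_1(\tau(y)) = \beta m_2(y)$ for all $y,z \in X_2$, which is exactly the asserted conclusion.

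Since nothing beyond this reduction is needed, there is essentially no obstacle here: the content of the corollary resides entirely in the two cited results, and the finiteness of the total edge weight serves only to trigger recurrence. The one point that merits a word of care---should one wish to spell out the reduction fully---is that recurrence is independent of the choice of measure, as recorded in the remark following Theorem~\ref{characterization-Soardi}. This ensures that the recurrence produced by the preceding proposition is precisely the notion required as input to Theorem~\ref{main_forms}, so that the two results dovetail without any compatibility check on $m_1$.
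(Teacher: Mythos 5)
Your proof is correct and is exactly the paper's argument: the corollary is obtained by combining the preceding proposition (finite total edge weight of a connected graph implies recurrence) with Theorem~\ref{main_forms}. Your additional remark that recurrence is independent of the measure is a sensible clarification but does not change the route.
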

\medskip

 \textbf{Remark.} As discussed in the introduction to this subsection, the previous theorem generalizes the case of finite $X$ and can be seen to provide an analogue to the results of \cite{Are,AtE}.


\section{Two  special situations }\label{Discrete}
In this section, we have a look at two special  situations, where  we can say more even without a recurrence condition. These two situations are the case of constant measure and the case of a graph with standard weights and normalized Laplacian (see discussion in the introduction).
\medskip

\textbf{The case $m = 1$.} If the measure is normalized to be equal to one in each point, we obtain the following theorem.

\begin{theorem}\label{theorem-m-equal-one}
For $i=1,2$, let $Q_i$ be a Dirichlet form associated to the graph $(b_i,c_i)$ over  $(X_i,m_i)$.  Assume that $(b_1,c_1)$ is connected and $m_1\equiv1$ and $m_2 \equiv 1$. Let $U: \ell^2 (X_1,m_1)\to \ell^2(X_2,m_2)$ be an order isomorphism intertwining $L_1 $ and $L_2$ with associated bijection $\tau$.
 Then the equalities  $b_1(\tau(y),\tau(z))=b_2(y,z)$ and $c_1(\tau(y))=c_2(y)$ hold for all $y,z\in X_2$.
\end{theorem}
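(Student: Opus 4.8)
The plan is to read off everything from Theorem~\ref{Gleichung}, whose hypotheses are met here since $(b_1,c_1)$ is connected and $U$ is an intertwining order isomorphism. That theorem produces a constant $\beta>0$ together with the three relations
\begin{align*}
m_1(\tau(w)) &= \beta\, h(w)^2\, m_2(w),\\
b_1(\tau(x),\tau(y)) &= \beta\, h(x)h(y)\, b_2(x,y),\\
c_1(\tau(z)) &= \beta\, h(z)\, \mathcal{L}_2 h(z),
\end{align*}
valid for all $w,x,y,z\in X_2$. The whole point of the hypothesis $m_1\equiv 1$, $m_2\equiv 1$ is that it rigidifies the scaling $h$: substituting the two constant measures into the first relation collapses it to $1=\beta\, h(w)^2$ for every $w\in X_2$. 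Since $h$ is strictly positive, this forces $h\equiv \beta^{-1/2}$, i.e. $h$ is constant with $\beta\, h(w)^2 = 1$ for all $w$. This constancy of $h$ is the only real content of the argument, and it is the step I would single out as the crux; everything afterwards is substitution.

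Given that $h$ is constant, the edge-weight relation becomes $b_1(\tau(y),\tau(z)) = \beta\, h(y)h(z)\, b_2(y,z) = b_2(y,z)$, using $\beta\, h(y)h(z)=\beta\, h(w)^2=1$. This yields the first claimed equality directly.

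For the killing term I would exploit constancy of $h$ a second time. Since $h$ is constant, all difference terms in $\mathcal{L}_2 h$ vanish, so $\mathcal{L}_2 h(z) = \sum_{y} b_2(z,y)\bigl(h(z)-h(y)\bigr) + c_2(z)h(z) = c_2(z)\, h(z)$. Plugging this into the third relation gives $c_1(\tau(z)) = \beta\, h(z)\,\mathcal{L}_2 h(z) = \beta\, h(z)^2\, c_2(z) = c_2(z)$, again because $\beta\, h(z)^2 = 1$. This establishes $c_1(\tau(y)) = c_2(y)$ for all $y\in X_2$ and completes the proof. I do not anticipate any genuine obstacle beyond the observation that equal normalized measures leave no freedom in the scaling; the statement is essentially the specialization of Theorem~\ref{Gleichung} to the case $\beta h^2 \equiv 1$.
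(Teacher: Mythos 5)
Your proposal is correct and follows exactly the paper's own route: apply Theorem~\ref{Gleichung}, use $m_1\equiv m_2\equiv 1$ in the measure relation to force $\beta h^2\equiv 1$ (so $h$ is constant), and then substitute into the edge-weight and killing-term formulas (the paper leaves this last substitution, including the observation $\mathcal{L}_2 h = c_2 h$ for constant $h$, implicit, while you spell it out). No gaps.
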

\begin{proof}
By Theorem~\ref{Gleichung} there exists a constant $\beta > 0$ such that $$m_1( \tau(x)) = \beta h(x)^2 m_2(x).$$
Using $m_1 \equiv 1$ and $m_2 \equiv 1$ we obtain $h(x) = \frac{1}{\sqrt{\beta}}$ for all $x \in X_2$. Now the statement follows from the other formulas of Theorem~\ref{Gleichung}.
\end{proof}

\textbf{Remark.}
As the proof shows, it suffices to assume that $\tau$ is measure preserving, that is, $m_2(y)=m_1(\tau(y))$ for all $y\in X_2$.
\medskip

\textbf{The normalized Laplacian for graphs with standard weights.}
Here, we consider the situation that $(b,0)$ is a graph over $X$
with $b$ taking values in $\{0,1\}$  and the measure $n$ given by $n(x) =
\sum_{y} b(x,y)$. Thus, we consider the normalized Laplacian on a
graph with standard weights.  In this situation, the associated
Dirichlet form is bounded, Proposition~\ref{p:bounded}. In particular, it is the unique form
associated to the graph $(b,0)$ over $(X,n)$. From
Theorem~\ref{main_weak_form} we immediately infer the following
result.

\begin{theorem}\label{unweighted}
For $i = 1,2$,  let $(b_i,0)$ be a graph over $X_i$ with $b_i$ taking values in $\{0,1\}$ and define the measure $n_i$ on $X_i$ by $n_i (x) = \sum_{y\in X_i} b_i (x,y)$ and let $Q_i$ be the associated  Dirichlet form. Suppose $U : \ell^2 (X_1,n_1)\to \ell^2(X_2,n_2)$ is an order isomorphism intertwining $L_1 $ and $L_2$ with associated bijection $\tau$.
Then, the equality
$b_1(\tau(y),\tau(z))=b_2(y,z)$ holds   for all $y,z\in X_2$.
\end{theorem}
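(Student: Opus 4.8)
The plan is to read the statement off Theorem~\ref{main_weak_form}, using only that the edge weights are confined to $\{0,1\}$. First I would check that the hypotheses of Theorem~\ref{main_weak_form} are genuinely available: for each $i$ the form $Q_i$ is a Dirichlet form associated to the graph $(b_i,0)$ over $(X_i,n_i)$, and it is in fact the only one, since $\Deg_i(x)=\frac{1}{n_i(x)}\sum_y b_i(x,y)=1$ forces $L_i$ to be bounded by Proposition~\ref{p:bounded}; moreover $U$ is an order isomorphism intertwining $L_1$ and $L_2$ with associated bijection $\tau$. No connectedness is assumed in Theorem~\ref{unweighted}, but none is needed, because Theorem~\ref{main_weak_form} is itself formulated without a connectedness hypothesis.

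Applying Theorem~\ref{main_weak_form} then gives, for all $y,z\in X_2$,
$$ b_1(\tau(y),\tau(z)) > 0 \quad\Longleftrightarrow\quad b_2(y,z) > 0. $$
The remaining step is purely arithmetical. Since both $b_1$ and $b_2$ take values only in $\{0,1\}$, each of the numbers $b_1(\tau(y),\tau(z))$ and $b_2(y,z)$ equals $0$ or $1$; the displayed equivalence says that they vanish exactly together, hence they also equal $1$ exactly together. Therefore $b_1(\tau(y),\tau(z)) = b_2(y,z)$ for all $y,z\in X_2$, which is the assertion (the diagonal case $y=z$ being trivial as both diagonals vanish).

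I do not expect a genuine obstacle; the instructive point is rather what one does \emph{not} need to control. In contrast to the case $m\equiv 1$ treated in Theorem~\ref{theorem-m-equal-one}, I would make no attempt to pin down the scaling $h$ or the constant $\beta$. Indeed the identity $b_1(\tau(y),\tau(z)) = \beta h(y)h(z) b_2(y,z)$ of Theorem~\ref{Gleichung} forces $\beta h(y)h(z)=1$ on every edge of $(b_2,0)$, so here $h$ may well be non-constant and $\beta$ need not equal $1$; yet these possibly nontrivial factors are invisible to the final equality, precisely because the discreteness of the weight set collapses the positivity equivalence into equality on the nose. The only mild care required is the combinatorial bookkeeping (behaviour across connected components, paths, and the role of strict positivity of $h,\beta$) that has already been carried out inside the proof of Theorem~\ref{main_weak_form}, which is exactly why citing that theorem is the economical route.
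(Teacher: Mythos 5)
Your proposal is correct and follows exactly the paper's own route: the paper likewise deduces the equality directly from Theorem~\ref{main_weak_form} using that both weights take values in $\{0,1\}$, with the boundedness/uniqueness of the form (via $\Deg\equiv 1$ and Proposition~\ref{p:bounded}) noted in the paragraph preceding the theorem, just as you verify. Your additional remarks on why $h$ and $\beta$ need not be controlled are accurate but not needed beyond what the paper does.
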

\begin{proof} As both $b_1$ and $b_2$ take values only in $\{0,1\}$ the equality $$b_1(\tau(y),\tau(z))=b_2(y,z),$$
 for $y,z\in X_2$  follows directly from Theorem~\ref{main_weak_form}.
\end{proof}

\textbf{Remark.} In the situation of the previous Theorem it is even
possible to include non-vanishing $c$'s in the graphs. One just has
to convince oneself that one still has unique forms associated to
the arising graphs $(b_i,c_i)$  due to the boundedness of the forms
induced by $(b_i,0)$, $i=1,2$.

\section{Remark on the case of discrete time}\label{Remark-discrete-time}
Instead of studying intertwining of continuous time semigroups which
describe the time evolution of a continuous time Markov chain we can
also consider their discrete time counterpart. The raised question
then becomes: 'Do discrete time Markov chains determine the graph?'
This is briefly discussed in this section.

\bigskip

For a graph $(b,0)$ over the vertex set $X$, we consider the {normalizing measure} $n : X \to (0,\infty)$ given by
$$n(x)  = \sum_{y \in X} b(x,y) $$
and the \emph{Markov operator} $P: \mathcal{F} \to C(X)$ by setting
$$Pf(x) = \frac{1}{n(x)} \sum_{y \in V} b(x,y) f(y).$$
The choice of the measure $n$ ensures that $P$ is a bounded self-adjoint operator when restricted to $\ell^2(X,n).$ The powers of the Markov operator form a discrete time semigroup which describes the time evolution of the Markov chain $(X_n)_{n\geq 1}$ with transition probabilities
$$\mathbb{P}(X_n = x \, | \, X_{n-1}  = y) = \frac{b(x,y)}{n(x)}.$$
Note that all powers of two Markov operators are intertwined by an order isomorphism if and only if the Markov operators itself are intertwined. Thus, the analogue of Theorem \ref{Gleichung} in discrete time reads as follows.

\begin{theorem}\label{Gleichung_discrete}
 For $i = 1,2$,  let $(b_i,0)$ be a graph over $X_i$ with corresponding Markov operator $P_i$ and let $(b_1,0)$ be connected. Furthermore, let $U:\ell^2(X_1,n_1) \to \ell^2(X_2,n_2)$ be an order isomorphism with associated bijection $\tau$ and scaling $h$, such that
 $U P_1 = P_2 U. $
 Then there exists a constant $\beta >0$, such that the equalities
\begin{align*}
n_1(\tau(w)) &= \beta h(w)^2 n_2(w),\\
b_1(\tau(x),\tau(y)) &= \beta h(x) h(y) b_2(x,y),\\
P_2h(z) &= h(z),
\end{align*}
hold for all $w,x,y,z \in X_2$.
\end{theorem}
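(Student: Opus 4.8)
The plan is to reduce the statement to the continuous-time result Theorem~\ref{Gleichung}, exploiting that the Markov operator is an affine function of the normalized Laplacian. First I would record the relevant structural facts attached to the choice of the normalizing measure $n_i$. Since $c_i\equiv 0$ and $n_i(x)=\sum_{y}b_i(x,y)$, the generalized degree $\Deg_i$ is identically $1$; hence by Proposition~\ref{p:bounded} the operator $\widetilde{L}_i=\frac{1}{n_i}\mathcal{L}_i$ restricts to a bounded, non-negative, self-adjoint operator $L_i$ on $\ell^2(X_i,n_i)$, and boundedness also guarantees that $(b_i,0)$ over $(X_i,n_i)$ carries a unique associated Dirichlet form $Q_i$ with generator $L_i$. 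A direct computation from the definition of $P_i$ gives $\mathcal{L}_i f = n_i\,(f-P_i f)$, so that $L_i = I - P_i$ as bounded operators on $\ell^2(X_i,n_i)$.

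With this identity the intertwining hypothesis transfers at once: from $UP_1=P_2U$ and $L_i=I-P_i$ one gets $UL_1=L_2U$ on all of $\ell^2(X_1,n_1)$. As $L_1$ and $L_2$ are bounded their domains are the full spaces and $U$ is a bijection between them, so $UD(L_1)=D(L_2)$ holds trivially and $U$ intertwines $L_1$ and $L_2$ in the sense of Definition~\ref{def_intertwining_operators}. All hypotheses of Theorem~\ref{Gleichung} are then satisfied, with $m_i=n_i$, $c_i\equiv 0$, and $(b_1,0)$ connected by assumption.

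Applying Theorem~\ref{Gleichung} produces the constant $\beta>0$ together with the first two asserted equalities $n_1(\tau(w))=\beta h(w)^2 n_2(w)$ and $b_1(\tau(x),\tau(y))=\beta h(x)h(y)b_2(x,y)$. The third equality furnished by that theorem is $c_1(\tau(z))=\beta h(z)\mathcal{L}_2 h(z)$; since $c_1\equiv 0$ and both $\beta$ and $h$ are strictly positive, this forces $\mathcal{L}_2 h\equiv 0$, i.e.\ $h$ is harmonic (equivalently, one may invoke Corollary~\ref{h_harmonisch}, whose harmonicity clause applies precisely because $c_1\equiv 0$). To convert harmonicity into the claimed fixed-point relation I would use $L_2=I-P_2$ once more, computing $P_2 h = h - L_2 h = h - \frac{1}{n_2}\mathcal{L}_2 h = h$, which gives $P_2 h(z)=h(z)$ for all $z\in X_2$ and completes the proof.

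The argument is essentially a translation, so there is no deep obstacle. The only points that require genuine care are the verification that the normalized Laplacian is bounded with a \emph{unique} associated Dirichlet form, so that Theorem~\ref{Gleichung} is applicable and its generator is literally $I-P_i$, and the clean passage between intertwining of the Markov operators and intertwining of their generators; both hinge on boundedness ensuring that all operators in question are defined on the whole of $\ell^2$.
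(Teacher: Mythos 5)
Your proposal is correct and follows essentially the same route as the paper: identify the generator of the (unique, bounded) Dirichlet form for $(b_i,0)$ over $(X_i,n_i)$ as $L_i = I - P_i$, transfer the intertwining $UP_1 = P_2U$ to $UL_1 = L_2U$, invoke Theorem~\ref{Gleichung}, and use $c_1 \equiv 0$ together with the pointwise identity $\mathcal{L}_2 h = n_2(h - P_2h)$ to convert harmonicity of $h$ into $P_2h = h$. The extra care you take (uniqueness of the associated form, triviality of the domain condition for bounded operators) is exactly what makes Theorem~\ref{Gleichung} applicable and is implicit in the paper's shorter argument.
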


\begin{proof}
Let $Q^{(D)}_i$ be the regular Dirichlet form associated with the graph $(b_i,0)$ over $(X_i,n_i)$. Then, the associated operator $L_i$ is given by
$$L_i f(x)  =  \frac{1}{n_i(x)} \sum_{y \in X_i} b_i(x,y) (f(x) - f(y)) = f(x) - P_if(x).$$
This shows that the  $L_i$ are bounded and satisfy $UL_1 = L_2 U$. Thus, the statement follows from Theorem~\ref{Gleichung} and the observation that $P_2 h = h$ is equivalent to $\mathcal{L}_2 h = 0$.
\end{proof}

As in the continuous time setting we can strengthen this theorem for recurrent graphs.

\begin{theorem}
 For $i = 1,2$,  let $(b_i,0)$ be a graph over $X_i$ with corresponding Markov operator $P_i$ and let $(b_1,0)$ be recurrent and connected. Furthermore, let $U:\ell^2(X_1,n_1) \to \ell^2(X_2,n_2)$ be an order isomorphism with associated bijection $\tau$ and scaling $h$, such that
 $U P_1 = P_2 U. $
 Then there exists a constant $\beta >0$, such that the equalities
\begin{align*}
n_1(\tau(w)) &= \beta n_2(w),\\
b_1(\tau(x),\tau(y)) &= \beta b_2(x,y),
\end{align*}
hold for all $w,x,y \in X_2$.
\end{theorem}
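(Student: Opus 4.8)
The plan is to reduce this statement to the previous theorem, Theorem~\ref{Gleichung_discrete}, by showing that the scaling $h$ must be constant; this is the exact discrete-time analogue of how Theorem~\ref{main_forms} strengthens Theorem~\ref{Gleichung}. Once we know $h \equiv h_0$ for some constant $h_0 > 0$, the three identities of Theorem~\ref{Gleichung_discrete} collapse to $n_1(\tau(w)) = \beta h_0^2 n_2(w)$ and $b_1(\tau(x),\tau(y)) = \beta h_0^2 b_2(x,y)$, and the two claimed equalities follow after absorbing $h_0^2$ into $\beta$.

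To force $h$ to be constant I would invoke recurrence of $(b_2,0)$ together with the fact that $h$ is a strictly positive harmonic function. The relation $P_2 h = h$ supplied by Theorem~\ref{Gleichung_discrete} says exactly that $\mathcal{L}_2 h = 0$, so $h$ is harmonic and in particular superharmonic; moreover $h$ is strictly positive, being the scaling of an order isomorphism (Theorem~\ref{orderiso}). Thus, as soon as $(b_2,0)$ is known to be connected and recurrent, Proposition~\ref{kernel Laplacian recurrent} yields that $h$ is constant.

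The key step, and the only place where the discrete-time setting needs an extra argument, is transferring recurrence from $(b_1,0)$ to $(b_2,0)$. Here I would exploit that the generators $L_i = \mathrm{Id} - P_i$ are bounded self-adjoint operators on $\ell^2(X_i,n_i)$. From $U P_1 = P_2 U$ one immediately gets $U L_1 = L_2 U$, and, $U$ being a bounded operator, the relations $U L_1^k = L_2^k U$ propagate through the norm-convergent exponential series to give $U e^{-tL_1} = e^{-tL_2} U$ for all $t \geq 0$. Hence $U$ intertwines $L_1$ and $L_2$ in the sense of Definition~\ref{def_intertwining_operators}, so Corollary~\ref{stability_recurrence} shows that the continuous-time semigroup of $(b_2,0)$ is recurrent, i.e. $(b_2,0)$ is recurrent; connectedness of $(b_2,0)$ follows from Theorem~\ref{main_weak_form}. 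Since recurrence is independent of the measure (Remark after Theorem~\ref{characterization-Soardi}), this is exactly what is needed.

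With $h$ constant in hand, the proof closes by substituting into Theorem~\ref{Gleichung_discrete} as described above. The only genuine obstacle is the recurrence transfer, and it is resolved precisely by the boundedness of the normalized Laplacians $L_i = \mathrm{Id} - P_i$, which lets one identify the discrete-time intertwining $U P_1 = P_2 U$ with the continuous-time intertwining of the associated diffusion semigroups and thereby apply the machinery of Section~\ref{Recurrent}.
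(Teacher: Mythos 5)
Your proof is correct and follows essentially the same route as the paper: harmonicity of $h$ from $P_2h=h$, transfer of recurrence to $(b_2,0)$ by passing from $UP_1=P_2U$ to intertwining of the continuous-time semigroups (possible since $L_i=\mathrm{Id}-P_i$ is bounded) and invoking Corollary~\ref{stability_recurrence}, then constancy of $h$ via Proposition~\ref{kernel Laplacian recurrent} and substitution into Theorem~\ref{Gleichung_discrete}. Your explicit exponential-series argument and the remark on connectedness of $(b_2,0)$ via Theorem~\ref{main_weak_form} merely spell out steps the paper handles by citation, so there is no substantive difference.
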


\begin{proof}
As $P_2 h = h$ if and only if $\mathcal{L}_2h = 0$,  the function $h$ is harmonic by Theorem~\ref{Gleichung_discrete}. The proof of Theorem \ref{Gleichung_discrete} shows that $UP_1 = P_2 U$ implies intertwining of the continuous time semigroups associated with $(b_i,0)$ over $(X_i,n_i), i=1,2$.  Therefore, Corollary \ref{stability_recurrence} shows that $(b_2,0)$ is also recurrent. Thus, the function $h$ is constant by Proposition~\ref{kernel Laplacian recurrent} and the statement follows from Theorem~\ref{Gleichung_discrete}.
\end{proof}

{\bf Remark.} In Section \ref{Recurrent} we defined recurrence via properties of continuous time Markovian semigroups and then observed that this definition is independent of the underlying measure $m$. However, this may seem artificial in the discrete time setting where one usually uses a different definition for recurrence. Namely, one requires the sum
$$\sum_{n\geq 1} P^n 1_x (y)$$
to diverge for all $x,y \in X$. It is well known that the divergence of this series can be characterized by positive superharmonic functions being constant (see e.g. \cite{Woe}). Thus, Proposition \ref{kernel Laplacian recurrent} shows that these two notions of recurrence agree.

\begin{appendix}

\section{A proposition on intertwining}
In this section we discuss  a characterization on intertwining. Such
a characterization is well-known in many contexts. It is certainly
also known in our context. As we could not find it in the
literature, we include a discussion.

\begin{proposition}\label{intertwining_operators} Let $B_1,B_2$ be Banach  spaces and  let $S_1(t)$, $t\geq 0$,  and $S_2 (t)$, $t\geq 0$,  strongly continuous semigroups with generators $-L_1$ and $-L_2$ respectively.   Let  $U\in\LL(B_1,B_2)$ invertible. Then, the following assertions are equivalent:
\begin{itemize}
\item[(i)]$U S_1 (t) =S_2 (t) U$ for all $t\geq 0$.
\item[(ii)]$UD(L_1)=D(L_2)$ and $UL_1 f=L_2 Uf$ for all $f\in D(L_1)$.
\end{itemize}
\end{proposition}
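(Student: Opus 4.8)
The plan is to prove both implications directly from the definition of the generator of a strongly continuous semigroup, using only that $U$ is bounded and invertible with bounded inverse.

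For (i) $\Rightarrow$ (ii), I would start from $U S_1(t) = S_2(t) U$ and take any $f \in D(L_1)$. Rewriting the difference quotient via the intertwining relation gives
\[ \frac{1}{t}\bigl(S_2(t) U f - U f\bigr) = U\,\frac{1}{t}\bigl(S_1(t) f - f\bigr). \]
Since $U$ is continuous, letting $t \downarrow 0$ shows the left-hand limit exists and equals $-U L_1 f$; hence $U f \in D(L_2)$ and $L_2 U f = U L_1 f$. This yields $U D(L_1) \subseteq D(L_2)$ together with the intertwining of generators. For the reverse inclusion I would use invertibility: multiplying $U S_1(t) = S_2(t) U$ by $U^{-1}$ on both sides gives $S_1(t) U^{-1} = U^{-1} S_2(t)$, so $U^{-1}$ intertwines $S_2$ and $S_1$. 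Applying the argument just made to $U^{-1}$ yields $U^{-1} D(L_2) \subseteq D(L_1)$, i.e. $D(L_2) \subseteq U D(L_1)$, and therefore $U D(L_1) = D(L_2)$.

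For (ii) $\Rightarrow$ (i), the plan is the standard auxiliary-function argument. Fix $s > 0$ and $f \in D(L_1)$ and set
\[ g(t) = S_2(s - t)\, U\, S_1(t) f, \qquad t \in [0, s]. \]
Because $f \in D(L_1)$, the orbit $t \mapsto S_1(t) f$ is continuously differentiable with values in $D(L_1)$ and derivative $-S_1(t) L_1 f$, and the hypothesis $U D(L_1) = D(L_2)$ guarantees $U S_1(t) f \in D(L_2)$, so $r \mapsto S_2(r) U S_1(t) f$ is differentiable as well. Differentiating $g$ by the product (chain) rule I expect to obtain
\[ g'(t) = L_2 S_2(s-t) U S_1(t) f - S_2(s-t)\, U L_1 S_1(t) f. \]
Substituting the intertwining of generators $U L_1 S_1(t) f = L_2 U S_1(t) f$ and then using that $S_2(s-t)$ commutes with $L_2$ on $D(L_2)$, the two terms cancel, so $g' \equiv 0$. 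Evaluating the constant $g$ at the endpoints gives $S_2(s) U f = g(0) = g(s) = U S_1(s) f$ for every $f \in D(L_1)$. Since $D(L_1)$ is dense in $B_1$ and both $S_2(s) U$ and $U S_1(s)$ are bounded, this extends to all of $B_1$, which is exactly (i).

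The main obstacle I anticipate is the differentiation step in (ii) $\Rightarrow$ (i): one must carefully justify the product rule for $g$ and verify that each vector appearing lies in the domain where the semigroup differentiation formulas and the commutation $L_2 S_2(s-t) = S_2(s-t) L_2$ on $D(L_2)$ are valid. Everything else — the difference-quotient computation, passing the limit through the bounded operator $U$, the use of invertibility for the reverse domain inclusion, and the final density argument — is routine.
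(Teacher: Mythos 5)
Your proof is correct, but the second half takes a genuinely different route from the paper. In the direction (i)$\Rightarrow$(ii) you do essentially what the paper does: write the difference quotient for $S_2(t)Uf$, pull the bounded operator $U$ through the limit, and use invertibility (applying the same argument to $U^{-1}$, which intertwines $S_2$ and $S_1$) to get the reverse domain inclusion; the paper phrases this as a single chain of set equalities for $D(L_2)$, but the content is identical. For (ii)$\Rightarrow$(i), however, the paper argues differently: it defines $T_t := U S_1(t) U^{-1}$, checks that $(T_t)_{t\geq 0}$ is a strongly continuous semigroup on $B_2$, computes its generator via the same difference-quotient manipulation and identifies it with $-L_2$ using hypothesis (ii), and then concludes $T_t = S_2(t)$ by invoking the standard fact that a $C_0$-semigroup is uniquely determined by its generator. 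You instead run the classical auxiliary-function argument: for $f\in D(L_1)$ you show $t\mapsto S_2(s-t)US_1(t)f$ has vanishing derivative on $[0,s]$, hence is constant, and finish by density of $D(L_1)$. The trade-off is clear: the paper's conjugation argument is shorter and recycles the computation from the first implication, but it outsources the real work to the uniqueness theorem for semigroups; your argument is more self-contained --- it essentially reproves that uniqueness statement inline --- at the cost of the product-rule justification, which you correctly identify as the delicate point. That justification does go through: splitting the increment of $g(t)=S_2(s-t)h(t)$, $h(t)=US_1(t)f$, into $S_2(s-t-\delta)\bigl(h(t+\delta)-h(t)\bigr)$ plus $\bigl(S_2(s-t-\delta)-S_2(s-t)\bigr)h(t)$, the first term converges by local boundedness of the semigroup and differentiability of $h$, and the second requires exactly $h(t)\in D(L_2)$, which your hypothesis $UD(L_1)=D(L_2)$ supplies. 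Both proofs are valid.
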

\begin{proof} We write $e^{-t L_i}$ for $S_i (t)$, $i =1,2$.

\smallskip

(i)$\implies$(ii): The operator $L_i$ is the generator of the semigroup $(e^{-tL_i})_{t\geq 0}$, $i\in\{1,2\}$, hence
\begin{align*}
D(L_i)&=\left\lbrace f\in B_i\,:\,\lim_{t\downarrow 0}\frac 1 t(f-e^{-tL_i}f)\text{ exists}\right\rbrace,\\
L_if&=\lim_{t\downarrow 0}\frac 1 t(f-e^{-tL_i}f).
\end{align*}
Thus, we have (since $U$ is a homeomorphism)
\begin{align*}
D(L_2)&=\left\lbrace f\in B_2\,:\,\lim_{t\downarrow 0}\frac 1 t(f-Ue^{-tL_1}U^{-1}f)\text{ exists}\right\rbrace\\
&=\left\lbrace f\in B_2\,:\,\lim_{t\downarrow 0}U\frac 1 t (U^{-1}f-e^{-tL_1}U^{-1}f)\text{ exists}\right\rbrace\\
&=\left\lbrace f\in B_2\,:\,\lim_{t\downarrow 0}\frac 1 t(U^{-1}f-e^{-tL_1}U^{-1}f)\text{ exists}\right\rbrace\\
&=UD(L_1)
\intertext{and for all $f\in B_1$}
L_2Uf&=\lim_{t\downarrow 0}\frac 1 t(Uf-e^{-tL_2}Uf)\\
&=U\lim_{t\downarrow 0}\frac 1 t(f-e^{-tL_1}f)\\
&=UL_1 f.
\end{align*}

(ii)$\implies$(i): Let be $S_t=Ue^{-t L_1}U^{-1}$. Then $(S_t)_{t\geq 0}$ is a strongly continuous semigroup on $B_2$
\begin{align*}
S_{t+s}&=Ue^{-(t+s)L_1}U^{-1}=Ue^{-t L_1}U^{-1}Ue^{-sL_1}U^{-1}=S_t S_s\\
\lim_{t\to 0}Ue^{-tL_1}U^{-1}f&=U\lim_{t\to 0}e^{-tL_1}U^{-1}f=UU^{-1}f=f
\end{align*}
for all $s,t\geq 0,\,f\in B_2$.
The generator $L$ of $(S_t)_{t\geq 0}$ is given by
\begin{align*}
D(L)&=\{f\in B_2 \, : \, \lim_{t\to 0}\frac 1 t(S_t f-f)\text{ exists}\}\\
&=\{f\in B_2 \, : \, \lim_{t\to 0}\frac 1 t(e^{-tL_1}U^{-1}f-U^{-1}f)\text{ exists}\}\\
&=\{f\in B_2 \, : \, U^{-1}f\in D(L_1)\}\\
&=U D(L_1)\\
&=D(L_2)
\end{align*}
and for all $f\in D(L)=D(L_2)$
\begin{align*}   
Lf&=\lim_{t\to 0}\frac 1 t(Ue^{-tL_1}U^{-1}f-f)\\
&=U\lim_{t\to 0}\frac 1 t(e^{-tL_1}U^{-1}f-U^{-1}f)\\
&=UL_1U^{-1}f\\
&=L_2f.
\end{align*}
Since the generator determines the semigroup, we have $Ue^{-t L_1}=e^{-t L_2}U$ for all $t\geq 0$.
\end{proof}

\end{appendix}

\end{document}